% ---------------------------------------------------------------------------- %
% Local setup
% ---------------------------------------------------------------------------- %

\documentclass[english, 11pt]{article}
\linespread{1.2}
\usepackage{import}

\usepackage[utf8]{inputenc}                                                      % encoding: utf-8 (nordic letters)
%\usepackage[T1]{fontenc}                                                         % use 8-bit encoded fonts

% Uncomment to add more whitespace between paragraphs rather than indent
%\usepackage[parfill]{parskip}

\usepackage{csquotes}

% ---------------------------------------------------------------------------- %
% Colour
% ---------------------------------------------------------------------------- %
\usepackage{xcolor}
\usepackage{colortbl}                                                            % color presets working together with xcolor

\definecolor{cGray}{rgb}{0.45,0.45,0.45}
\definecolor{cRed}{rgb}{0.545,0.137,0}
\definecolor{cBlue1}{rgb}{0.13,0.21,1}
\definecolor{cBlue2}{rgb}{0,0.408,0.545}
\definecolor{cCyan}{rgb}{0,0.545,0.545}
\definecolor{cOrange1}{rgb}{1,0.666667,0.13726} % Day[9]tv orange
\definecolor{cOrange2}{rgb}{1.0,0.502,0}
\definecolor{cGreen}{rgb}{0.1,0.65,0.1}
\definecolor{cBlack}{rgb}{0.0,0.0,0.0}

% ---------------------------------------------------------------------------- %
% Floats: Tables and figures
% ---------------------------------------------------------------------------- % 
\usepackage{float, tabularx, booktabs}                                  % various basic stuff for tables and more
\usepackage{caption, subcaption}                                                 % captions and sub figures

\usepackage{wrapfig}                                                             % figures wrapped by text

\usepackage{rotating}                                                            % rotate content (i.e. figures sideways)

% ---------------------------------------------------------------------------- %
% Mathematics
% ---------------------------------------------------------------------------- %
% various basic stuff
\usepackage{mathtools, amsmath, amssymb, bbm}
\usepackage{stmaryrd}                                                            % even more symbols
\usepackage{amsthm}

\newcommand{\R}{\ensuremath{\mathbb{R}}}                                         % \R = Real numbers
\newcommand{\N}{\ensuremath{\mathbb{N}}}                                         % \N = Natural numbers
                                         % \Z = Integers
                                         % \C = Complex numbers
                                         % \Q = Rational numbers
                                         % \F = Field
                                         % \K = Field \R and \C
\renewcommand{\S}{\ensuremath{\mathbb{S}}}                                       % \S = Group of permutations

\newcommand{\norm}[1]{\left\lVert #1 \right\rVert}                               % \norm{arg}            norm of a value
                               % \ceil{arg}            ceiling of a value
                            % \floor{arg}           floor of a value
                                          % \inprod{v}{w}         inner product
                                            % \powgroup{arg}        image of f={g^n | n in Z}

 \makeatother

% ---------------------------------------------------------------------------- %
% Advanced macros:
% ---------------------------------------------------------------------------- %
\usepackage{xparse}                                                              % Scanning arguments
\usepackage{xifthen}                                                             % Conditionals
\usepackage{xstring}                                                             % String functions
\usepackage{calc}                                                                % Calculations

\newcounter{i}

\DeclareDocumentCommand \set { m g g }{                                          % \set{X}{C}{|}
     \left\lbrace                                                                % {X | C}
         #1 \IfValueT {#2} { \ \IfValueTF{#3}{#3}{|}\  #2 }
     \right\rbrace
}

\DeclareDocumentCommand \seq { g g g g } {                                       % \seq{x}{i}{j}{s}
    \setcounter{i}{0}                                                            % x_i, x_i+s, ... x_j
    \IfValueT {#2} { \addtocounter{i}{#2} }
    \IfValueTF {#1}
        {#1}
        {x}
    _{ \arabic{i} },
    \IfValueTF {#4} 
        {\addtocounter{i}{#4}}
        {\addtocounter{i}{1}}
    \IfValueTF {#1} 
        {#1}
        {x} 
    _{ \arabic{i} },
    \dots
    \IfValueTF {#3}
        { , #1_{#3} }
        {}
}

\DeclareDocumentCommand \eqclass { g g }{                                        % \eqclass{v}{V}    Equivalent Class
    \left[                                                                       % This can also be used for coordinate vectors
        \IfValueTF{#1}
            {#1}
            {\dot}
    \right]
    \IfValueTF{#2}
            {_{#2}}
            {}
}

\DeclareDocumentCommand \ero { g g } {                                           % \ero {x, y}
    \begin{array}{c}                                                             %    x
        \IfValueTF{#1}                                                           %    ~
            {_{#1}}                                                              %    y
            {\phantom{\sim}}
    \\
        \sim
    \\
        \IfValueTF{#2}
            {^{#2}}
            {\phantom{\sim}}
    \end{array}
}

\DeclareDocumentCommand \matrep { g g g } {                                      % \matrep{W}{L}{V}    Matrixrepresentation
    {_{                                                                          % W[L]V
        \IfValueTF {#1}                                                          %   No arguments for W or V results in standard basis
            {#1}                                                                 %   No arguments for L results in the coordinate transformation
            {\epsilon}
    }}
    \left[
        \IfValueTF {#2}
            {#2}
            {\square}
    \right] {_{
        \IfValueTF {#3}
            {#3}
            {\epsilon}
    }}
}

\newcommand{\IndexedFunc}[3]{{#1}_{#2} \left( #3 \right)}

\DeclareDocumentCommand \Geo { g g }{                                            % \Geo{v}{V}    Geogrebic Multiplicity
    \IndexedFunc
        {\text{Geo}}
        {\IfValueTF{#1}
                {#1}
                {L}}
        {\IfValueTF{#2}
            {#2}
            {\lambda}}
}

\DeclareDocumentCommand \Alg { g g }{                                            % \Alg            Algebraic Multiplicity
    \IndexedFunc
        {\text{Alg}}
        {\IfValueTF{#1}
                {#1}
                {L}}
        {\IfValueTF{#2}
            {#2}
            {\lambda}}
}

\DeclareDocumentCommand \series { g g g g }{                                     % \series{a}{b}{c}{d}    \{ a_b \} _{b = c} ^d
    \set{\IfValueTF{#1}                                                          %    An infinite series with elements a, indexed by b
      {#1}                                                                       %    starting from c ending at d
      {a}
     _{\IfValueTF{#2}
      {#2}
      {n}}      
      }
      _{\IfValueTF{#2}
        {#2}
        {n}
       =
       \IfValueTF{#3}
        {#3}
        {1}
      }
      ^{\IfValueTF{#4}
        {#4}
        {\infty}
      }
}

\DeclareDocumentCommand \pseries { g g g g }{                                    % \pseries{a}{b}{c}{d}   \{ x^k \} _{k = c} ^d
    \set{\IfValueTF{#1}                                                          %    An infinite series of points x, indexed by k
      {#1}                                                                       %    starting from c ending at d
      {x}
     ^{\IfValueTF{#2}
      {#2}
      {k}}      
      }
      _{\IfValueTF{#2}
        {#2}
        {k}
       =
       \IfValueTF{#3}
        {#3}
        {1}
      }
      ^{\IfValueTF{#4}
        {#4}
        {\infty}
      }
}

\DeclareDocumentCommand \infseq { g g g g }{                                     % \infseq{a}{b}{c}{d}   \sum_{ b = c }^d a_b
  \sum                                                                           %    An infinite sequence with elements a, indexed by b
    _{                                                                           %    starting from c ending at d
        \IfValueTF{#2}
          {#2}
          {n}
         =
         \IfValueTF{#3}
          {#3}
          {1}
    }
      ^{\IfValueTF{#4}
        {#4}
        {\infty}
      }
    \IfValueTF{#1}
      {#1}
      {a}
   _{\IfValueTF{#2}
      {#2}
      {n}
  }     
}

                                           % Larger vertical bar for divisors
                              % div(arg)

% adds vertical lines to matrices
\makeatletter
\renewcommand*\env@matrix[1][*\c@MaxMatrixCols c]{
  \hskip -\arraycolsep
  \let\@ifnextchar\new@ifnextchar
  \array{#1}}
\makeatother

% ---------------------------------------------------------------------------- %
% Logic and proofs
% ---------------------------------------------------------------------------- %
% Proofs
%\usepackage{amsthm}                                                              % Theorem package
\theoremstyle{definition}                                                        % Style: plain, definition, remark

% Uncomment if you want to have the number first
%\swapnumbers

% Logic packages
%\usepackage{lplfitch}                                                            % fitch style proofs
\usepackage{logicproof}                                                         % fitch style proofs

\usepackage{bussproofs}                                                          % proof trees
% boxed proof tree
  {\leavevmode\hbox\bgroup}
  {\DisplayProof\egroup}

% ---------------------------------------------------------------------------- %
% Tikz
% ---------------------------------------------------------------------------- %
\usepackage{tikz}                                                                % import basepackage

% Graphics
\usetikzlibrary{calc}                                                            % Coordinate calcuations
\usetikzlibrary{positioning}                                                     % Relative positioning
\usetikzlibrary{shapes}                                                          % Basic shapes to draw with
\usetikzlibrary{arrows}                                                          % Edge arrowhead customization
\usetikzlibrary{decorations.pathmorphing}                                        % Edge path customization
\usetikzlibrary{patterns}                                                        % Patterns in drawings
\usetikzlibrary{backgrounds}                                                     % Background and borders

% Macros and shortcuts
\usetikzlibrary{automata}

% Plots
\usepackage{graphicx}                                                            % import basepackage for graphs
\usepackage{pgfplots}                                                            % import pgfplots
\usepgfplotslibrary{fillbetween}                                                 % add fillBetween command
\pgfplotsset{compat=1.15}

% qtree
\usepackage[noload]{qtree}
\usepackage{pict2e}

% ---------------------------------------------------------------------------- %
% Code: lstlisting
% ---------------------------------------------------------------------------- %
\usepackage{listings}

% Centered (float-like) listings (code from: https://tex.stackexchange.com/a/245750)
\ExplSyntaxOn
\tl_new:N \l_listings_boxed_options_tl
\keys_define:nn { listings/boxed }
 {
  caption .tl_set:N = \l_listings_boxed_caption_tl,
  shortcaption .tl_set:N = \l_listings_boxed_shortcaption_tl,
  label .tl_set:N = \l_listings_boxed_label_tl,
  unknown .code:n =
          \tl_put_right:NV \l_listings_boxed_options_tl \l_keys_key_tl
          \tl_put_right:Nn \l_listings_boxed_options_tl { = #1 , },
 }
\box_new:N \l_listings_boxed_box

\lstnewenvironment{blstlisting}[1][]
 {
  \keys_set:nn { listings/boxed } { #1 }
  \exp_args:NV \lstset \l_listings_boxed_options_tl
  \hbox_set:Nw \l_listings_boxed_box
 }
 {
  \hbox_set_end:
  \cs_set_eq:cc {c@figure} {c@lstlisting}
  \tl_set_eq:NN \figurename \lstlistingname
  \tl_if_empty:NF \l_listings_boxed_caption_tl
   {pp
    \tl_if_empty:NTF \l_listings_boxed_shortcaption_tl
     {
      \captionof{figure}{\l_listings_boxed_caption_tl}
     }
     {
      \captionof{figure}[\l_listings_boxed_shortcaption_tl]{\l_listings_boxed_caption_tl}
     }
    \tl_if_empty:NF \l_listings_boxed_label_tl { \label{\l_listings_boxed_label_tl} }
   }
  \leavevmode\box_use:N \l_listings_boxed_box
 }
\ExplSyntaxOff

% Float listings
\newfloat{lstfloat}{htbp}{lop}

% General settings
\lstset{
  mathescape=true,                                                               % escape to LaTeX math with $
  escapeinside={*@}{@*},                                                         % if you want to fully escape to LaTeX
  literate={æ}{{\ae}}1{ø}{{\oe}}1{å}{{\aa}}1                                     % allow æ, ø and å in code
           {Æ}{{\AE}}1{Ø}{{\O}}1{Å}{{\AA}}1,                                     %     (this change was taken from the preamble of the MatFysTutor LaTeX Guide)
}

% Formatting settings
\lstset{
  % Formatting inside
  stepnumber=1,                                                                  % step between to line-numbers. 1 = each line is numbered
  numbers=left,                                                                  % numbering: none, left, right
  numbersep=5pt,                                                                 % distance between linenumbers and code
  xleftmargin=\parindent,                                                        % indent linenumbers to not be outside of margin
  xrightmargin=\parindent,                                                       % make the indent symmetrix
  columns=[c]fixed,                                                              % makes it monospaced
  % Formatting of border and caption
  captionpos=b                                                                   % caption at the bottom
}
\DeclareCaptionFormat{listing}{%
  \makebox[2.1cm][l]{\qquad#1#2}%
  \parbox[t]{\dimexpr \textwidth-2.1cm}{#3}%
}
\captionsetup[lstlisting]{format=listing, singlelinecheck=off, labelsep=colon}

% Whitespace settings
\lstset{
  showspaces=false,                                                              % show spaces everywhere - adding particular underscores
  showstringspaces=false,                                                        % underline spaces within strings only.
  showtabs=false,                                                                % show tabs within strings adding particular underscores.
  breaklines=false,                                                              % do not break lines
  breakatwhitespace=false,                                                       % do not break lines within whitespace
  tabsize=4                                                                      % tab width
}

% Colour settings
\lstset{
  numberstyle=\color{cGray},
  basicstyle=\small\upshape\ttfamily,
  commentstyle=\color{cGray},
  keywordstyle=[1]\color{cBlue1},
  stringstyle=\color{cGreen},
}

% ---------------------------------------------------------------------------- %
% Code: lstlisting languages
% ---------------------------------------------------------------------------- %
%\input{lstlistings/lrefconc.tex}
%\input{lstlistings/none.tex}
%\input{lstlistings/pseudocode.tex}
%\input{lstlistings/coq.tex}
%\input{lstlistings/sml.tex}
%\input{lstlistings/tip.tex}

% Default language
%\lstset{
%  language=pseudocode,
%}

% ---------------------------------------------------------------------------- %
% Author tools: TODO
% ---------------------------------------------------------------------------- %
\usepackage[obeyFinal]{todonotes}

% Citation TODO
\DeclareDocumentCommand \todocite { g }{
  [
    {\bf \color{cOrange2} \IfValueTF{#1}{#1}{??}}
  ]
}

% ---------------------------------------------------------------------------- %
% Hypermedia
% ---------------------------------------------------------------------------- %

\usepackage{url}                                                                 % \url{link}
% fn REMOVED: removed implicit=false to enable hyperlinks working
% \usepackage[bookmarks=true, bookmarksopen=true, implicit=false]{hyperref}        % \href{link}{replacing text}
%\usepackage[bookmarks=true, bookmarksopen=true]{hyperref}        % \href{link}{replacing text}
\usepackage{hyperref}

\newcommand*{\http}[1]{\href{http://#1}{#1}}                                     % macro for http links: \http{www.matfystutor.dk}
\newcommand*{\mailto}[1]{\href{mailto:#1}{\nolinkurl{#1}}}                       % macro for mails: \mailto{email@email.com}

% ---------------------------------------------------------------------------- %
% Variables for internal customization (multiple languages)
% ---------------------------------------------------------------------------- %
\usepackage{pgfkeys}

\newcommand{\setvalue}[1]{\pgfkeys{/variables/#1}}
\newcommand{\getvalue}[1]{\pgfkeysvalueof{/variables/#1}}
\newcommand{\declare}[1]{%
  \pgfkeys{
    /variables/#1.is family,
    /variables/#1.unknown/.style = {\pgfkeyscurrentpath/\pgfkeyscurrentname/.initial = ##1}
  }%
}

\declare{}

\setvalue{lst_name = Code}
\setvalue{lsts_name = List of code}

% ---------------------------------------------------------------------------- %
% Proofs environments
% ---------------------------------------------------------------------------- %
\setvalue{thm_name     = Theorem}
\setvalue{lem_name     = Lemma}
\setvalue{prop_name    = Proposition}
\setvalue{cor_name     = Corollary}
\setvalue{def_name     = Definition}
\setvalue{conj_name    = Conjecture}
\setvalue{rem_name     = Remark}

\setvalue{obs_name     = Observation}
\setvalue{hyp_name     = Hypothesis}
\setvalue{example_name = Example}

\setvalue{proof_name   = Proof}

% ---------------------------------------------------------------------------- %
\usepackage[english]{babel}

\renewcommand{\lstlistingname}{\getvalue{lst_name}}
\floatname{lstfloat}{\getvalue{lst_name}}

\usepackage{amsthm}

% ---------------------------------------------------------------------------- %
% Logic and proofs
% ---------------------------------------------------------------------------- %
 \newtheorem{theorem}{\getvalue{thm_name}}[section]
 \newtheorem{lemma}[theorem]{\getvalue{lem_name}}

 \newtheorem{definition}[theorem]{\getvalue{def_name}}

%\renewcommand*{\proofname}{Proof}

% ---------------------------------------------------------------------------- %
% Lists
% ---------------------------------------------------------------------------- % 
\usepackage{enumitem}

% ---------------------------------------------------------------------------- %
% Global custom packages 
% ---------------------------------------------------------------------------- %

\usepackage[theme=default]{jlcode_fn_custom}
\usepackage{algorithm}
\usepackage{algpseudocode}
\usepackage[a4paper, total={6in, 8in}, top=3.5cm, bottom=3.5cm, left=2.5cm, right=2.5cm]{geometry}
\usepackage{forest}
\usepackage{varwidth}

% for table formatting
\setlength{\arrayrulewidth}{0.4mm}
\setlength{\tabcolsep}{7pt}

% ---------------------------------------------------------------------------- %
% Global custom macros
% ---------------------------------------------------------------------------- %

% \mathcal literals
\def\G{{\mathcal G}}
\def\M{{\mathcal M}}
\def\Nmann{{\mathcal N}}
\def\P{{\mathcal P}}
\def\X{{\mathcal X}}
\def\Y{{\mathcal Y}}
\def\L{{\mathcal L}}
\def\A{{\mathcal A}}
\def\H{{\mathcal H}}
\def\O{{\mathcal O}}
\def\T{{\mathcal T}}
\def\S{{\mathcal S}}

\def\I{{\mathbb I}}

% \mathfrak literals
\def\g{{\mathfrak g}}

% \mathrm literals
\def\St{{\mathrm{St}}}
\def\Sp{{\mathrm{Sp}}}
\def\GL{{\mathrm{GL}}}
\def\Orth{{\mathrm{O}}}
\def\cay{{\mathrm{cay}}}
\def\diff{{\mathrm{diff}}}
\def\sub{{\mathrm{sub}}}
\def\skew{{\mathrm{skew}}}
\def\grad{{\mathrm{grad}}}
\def\sym{{\mathrm{sym}}}
\def\tr{{\mathrm{tr}}}
\def\rank{{\mathrm{rank}}}
\def\Exp{{\mathrm{Exp}}}
\def\st{{\mathrm{s.t.}}}
\def\red{{\mathrm{red}}}
\def\reff{{\mathrm{ref}}}
\def\im{{\mathrm{im}}}
\def\psd{{\mathrm{PSD}}}
\def\ca{{\mathrm{cache}}}
\def\lift{{\mathrm{lift}}}
\def\retract{{\mathrm{retract}}}
\def\adam{{\mathrm{Adam}}}
\def\stiefeladam{{\mathrm{StiefelAdam}}}
\def\loss{{\mathrm{loss}}}
\def\red{{\mathrm{red}}}
\def\proj{{\mathrm{proj}}}
\def\train{{\mathrm{train}}}
\def\id{{\mathrm{Id}}}
\def\rand{{\mathrm{rand}}}

\DeclareMathOperator{\diag}{diag}
\DeclareMathOperator{\sech}{sech}

\DeclareMathOperator{\relu}{ReLU}
\DeclareMathOperator{\explu}{ExpLU}

% Define a new level of section
\newcommand{\subsubsubsection}[1]{%
  \paragraph{#1}\mbox{}\\[1ex]}

\newcommand{\stexttt}[1]{{\small\texttt{#1}}}
\newcommand{\scripttexttt}[1]{{\scriptsize\texttt{#1}}}

\usepackage[a4paper]{geometry}
\usepackage[
  backend=biber,
  style=alphabetic, % options: alphabetic, numeric, ...
  sortlocale=EN_GB,
  useprefix=false,
  url=false,
  doi=true,
  eprint=false,
  maxbibnames=99,
]{biblatex}

\addbibresource{references.bib}

% ---------------------------------------------------------------------------- %
% Title
% ---------------------------------------------------------------------------- %

\begin{document}

\thispagestyle{empty} 
\begin{center}
\textbf{\Large Master thesis} \\
\vspace{1cm}
\Huge Structure-preserving model reduction of Hamiltonian systems by learning a~symplectic autoencoder \\
\vspace{2.1cm}
\begin{Large}	
\textbf{Author:} \\
\medskip
Florian Konrad Josef Niggl \\
\vspace{2.1cm}
Supervisor: Prof. Dr. Tatjana Stykel \\
Second Examiner: Prof. Dr. Daniel Peterseim \\
\vspace{2.1cm}
Faculty of Mathematics, Natural Sciences, and Materials Engineering \\
\vspace{1cm}
Universität Augsburg \\
\end{Large}
\vspace{2.1cm}
\includegraphics[width=0.34\textwidth]{{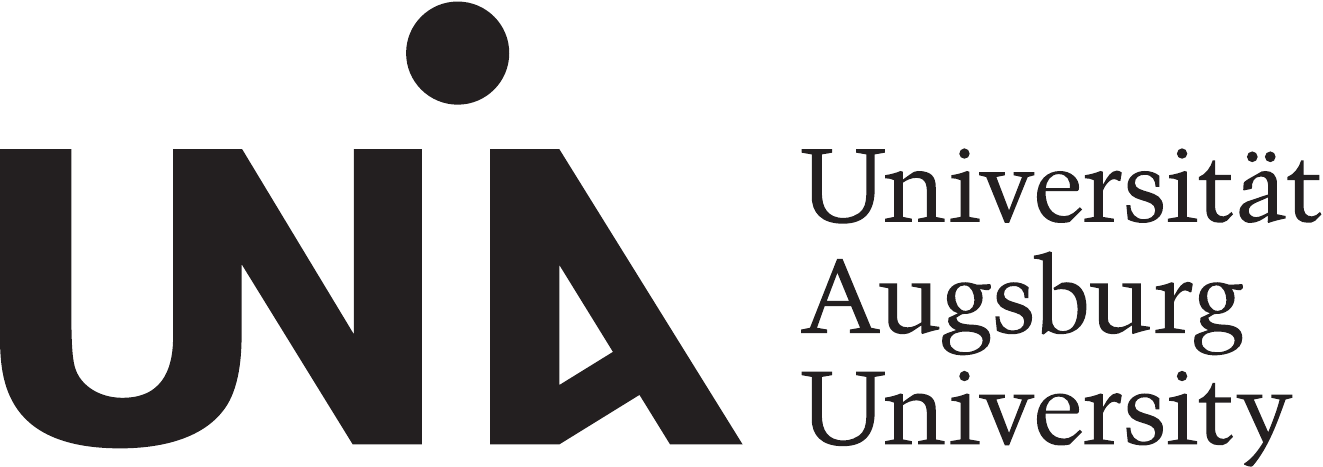}}  
\end{center}

\newpage
\thispagestyle{empty} 

\begin{abstract}
Evolutionary partial differential equations play a~crucial role in many areas of science and engineering. Spatial discretization of these equations leads to a~system of ordinary differential equations which can then be solved by numerical time integration. Such a~system is often of very high dimension, making the simulation very time consuming. One way to reduce the computational cost is to approximate the large system by a~low-dimensional model using a~model reduction approach and to solve the reduced-order model instead of the original one.

This master thesis deals with structure-preserving model reduction of Hamiltonian systems by using machine learning techniques. We discuss a~nonlinear model reduction approach based on the construction of an~encoder-decoder pair that minimizes the approximation error and satisfies symplectic constraints in order to guarantee the preservation of the symplectic structure inherent in Hamiltonian systems. More specifically, we study an~autoencoder network that learns a~symplectic encoder-decoder pair. Symplecticity poses some additional difficulties, as we need to ensure this structure in each network layer. Since these symplectic constraints are described by the (symplectic) Stiefel manifold, we use manifold optimization techniques to ensure the symplecticity of the encoder and decoder. A~particular challenge is to adapt the ADAM optimizer to the manifold structure. We present a~modified ADAM optimizer that works directly on the Stiefel manifold and compare it to the existing approach based on homogeneous spaces. In addition, we propose several modifications to the network and training setup that significantly improve the performance and accuracy of the autoencoder. Finally, we numerically validate the modified optimizer and different learning configurations on two Hamiltonian systems, the 1D wave equation and the sine-Gordon equation, and demonstrate the improved accuracy and computational efficiency of the presented learning algorithms.
\end{abstract}

\vspace{2cm}
\textbf{Accompanying Julia Code:} \\
\indent \http{https://github.com/florianniggl/Master-thesis-implementation}

\newpage
\pagenumbering{roman}   
\tableofcontents   
\newpage
\pagenumbering{arabic} 
\newpage

\newpage
\section{Introduction}

In nature, we find the concept of a~wave in many different places. Just imagine, for example, a~wave travelling in water, 
the propagation of acoustic waves or a~slackline that is stretched between two trees and set swinging.
When we try to describe this physical phenomenon of a~wave mathematically (and make some assumptions for simplification), 
we get the \textit{partial differential equation}
\begin{align} \label{waveequationexplained}
u_{tt} = \mu^2 u_{\xi \xi}.
\end{align}
Given the slackline example, this can be interpreted as follows: 
The function $u(t,\xi)$ describes the elevation of the slackline (relative to the resting state) 
at a~certain position $\xi \in[0,L]$ on the slackline at time $t \in [0,T]$. The factor $\mu > 0$ describes the wave speed. 
In this context, equation~\eqref{waveequationexplained} just means that the acceleration in time equals the acceleration along the position on the 
slackline multiplied by the squared wave speed.
This correlation originates from the Newtons well-known law $F=ma$, where $F$ denotes the force (that acts in the swinging system), 
$m$ the mass (of the slackline at a~point) and $a$ the acceleration (in time).
What is left to complete the physical mechanism of a~swinging slackline, are the initial and boundary states. 
In this case, we have the boundary conditions 
$$u(t,0) = u(t,L) = 0,$$
since the ends of the slackline are tied to the tree. 
Although these boundary conditions can vary for all the examples above, 
their \enquote{movement} is described by equation~\eqref{waveequationexplained}. This partial differential equation is called 
\textit{1D linear wave equation}. It is an~example of a~so-called \textit{Hamiltonian system}.
Partial differential equations like the 1D linear wave equation can in general not be solved analytically. 
They are usually reduced to a~\textit{ordinary differential equation} (ODE) by discretizing the spatial domain. 
The resulting ODE can then be solved by numerical integration. 
The problem that we run into here is that the discretization of the spatial domain leads to very high dimensional ODEs.
This makes it numerically expensive.
The question is: How can we efficiently solve such a~discretized Hamiltonian system with sufficient accuracy?
One way of doing this is a~\textit{structure-preserving model reduction}. 
This approach uses so-called \textit{encoder} and \textit{decoder} functions to map the Hamiltonian system into a~system of smaller dimension.
The crucial point is that in this reduced dimension, we get a~Hamiltonian system again.
We then solve the reduced system and recover an~approximate solution of the original system by mapping the reduced solution back to 
the full dimension using the decoder function.

The goal of this thesis is to discuss a~nonlinear approach to determine an~encoder-decoder pair such that we minimize the 
error of the reconstructed approximate solutions.
Precisely, we will use an~\textit{autoencoder network} to learn the encoder-decoder pair.
The conservation of structure requires that the encoder and decoder functions are so-called \textit{symplectic} functions. 
This poses some additional challenges, as we need to ensure this structure in each network layer. 
Since these symplecticity constraints are described by certain manifolds, 
we will apply manifold optimization techniques to ensure symplecticity of the encoder and decoder. 

At the beginning, in Section~\ref{manifolds}, we give a~brief overview of the most common manifold concepts and the geometric 
objects needed in later chapters. We will also focus on how to generalize optimization algorithms to manifolds.
Section~\ref{sec:modelreduction} is the main part of the thesis. 
Here, we describe the model reduction of Hamiltonian systems and introduce an~autoencoder network to find the encoder-decoder pair. 
The network we present was first introduced in \cite{main:brantner2023}. Our goal is to examine the different network layers and 
discuss some change proposals.
Here, we will draw special attention to the manifold update step.
More specifically, we will introduce a~new manifold optimization step that generalizes the Adam optimizer to the Stiefel manifold.
At the end of this section, we show different learning setups for the network with and without our change proposals. We also explain how
we implemented these setups in the \textit{Julia} programming language.
Finally, in Section~\ref{sec:numericalexperiments}, we present the numerical results for the implemented learning setups for two different 
examples of Hamiltonian systems.

\newpage
\section{Manifolds and optimization} \label{manifolds}

This chapter is the theoretical basis for the following chapters. 
In Section~\ref{sec:manifolds_subsec:manifoldtheory}, we give a~compact summary of the manifold theory needed in this thesis. 
In Section~\ref{sec:manifolds_subsec:optimizationtheory}, we discuss the problems and differences of manifold optimization in comparison 
to Euclidean optimization. 
Lie groups, a~special kind of manifolds with some additional structure, are considered in Section~\ref{sec:manifolds_subsec:liegrouptheory}.
In Section~\ref{sec:manifolds_subsec:examplemanifolds}, we introduce the orthogonal group, the compact Stiefel manifold and the symplectic Stiefel 
manifold. We present core properties of these manifolds which we will use in later chapters.

\subsection{Manifold theory} \label{sec:manifolds_subsec:manifoldtheory}

This section aims to give a~brief summary of the most important concepts of manifold theory. 
It is intended more as a~reminder than a~complete and clean introduction. 
A~detailed introduction can be found in \cite{optonmatmans:absil2008}, from which most of the results in this section are taken.

To start with, an~\textit{$N$-dimensional manifold} is a~geometrical object consisting of a~set $\M$ and a~maximal atlas $A^{+}$ of charts 
from $\M$ to $\R^N$ so that the topology induced by $A^{+}$ on $\M$ is second-countable and Hausdorff. One can think of a~manifold as a~space 
that locally looks like the Euclidean space $\R^{N}$ in every point. But, whereas the Euclidean space is "straight", the manifold can be curved. 

There are many ways to discover new manifolds from existing ones. One possibility is to obtain a~manifold $(\Nmann, B^{+})$, which is a~subset of a~given manifold 
$(\M, A^{+})$, as a~so-called \textit{embedded submanifold of $\M$}. For this, the inclusion map $\iota: \Nmann \to \M$ must be an~immersion and the topology induced by 
$B^{+}$ needs to equal the subspace topology induced by the ambient space.

A~manifold as defined above carries only a~differentiable structure, but we have no information about angles or distances on the manifold. 
To get this information, we need to add more structure. For this, we endow the tangent spaces of $\M$ at a~point $X \in \M$, 
denoted by $T_X\M$, with a~smoothly varying inner product 
$$g_X(Z_1,Z_2) := \langle Z_1,Z_2 \rangle_X \quad \text{for all } Z_1,Z_2 \in T_X\M.$$
We call the tuple $(\M, g)$ a~\textit{Riemannian manifold}. 

We can define a~generalization of the gradient of a~smooth function $f: \M \to \R$. For the Euclidean space $\M=\R^N$, we have 
$\nabla f(X) := \left(\tfrac{\partial f(X)}{\partial X_1}, ... ,\tfrac{\partial f(X)}{\partial X_N}\right)^T$. We cannot use this, because we have no equivalent for
$\tfrac{\partial f(X)}{\partial X_i}$ on general manifolds. But we do know that $\nabla f(X)$ is the unique vector satisfying $\langle \nabla f(X), Z \rangle_X = DF(X)[Z]$ for all tangent vectors $Z \in T_X\M$.
We use this property to define the \textit{Riemannian gradient} on a~Riemannian manifold $(\M, g)$ at a~point $X \in \M$ as the unique element 
$\grad f(X) \in T_X\M$ satisfying
$$\langle \grad f(X), Z \rangle_X = DF(X)[Z] \quad \text{ for all } Z \in T_X\M.$$ 

One problem, we have on (Riemannian) manifolds, is that there is no canonical connection between tangent spaces of different points of $\M$.
The so-called \textit{affine connection} and in particular the unique \textit{Riemannian connection} (with respect to a~Riemannian metric) 
establish such a~connection. They generalize the concept of directional derivates to vector fields.

Using the Riemannian connection, we can now specify second-order derivates of curves on a~Riemannian manifold. This is realized by \textit{acceleration vector fields}. 
We call a~curve on $\M$ with zero acceleration a~\textit{geodesic}. Geodesics play a~crucial role in many manifold applications.
They are uniquely defined by a~starting point and an~initial velocity vector. Using geodesics, we can define the \textit{exponential mapping} (for a~point $X \in \M$)
\begin{align} \label{sec:manifold_env:align_name:manifoldtheoryexponentialmap}
  \Exp_X: T_X\M &\to \M \\
  Z &\mapsto \gamma(1,X,Z), \nonumber
\end{align}
where $\gamma(1,X,Z)$ denotes the geodesic with starting point $X$ and an~initial velocity vector $Z$ evaluated at time $t=1$. 
Note that the exponential mapping is not necessarily defined for all elements of the tangent space.

The exponential mapping maps elements of the tangent space back to the manifold. It fulfills two core properties:
\begin{itemize}
  \item[1.] $\Exp_X$ is differentiable, and it holds $\Exp_X(0_X) = X$ for all $X \in \M$.
  \item[2.] $D\Exp_X(0_X) \equiv \id_{T_X\M}$ (using the canonical identification $T_{0_X}T_X\M \simeq T_X\M$).
\end{itemize}
These are two important properties. Locally around a~point $X \in \M$, it makes no difference (for the first-order derivative) 
if we go along a~curve $\beta$ on $\M$ with initial direction $Z$ or if we go in direction $Z$ on the tangent space and then map 
back to $\M$ via $\Exp_X$:
$$ \frac{d}{dt} \beta(t)\bigg|_{t=0} = \frac{d}{dt} \Exp_X(tZ)\bigg|_{t=0}.$$
A~mapping $R_X: T_X\M \to \M$ with these properties is generally called a~\textit{retraction}. Retractions play an~important role in 
manifold optimization.

The exponential map has another interesting property: It \textit{parallel-transports} its own velocity vectors, meaning that it 
keeps the direction (with respect to the Riemannian connection) of the tangent vector while transporting it from one tangent space to another.

Computation of the parallel transport often requires to solve a~nonlinear ordinary differential equation, which makes it an~expensive task in practice.
A~generalization of the concept of parallel transport that is computationally cheaper is the \textit{vector transport}.
Vector transport, in general, does not make use of second-order information, and thus the computational costs can be reduced significantly.
This cost reduction comes with a~loss of accuracy because the vectors are no longer transported 
with respect to the underlying Riemannian connection. 
As this is not a~standard concept of manifold theory, we will outline some important properties of vector transport in more detail.
A~formal definition of this concept is as follows:
\begin{definition}
  A~vector transport on a~manifold $\M$ with a~tangent bundle $T\M$ is a~smooth mapping
  \begin{align*}
    \T: T\M \oplus T\M &\to T\M \\
    (Z_X, Y_X) &\mapsto \T_{Z_X}(Y_X) \in T\M
  \end{align*}
  satisfying the following properties for all $X \in \M$:
  \begin{itemize}
    \item[(i)] (Associated retraction) There exists a~retraction $R_X$, called the retraction associated with $\T$, such that
          $$\pi \left( \T_{Z_X}(Y_X) \right) = R_X(Z_X) \quad \text{for all } X \in \M,$$
          where $\pi\left( \T_{Z_X}(Y_X) \right)$ denotes the foot of the tangent vector $\T_{Z_X}(Y_X)$.
    \item[(ii)] (Consistency) $\T_{0_X}(Y_X) = Y_X$ for all $Y_X \in T_X\M$.
    \item[(iii)] (Linearity) $\T_{Z_X}(aY_X + bW_X) = a\T_{Z_X}(Y_X) + b\T_{Z_X}(W_X)$ for all $Y_X,W_X \in T_X\M$ and $a,b \in \R$.
  \end{itemize}
\end{definition}

There are different ways of retrieving a~vector transport. Here, we introduce two computationally efficient ways to obtain 
a~vector transport along a~given retraction. One method uses a~differentiated retraction.

\begin{lemma}\cite[Chapter 8.1.2]{optonmatmans:absil2008} \label{sec:manifold_env:lemma_name:manifoldtheorydiffvectransport}
  Let $\M$ be a~manifold endowed with a~retraction $R$. Then a~vector transport along $R$ on $\M$ is defined by 
  \begin{align*}
    \T_{Z_X}(Y_X) 
    &:= DR_X(Z_X)[Y_X] \\
    &= \frac{d}{dt}R_X(Z_X + tY_X)\bigg|_{t=0}.
  \end{align*}
\end{lemma}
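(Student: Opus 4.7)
The plan is to verify each of the three defining axioms of a vector transport for the candidate map $\T_{Z_X}(Y_X) := DR_X(Z_X)[Y_X]$, plus smoothness, by essentially unpacking the definition of a retraction. Since $R$ is smooth by assumption, its differential defines a smooth bundle map on $T\M \oplus T\M$, so the smoothness requirement is immediate; the real work is checking the three properties.

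For the associated retraction property (i), I would first pin down the relevant tangent spaces. Since $R_X : T_X\M \to \M$ is smooth, its differential at $Z_X \in T_X\M$ is a linear map $DR_X(Z_X) : T_{Z_X}(T_X\M) \to T_{R_X(Z_X)}\M$; invoking the canonical identification $T_{Z_X}(T_X\M) \simeq T_X\M$ (since $T_X\M$ is a vector space), we may feed $Y_X \in T_X\M$ into $DR_X(Z_X)$ and obtain a tangent vector whose foot is exactly $R_X(Z_X)$. Thus $\pi(\T_{Z_X}(Y_X)) = R_X(Z_X)$, and $R$ itself serves as the associated retraction.

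For consistency (ii), I would specialize to $Z_X = 0_X$. By the second defining property of a retraction, $DR_X(0_X) \equiv \id_{T_X\M}$ under the same canonical identification, hence $\T_{0_X}(Y_X) = DR_X(0_X)[Y_X] = Y_X$. For linearity (iii), I would simply note that $DR_X(Z_X)$ is, by construction, a linear map on $T_X\M$, so $\T_{Z_X}(aY_X + bW_X) = a\T_{Z_X}(Y_X) + b\T_{Z_X}(W_X)$ follows without further computation. The alternative expression $\T_{Z_X}(Y_X) = \tfrac{d}{dt}R_X(Z_X + tY_X)|_{t=0}$ follows from the definition of the differential along a straight line in $T_X\M$.

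No step is really an obstacle; the only point that requires care is consistently applying the canonical identification $T_{Z_X}(T_X\M) \simeq T_X\M$ so that $DR_X(Z_X)$ can genuinely be interpreted as a map with domain $T_X\M$. Once that identification is fixed, the three axioms follow directly from the smoothness, the identity-differential condition, and the linearity of the total derivative.
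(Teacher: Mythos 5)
Your verification is correct and is exactly the standard argument: the foot of $DR_X(Z_X)[Y_X]$ is $R_X(Z_X)$ by definition of the differential, consistency follows from the retraction property $DR_X(0_X)\equiv\id_{T_X\M}$ under the canonical identification $T_{Z_X}(T_X\M)\simeq T_X\M$, linearity is the linearity of the differential, and smoothness is inherited from $R$. The thesis does not prove this lemma itself but cites \cite[Chapter 8.1.2]{optonmatmans:absil2008}, and your argument coincides with the verification given there, so there is nothing to add.
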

This method works for all smooth manifolds.
Another method only works for submanifolds of Euclidean spaces.

\begin{lemma}\cite[Chapter 8.1.3]{optonmatmans:absil2008} \label{sec:manifold_env:lemma_name:manifoldtheorysubvectransport}
  If $\M$ is an~embedded submanifold of a~Euclidean space $\mathcal{E}$ and $\M$ is endowed with a~retraction $R$, then, relying on the natural 
  inclusion $T_X\M \subseteq \mathcal{E}$ for all $X \in \M$, the vector transport is determined by
  \begin{align*}
    \T_{Z_X}(Y_X)
    &:= \P_{R_X(Z_X)}(Y_X),
  \end{align*}
  where $\P_X$ denotes the orthogonal projector onto $T_X\M$.
\end{lemma}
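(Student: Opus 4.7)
The plan is to verify, in turn, the three defining properties of a vector transport together with the required smoothness, treating each axiom as a direct computation that exploits the retraction property of $R$ and the characterization of $\P_{R_X(Z_X)}$ as an orthogonal projector onto $T_{R_X(Z_X)}\M$. Throughout, I will use the natural inclusions $T_X\M \subseteq \mathcal{E}$ and $T\M \oplus T\M \subseteq \mathcal{E} \times \mathcal{E}$ so that all expressions can be differentiated in the ambient Euclidean space.

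First I would check property (i). By construction $\P_{R_X(Z_X)}(Y_X) \in T_{R_X(Z_X)}\M$, so the foot of this tangent vector is $R_X(Z_X)$, which is exactly what the associated-retraction condition requires. Next, for property (ii), substituting $Z_X = 0_X$ and using the retraction identity $R_X(0_X) = X$ from the first core property of retractions reduces the map to $\P_X(Y_X)$; since $Y_X$ already lies in $T_X\M$, the orthogonal projector acts as the identity and $\T_{0_X}(Y_X) = Y_X$. Property (iii) is immediate: for fixed $X$ and $Z_X$, the map $Y_X \mapsto \P_{R_X(Z_X)}(Y_X)$ is the restriction of a linear operator on $\mathcal{E}$ to $T_X\M$, so linearity in $Y_X$ carries over with the scalars $a,b$.

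The remaining ingredient, which I expect to be the only non-trivial point, is smoothness of the map
\[
(Z_X, Y_X) \mapsto \P_{R_X(Z_X)}(Y_X)
\]
on the Whitney sum $T\M \oplus T\M$. I would argue this as a composition: the retraction $R$ is smooth by definition, and for an embedded submanifold of a Euclidean space the assignment $X \mapsto \P_X$ is smooth as a map into $\mathrm{End}(\mathcal{E})$; composing with $Z_X \mapsto R_X(Z_X)$ gives a smooth family of projectors, and evaluating this family on the second argument $Y_X$ is bilinear and hence smooth in both variables. Together with the three verified axioms, this establishes that $\T$ is a vector transport along $R$.

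The main obstacle is the smoothness of $X \mapsto \P_X$, which is a standard but non-obvious fact about embedded submanifolds; I would either invoke it from the reference \cite{optonmatmans:absil2008} or, if more detail is desired, argue locally via a smooth family of bases of $T_X\M$ (obtained from a local parametrization) and the explicit formula for the orthogonal projector in terms of such a basis.
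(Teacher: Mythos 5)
Your verification is correct: the paper does not prove this lemma at all but simply cites it from \cite[Chapter 8.1.3]{optonmatmans:absil2008}, and your argument is exactly the standard one given there — properties (i)--(iii) follow directly from $\P_{R_X(Z_X)}(Y_X)\in T_{R_X(Z_X)}\M$, from $R_X(0_X)=X$ together with $\P_X|_{T_X\M}=\id$, and from linearity of the projector, while smoothness reduces to smoothness of $R$ and of $X\mapsto \P_X$ (which, as you note, one gets locally from a smooth frame, e.g. $\P_X=B_X(B_X^TB_X)^{-1}B_X^T$). The only cosmetic point is that the Whitney sum should be viewed inside $\M\times\mathcal{E}\times\mathcal{E}$ (base point plus two tangent vectors) rather than $\mathcal{E}\times\mathcal{E}$, which does not affect the argument.
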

Note that we are not restricted to any specific Riemannian metric as vector transports do not rely on any affine connection. 
It is not at all clear whether the retrieved vector transport approximately maintains the vector's direction with respect 
to the underlying affine connection as parallel transport does. How well each vector transport works depends on the application for which it is used.

\subsection{Optimization on manifolds} \label{sec:manifolds_subsec:optimizationtheory}

An~unconstrained Euclidean optimization problem has the form
\begin{align}\label{sec:manifold_env:align_name:optimizationeuclideanoptprob}
  &\min f(x) \quad \st \quad x \in \R^{N},
\end{align}
where $f:\R^N \to \R$ is an~at least continuous function.
Often, the search space is restricted to a~subset of $\R^N$. In this case, we talk about a~constrained optimization problem.
Sometimes we can reformulate the constraints so that the search space is a~manifold.
This way, we aim to solve an~unconstrained optimization problem on a~manifold instead:
\begin{align}\label{sec:manifold_env:align_name:optimizationmanifoldoptprob}
  &\min f(X) \quad \st \quad X \in \M.
\end{align}
This causes some new issues as we need to modify the Euclidean optimization algorithms for \eqref{sec:manifold_env:align_name:optimizationeuclideanoptprob}
to make them work on manifolds. 
We will now use the \textit{gradient descent method} in $\R^N$ to discuss problems and adaptations for optimization algorithms on Riemannian manifolds. 
The concepts used in this chapter come from \cite{optonmatmans:absil2008}.
Let $f\in C^1$ and $x^{(k)} \in R^N$. An~optimization step of the gradient descent method $\R^N$ takes the following steps:
\begin{itemize}
  \item[1.] Compute a~descent direction $v^{(k)} = - \nabla f(x^{(k)})$.
  \item[2.] Compute a~step size $\eta^{(k)} > 0$ (by using any preferred rule like an Armijo rule).
  \item[3.] Update the iterate: $x^{(k+1)} = x^{(k)} + \eta^{(k)} v^{(k)}$.  
\end{itemize}
The gradient descent method is based on using the negative gradient as the direction of descent. Indeed, Taylor's theorem shows
$$f(x - \eta \nabla f(x) ) < f(x) $$
for small enough $\eta > 0$. Step one and two of the gradient descent method above can easily be adapted to manifolds. Instead of the Euclidean gradient,
we use the Riemannian gradient in step one. We can also find a~matching update rule for $\eta$ (details omitted). Problems arise in 
step three: We cannot simply add the descent direction to the current iterate because manifolds do not necessarily have the structure 
of a~vector space. As the Riemannian gradient $\grad f(X)$ is an~element of the tangent space, we will need to find a~way to map the update vector
back to the manifold. Retractions, discussed in Section~\ref{sec:manifolds_subsec:manifoldtheory}, are a~good choice here.
Given a~retraction $R_{X^{(k)}}: T_{X^{(k)}}\M \to \M$, step three turns into
\begin{align}\label{sec:manifold_env:align_name:optimizationmanifoldmanifoldupdatestep}
  X^{(k+1)} = R_{X^{(k)}}\left(\eta^{(k)} V^{(k)}\right) \in \M,
\end{align}
where $V^{(k)} = -\grad f(X^{(k)})$.
It remains to show that the retraction maintains the descent property, i.e., $f(R_X(-\eta \grad f(X))) < f(X)$ for $\eta$ small enough. 
Again, we can use Taylor's theorem: 
\begin{align*}
  f(R_X(-\eta \grad f(X))) 
  &= f(R_X(0_X)) + D(f\circ R_X)(0_X)[-\eta \grad f(X)] + \O(\norm{\eta \grad f(X)}^2_X) \\
  &= f(X) + Df(X)[-\eta \grad f(X)] + \O(\norm{\eta \grad f(X)}^2_X) \\
  %&= f(R_X(0_X)) - \langle \eta \grad f(X), \grad f(X) \rangle_X + \O(\norm{\eta \grad f(X)}^2_X) \\
  &= f(X) - \eta \norm{ \grad f(X)}^2_X + \O(\eta^2 \norm{\grad f(X)}^2_X).
\end{align*}
For $\eta > 0$ small enough this gives the desired result.
The difficulty is to find a~retraction for the manifold that can be computed efficiently. 
A~canonical choice would be the exponential mapping~\eqref{sec:manifold_env:align_name:manifoldtheoryexponentialmap}. 
But since this is computationally expensive, it is often not a~good choice in practice. 

The Riemannian gradient descent method requires only first-order derivatives, which makes it fairly easy to adapt to manifolds via retractions. 
Optimization algorithms that need second-order information are more difficult.
More detailed information about this can be found in \cite[Section 6]{optonmatmans:absil2008}.

\subsection{Matrix Lie groups and homogeneous spaces}\label{sec:manifolds_subsec:liegrouptheory}

A~special kind of manifolds are \textit{(matrix) Lie groups}. These are manifolds with an~additional group structure. In this section, we summarize 
the main properties of matrix Lie groups and how they interact with other manifolds as a~so-called \textit{homogeneous space}.
The definitions and results used in this chapter are taken from \cite{matrixlie:hall2000}, \cite{top:steimle2021} and \cite{adam:brantner2023}.

\subsubsection{Matrix Lie groups}
Matrix Lie groups are defined as closed subgroups of the group of all invertible  real $N\times N$ matrices $\GL(N, \R)$.
\begin{definition} \label{sec:manifold_env:definition_name:liegroup}
  A~closed subgroup $\G$ of $\GL(N, \R)$ is called a~\textit{matrix Lie group}.
\end{definition}
\noindent Although this is not at all clear from the definition, (matrix) Lie groups are indeed (smooth) manifolds itself (cf. \cite[Theorem 2.15]{matrixlie:hall2000}) and 
thus offer a~useful link between group theory and manifold theory.
One of the most important concepts is the \textit{Lie algebra} of a~matrix Lie group.
\begin{definition} \label{sec:manifold_env:definition_name:liealgebra}
  Let $\G \subseteq \GL(N,\R)$ be a~matrix Lie group. Then the \textit{Lie algebra} of $\G$, denoted by $\g$, is the set of all matrices
  $Z\in \R^{N \times N}$ such that the matrix exponential $e^{tZ}$ is in $\G$ for all real numbers $t$, i.e.,
  \begin{align*}  
    \g = \left\{ Z \in \R^{N \times N} \; \big| \; e^{tZ} \in \G \text{ for all } t \in \R \right\}.
  \end{align*}
\end{definition}
\noindent The Lie algebra is defined by curves of the matrix exponential. It can be shown that $\g$ is a~$\R$-vector space and thus offers a~wide range 
of operations that cannot be performed on the Lie group directly.
The \textit{exponential map}
\begin{align} \label{sec:manifold_env:align_name:liegroupexponentialmap} 
    \exp: \g &\to \G \\ 
    Z &\mapsto e^{tZ} \nonumber
\end{align}
takes the Lie algebra back to the Lie group and is locally homeomorphic around $0 \in \g$ and $I_N \in \G$ (cf. \cite[Theorem 3.23]{matrixlie:hall2000}). 
From this we can deduce that the tangent space at each point $A \in \G$ is simply a~transformation of the Lie algebra:
\begin{align} \label{sec:manifold_env:align_name:liegrouptangentspacetransformedliealgreba} 
  T_A\G = \g A:= \left\{ZA \; \big| \; Z \in \g \right\}.
\end{align} 
Note that for the identity matrix $I_N \in \G$, we have $T_{I_N}\G = \g$.
In the following, we will use $\g A$ both for the tangent space $T_A\G$ and for the map $\g A: \g \to T_A\G, Z \mapsto ZA$.

\subsubsection{Lie groups and homogeneous spaces}
As already mentioned, matrix Lie groups combine an~algebraic group structure with manifold properties. We want to use this to define 
a link between a~matrix Lie group and a~matrix manifold for a~better understanding of the internal structure of the manifold.
\begin{definition} \label{sec:manifold_env:definition_name:homogeneousspace}
  Let $\G \subseteq \GL(N,\R)$ be a~matrix Lie group and let $\M \subseteq \GL(N,\R)$ be a~(Riemannian) matrix manifold such that the
  standard matrix multiplication 
  \begin{align*}
    \G \times \M &\to \M \\
    (A,X) &\mapsto AX,
  \end{align*}
  (which defines a~smooth group action) acts transitive on $\M$. Then the tuple $(\G,\M)$ is called a~\textit{homogeneous space}. 
\end{definition}
\noindent Note that this is a~very specific definition of a~homogeneous space that fits our purposes. In a~more general setting, homogeneous spaces are 
often defined as a~tuple of a~topological group and a~topological space together with a~transitive group action.
Acting transitive means that $\G\cdot X = \M$ for all $X \in \M$. 
As we only require the group and any element of $\M$ to recover the whole manifold, a~homogeneous space can be understood as some 
kind of inner symmetry of $\M$ encoded by the Lie group $\G$.

\subsubsection{Retractions on homogeneous spaces} 
We now want to use the homogeneous space connection between the matrix Lie group $\G$ and the matrix manifold $\M$ to construct a~retraction $R^{\M}_X$ on $\M$ at $X \in \M$ 
from a~retraction $R^{\G}_{I_N}$ on $\G$ at the identity $I_N$.

Let $\G \subseteq \GL(N,\R)$ be a~matrix Lie group with a~metric $\langle\cdot, \cdot\rangle_{\G}$ that turns it into a~Riemannian manifold.
Furthermore, let $\M \subseteq \GL(N,\R)$ be a~Riemannian matrix manifold such that the
tuple $(\G,\M)$ defines a~homogeneous space. For an~element $X \in \M$, we define 
\begin{itemize}
  \item[1.] $\G X: \G \to \M, A\mapsto AX$,
  \item[2.] $\g X: \g \to T_X\M, Z \mapsto ZX$.
\end{itemize}
In the following, we will use $\G X$ (and $\g X$) both as the map and the map's image.
Note that $\g X$ is a~surjective linear map, and it holds $D(\G X)(I_N) \equiv \g X$.

Using the metric $\langle \cdot, \cdot \rangle_\G$, we can now make the following definitions.
\begin{definition}
  Let $(\G,\M)$ be a~homogeneous space as described above. Let $X,E \in \M$.
  \begin{itemize}
    \item[1.] The orthogonal complement $\g^{hor,X} := \ker(\g X)^{\perp}$ of the kernel of $\g X$ with respect to $\langle\cdot, \cdot\rangle_{\G}$
    is called \textit{horizontal component} of $\g$ at $X$.
    \item[2.] The isomorphism $\Omega_X := \left(\g X \big|_{\ker(\g X)^{\perp}}\right)^{-1}: T_X\M \to \g^{hor,X}$ 
    is called \textit{lift} at~$X$.
    \item[3.] A~function $\lambda_E: \M \to \G$ such that
    $$ \lambda_E(X)E = X, \quad X \in \M,$$
    is called a~\textit{section} at $E$. We call $E$ a~\textit{distinct element}.
  \end{itemize}
\end{definition}
\noindent A~section can be thought of as a~left-inverse to $\G E$ as $\G E \circ \lambda_E = \id_{\M}$. 
Note that it is not at all clear that such a~section exists, whether it is unique, or how to construct it. 
This heavily depends on the choice of the distinct element $E$. 
\begin{lemma} \label{sec:manifold_env:lemma_name:ghorXtoghorE}
  \cite[Proposition 1]{adam:brantner2023}
  Let $X \in \M$ and let $E \in \M$ be a~distinct element with a~section $\lambda_E$. Then the map
  \begin{align*} 
    \iota: \g^{hor,X} &\to \g^{hor,E} \\ 
    Z &\mapsto \lambda_E(X)^{-1}Z\lambda_E(X) \nonumber
  \end{align*}
  is a~well-defined isomorphism.  
\end{lemma}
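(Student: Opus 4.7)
The plan is to set $A := \lambda_E(X) \in \G$, so that by the defining property of a section we have $AE = X$, and the map becomes $\iota(Z) = A^{-1} Z A$. I would then decompose the claim into three sub-tasks: (i) $\iota$ takes values in $\g$; (ii) $\iota$ sends $\g^{hor,X}$ into $\g^{hor,E}$; (iii) $\iota$ is a linear bijection.

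For (i), I would invoke Definition~\ref{sec:manifold_env:definition_name:liealgebra}: if $Z \in \g$ then $e^{tZ} \in \G$ for every $t \in \R$. Since $\G$ is a subgroup, $A^{-1} e^{tZ} A \in \G$, and the identity $A^{-1} e^{tZ} A = e^{t A^{-1} Z A}$ forces $A^{-1} Z A \in \g$. Consequently, the conjugation $C_A : Y \mapsto A Y A^{-1}$ restricts to a linear automorphism of $\g$ whose inverse is $C_{A^{-1}} = \iota$. From here the bijectivity and linearity in (iii) are automatic: linearity follows directly from matrix distributivity, and $W \mapsto A W A^{-1}$ is a two-sided inverse once (ii) is established on both sides.

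Next I would compare the two kernels. For any $Y \in \g$, the equivalence $YE = 0 \iff A Y A^{-1} X = A Y E = 0$ shows that $C_A$ restricts to a bijection from $\ker(\g E)$ onto $\ker(\g X)$; equivalently, $\iota = C_{A^{-1}}$ bijects $\ker(\g X)$ onto $\ker(\g E)$. So the kernels correspond perfectly; the non-trivial content of (ii) is that their orthogonal complements also correspond.

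The main obstacle is precisely this last point, since the orthogonal complement is a metric notion and the claim is only as strong as the compatibility of $\langle\cdot,\cdot\rangle_\G$ with the conjugation $C_A$. The natural hypothesis under which the argument closes is adjoint-invariance, $\langle B_1, B_2 \rangle_\g = \langle C_A(B_1), C_A(B_2) \rangle_\g$ for all $A \in \G$ and $B_1, B_2 \in \g$, which is inherited from the setup of \cite{adam:brantner2023}. Granting this, for $Z \in \g^{hor,X}$ and any $Y \in \ker(\g E)$ one computes
\begin{align*}
  \langle \iota(Z), Y \rangle_\g
  &= \langle C_A(\iota(Z)), C_A(Y) \rangle_\g
  = \langle Z, C_A(Y) \rangle_\g
  = 0,
\end{align*}
because $C_A(Y) \in \ker(\g X)$ and $Z \perp \ker(\g X)$. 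Hence $\iota(Z) \in \g^{hor,E}$, which together with the preceding steps gives the desired isomorphism. The point that will require the most care in the write-up is making the metric-compatibility assumption explicit and pointing to where it is ensured by the concrete metric chosen on $\G$ in the subsequent sections.
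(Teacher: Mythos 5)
Your proposal is correct in structure and in all its computations, but note that the paper itself does not prove this lemma at all: it is quoted verbatim from \cite[Proposition 1]{adam:brantner2023}, so there is no in-paper argument to compare against. Judged on its own, your argument is the natural one: conjugation by $A=\lambda_E(X)$ preserves $\g$ (via $A^{-1}e^{tZ}A=e^{tA^{-1}ZA}$ and closedness of $\G$ under conjugation), it maps $\ker(\g E)$ bijectively onto $\ker(\g X)$ because $AYA^{-1}X=A(YE)$, and linearity plus the explicit inverse $W\mapsto AWA^{-1}$ give the isomorphism once the horizontal spaces are seen to correspond. The one point you rightly single out is the only real content: mapping orthogonal complements onto orthogonal complements requires the inner product on $\g$ to be invariant under conjugation by elements of $\G$, which is \emph{not} guaranteed by the paper's general Section~2.3 setup (an arbitrary Riemannian metric on $\G$), so the lemma as stated implicitly assumes it. In the only situation where the lemma is used later, $\G=\Orth(N)$ with $g_e(Z_1,Z_2)=\tr(Z_1^TZ_2)$, the invariance is immediate, since for $A\in\Orth(N)$ one has $\tr\left((A^{-1}Z_1A)^T(A^{-1}Z_2A)\right)=\tr(Z_1^TZ_2)$; making that remark explicit, as you propose, would actually strengthen the exposition relative to the paper, which leaves the hypothesis buried in the cited reference.
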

\noindent Using Lemma~\ref{sec:manifold_env:lemma_name:ghorXtoghorE}, we can now construct a~retraction on $\M$ at an~arbitrary element $X\in \M$.
\begin{theorem} \label{sec:manifold_env:theorem_name:derivedretraction}
  Let $X \in \M$ and let $E\in \M$ be a~distinct element with a~section $\lambda_E$. 
  Let $R_{I_N}^{\G}: \g \to \G$ be an~arbitrary retraction on $\G$ at the identity $I_N$. Then the map
  \begin{align*}
    R_X^{\M}: T_X\M &\to \M \\ 
    Z &\mapsto \lambda_E(X) R_{I_N}^{\G} \left( \lambda_E(X)^{-1} \Omega_X(Z)\lambda_E(X) \right)E \nonumber
  \end{align*} 
  defines a~retraction on $\M$ at $X$.
\end{theorem}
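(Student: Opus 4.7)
First I would verify that $R^{\M}_X$ is a well-defined smooth map $T_X\M \to \M$. For $Z \in T_X\M$, the lift $\Omega_X(Z)$ lies in $\g^{hor,X} \subseteq \g$, and by Lemma~\ref{sec:manifold_env:lemma_name:ghorXtoghorE} the conjugation $\lambda_E(X)^{-1}\Omega_X(Z)\lambda_E(X)$ lies in $\g^{hor,E} \subseteq \g$, so it is a valid argument for $R^{\G}_{I_N}: \g \to \G$. Left-multiplication by $\lambda_E(X) \in \G$ followed by the transitive group action on $E \in \M$ then returns an element of $\M$. Smoothness is immediate because every constituent map — the section $\lambda_E$, the lift $\Omega_X$, matrix conjugation, the Lie-group retraction $R^{\G}_{I_N}$, and the group action — is smooth.

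Next I would check the centering property. By linearity, $\Omega_X(0_X) = 0$; then $R^{\G}_{I_N}(0) = I_N$ by the retraction property on $\G$, so
\begin{equation*}
R^{\M}_X(0_X) = \lambda_E(X) \cdot I_N \cdot E = \lambda_E(X) E = X
\end{equation*}
by the defining property of the section.

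The essential computation is the differential at zero. Fix $Z \in T_X\M$ and set $W := \lambda_E(X)^{-1}\Omega_X(Z)\lambda_E(X) \in \g^{hor,E}$. By linearity of the lift and of conjugation, the argument of $R^{\G}_{I_N}$ along the ray $t \mapsto tZ$ is simply $tW$, so $\gamma(t) := R^{\M}_X(tZ) = \lambda_E(X)\, R^{\G}_{I_N}(tW)\, E$ is a smooth curve in $\M$ with $\gamma(0) = X$. Differentiating at $t=0$, pulling the constant matrices $\lambda_E(X)$ and $E$ outside, and using $\tfrac{d}{dt} R^{\G}_{I_N}(tW)\big|_{t=0} = W$ (the retraction property on $\G$), I obtain
\begin{equation*}
\gamma'(0) = \lambda_E(X)\, W\, E = \lambda_E(X)\lambda_E(X)^{-1}\,\Omega_X(Z)\,\lambda_E(X)\, E = \Omega_X(Z) \cdot X,
\end{equation*}
where the last equality uses $\lambda_E(X) E = X$. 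Since $\Omega_X$ is by definition the inverse of the restriction of $\g X$ to $\g^{hor,X}$, it follows that $\Omega_X(Z) \cdot X = (\g X)(\Omega_X(Z)) = Z$, i.e., $DR^{\M}_X(0_X)[Z] = Z$, which completes the proof.

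The main obstacle is the final algebraic cancellation: the conjugation by $\lambda_E(X)^{\pm 1}$ inside the formula is placed there precisely so that it unwinds against the outer multiplications by $\lambda_E(X)$ and $E$, leaving only $\Omega_X(Z)$ acting on $X$, which is tautologically $Z$ by the definition of the lift. Put differently, the structural work has already been done in Lemma~\ref{sec:manifold_env:lemma_name:ghorXtoghorE} (which ensures the intermediate conjugation lands in $\g$ so that $R^{\G}_{I_N}$ may be applied) and in the definition of $\Omega_X$ (which inverts $\g X$); once these are in place, the theorem reduces to a straightforward chain-rule computation.
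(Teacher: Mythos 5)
Your proof is correct and follows essentially the same route as the paper: verify $R_X^{\M}(0_X) = \lambda_E(X)I_N E = X$, then compute $DR_X^{\M}(0_X)[Z]$ via the chain rule and the retraction property $DR_{I_N}^{\G}(0)=\id$, so that the conjugation unwinds to $\Omega_X(Z)X = Z$. Your additional remarks on well-definedness and smoothness (via Lemma~\ref{sec:manifold_env:lemma_name:ghorXtoghorE}) are extra detail the paper leaves implicit, but the core argument is identical.
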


\begin{proof} \ 
  \begin{itemize}
    \item[1.] First, we calculate the image of $R_X^{\M}$ at $0_X$:
    \begin{align*}
      R_X^{\M}(0_X) &= \lambda_E(X) R_{I_N}^{\G}(0_X)E = \lambda_E(X)I_NE = X. 
    \end{align*}
    \item[2.] Next, we calculate $DR_X^{\M}(0_X)$:
    \begin{align*}
      DR_X^{\M}(0_X)[Z] &= \lambda_E(X) \left( DR_{I_N}^{\G}(0_X)\left[ \lambda_E(X)^{-1} \Omega_X(Z)\lambda_E(X) \right]\right)E   \\
      &= \lambda_E(X) \left( \lambda_E(X)^{-1} \Omega_X(Z)\lambda_E(X) \right)E \\
      &= \Omega_X(Z)X \\
      &= Z
    \end{align*}
    for all $Z \in T_X\M$.
  \end{itemize}
\end{proof}
\noindent The chain of mappings to receive the retraction $R_X^{\M}$ from Theorem~\ref{sec:manifold_env:theorem_name:derivedretraction} can be
visualized as
\begin{align}\label{sec:manifold_env:align_name:chainofmapsofderivedretractions}
  T_X\M 
\xrightarrow{\Omega_X} \g^{hor,X} 
\xrightarrow{\lambda_E(X)^{-1}(\cdot)\lambda_E(X)} \g^{hor,E} 
\xrightarrow{R_{I_N}^{\G}} \G 
\xrightarrow{\G E} \M 
\xrightarrow{\lambda_E(X)\M} \M.
\end{align}
This way of deriving a~retraction on a~manifold $\M$ may look unnecessarily complex, but it gives us 
a~vector space that is accessed in the retraction map at every $X \in \M$. 
As displayed in the chain map \eqref{sec:manifold_env:align_name:chainofmapsofderivedretractions}, this vector space is $\g^{hor,E}$, 
the so-called \textit{global tangent space representation}.
As we will see later, this helps us to apply an~optimization step using moments.

\subsection{Special matrix manifolds}\label{sec:manifolds_subsec:examplemanifolds}

This section presents some important matrix Riemannian manifolds. We will have a~close look at some of their main properties 
and their connection to each other. First, we examine the \textit{orthogonal group} both as a~Riemannian manifold and 
as a~matrix Lie group. Second, we introduce the \textit{compact Stiefel manifold}. We define two different Riemannian metrics on this manifold, 
and we show that the compact Stiefel manifold together with the orthogonal group admits the structure of a~homogeneous space.
Finally, we take a~short look at the \textit{symplectic Stiefel manifold} and how it is related to the compact Stiefel manifold.

Most of the results about the orthogonal group and the compact Stiefel manifold are taken from \cite{algoswithorthcons:edelmann1998}, \cite{feasiblemethodortho:wen2013} and \cite{adam:brantner2023}. 
As our main source for the symplectic Stiefel manifold, we used \cite{symplstiefel:stykel2021}.
We will need all three of these manifolds in later chapters, where we will use most of the observed structures 
in a~manifold optimization use case. 

\subsubsection{Orthogonal group} \label{sec:manifolds_subsubsec:orthogonalgroup}

The set of all orthogonal matrices 
\begin{align} \label{sec:manifolds_env:align_name:setoforthogonalmatrices}
  \Orth(N) &= \{ X \in \R^{N \times N} \; \big| \; X^TX = I_N \}
\end{align}
has the structure of a~closed, embedded submanifold of $\R^{N \times N}$. To see this, we can apply the submersion 
theorem to the smooth map
\begin{align*}
  F: \R^{N \times N} &\to \S_{\sym}(N) \\ 
  X &\mapsto X^TX
\end{align*}
from the Euclidean manifold $\R^{N \times N}$ to the Euclidean manifold $\S_{\sym}(N)$ of all symmetric matrices in $\R^{N \times N}$. 
As the identity matrix $I_N \in \R^{N \times N}$ is a~regular value of $F$, the orthogonal group $\Orth(N) = F^{-1}(I_N)$ is a~submanifold of dimension $N^2 - \tfrac{N(N+1)}{2} = \tfrac{N(N-1)}{2}$.
This set is closed, as it is the preimage of a~one-point set under a~continuous function. Since the set is bounded, compactness follows.
For a~point $X \in \Orth(N)$, we obtain the tangent space 
\begin{align} \label{sec:manifolds_env:align_orthogonalgrouptangentspace}
  T_X\Orth(N) &= \ker(DF(X)) = \{Z \; \big| \; X^TZ + Z^TX = 0 \} = \{XW \; \big| \; W \in \S_{\skew}(N) \}.
\end{align}

\subsubsubsection{Euclidean metric and the Riemannian gradient}
As a~submanifold, $\Orth(N)$ naturally inherits the Euclidean metric of $\R^{N \times N}$
\begin{align} \label{sec:manifolds_env:align_name:orthogonalgroupeuclideanmetric}
  g_e(Z_1, Z_2) := \tr(Z_1^T Z_2) \qquad \text{ for all } Z_1, Z_2 \in T_X\Orth(N),
\end{align}
turning it into a~Riemannian submanifold. Note that this metric induces the Frobenius norm.

The orthogonal complement of a~tangent space $T_X\Orth(N)$ with respect to the Euclidean metric is given by 
$(T_X\Orth(N))^{\perp} = \{XS \; \big| \; S \in \S_{\sym}(N)\}$.
For any $Y \in \R^{N \times N}$, the Euclidean metric induces the orthogonal projection 
\begin{align} \label{sec:manifold_env:align_name:orthogonalgroupeuclideanprojection}
   \P_X(Y) = X\skew(X^TY)  \in T_X\Orth(N) 
\end{align}
onto the tangent space at $X$, and
\begin{align} \label{sec:manifold_env:align_name:orthogonalgroupeuclideanorthogonalprojection}
   \P_X^{\perp}(Y) = X\sym(X^TY) \in (T_X\Orth(N))^{\perp}
\end{align}
onto its orthogonal complement. Here, $\skew(X^TY) := \frac{1}{2}\left(X^TY - Y^TX\right)$ denotes the skew-symmetric part 
of $X^TY$ and $\sym(X^TY) := \frac{1}{2}\left(X^TY + Y^TX\right)$ denotes the symmetric part. 
Due to a~well-known result for Riemannian submanifolds, 
this immediately yields the Riemannian gradient of a~function $f$ with respect to $g_e$,
\begin{align} \label{sec:manifold_env:align_name:orthogonalgroupeuclideanriemanniangradient}
 \grad_e f(X) &= \P_X \left(\nabla \overline{f}(X)\right) = \frac{1}{2} X\left(X^T\nabla \overline{f}(X) - \nabla \overline{f}(X)^TX\right),
\end{align}
where $\nabla \overline{f}(X)$ denotes the Euclidean gradient of a~smooth extension $\overline{f}$ of $f$ around $X$ in $\R^{N \times N}$.
Computing this gradient requires $\O(N^3)$ operations.

\subsubsubsection{The orthogonal group as a~matrix Lie group} \label{sec:manifolds_subsubsubsec:orthogonalgroupliegroup}
As the name implies, the orthogonal group is not only a~Riemannian manifold 
but also an~algebraic group with the matrix multiplication as the group operation and the identity matrix as the neutral element.
With respect to the Euclidean metric on $\R^{N \times N}$, $\Orth(N)$ turns into a~closed subgroup of the group of all invertible real $N\times N$ matrices $\GL(N,\R)$.
According to Definition~\ref{sec:manifold_env:definition_name:liegroup}, $\Orth(N)$ is therefore a~matrix Lie group.
This link between group structure and manifold structure offers a~number of useful properties. Due to \eqref{sec:manifold_env:align_name:liegrouptangentspacetransformedliealgreba},
the tangent space at each point $X \in \Orth(N)$ is simply a~transformation of the Lie algebra $\g$:
\begin{align} \label{sec:manifold_env:align_name:orthogonalgrouptangentspacetransformedliealgreba}
  T_X\Orth(N) = \g X := \left\{ZX  \; \big| \; Z \in \g \right\}.
\end{align}
The Lie algebra is defined as the set of all matrices for which the curve of the matrix exponential lies in $\Orth(N)$ (see Definition~\ref{sec:manifold_env:definition_name:liealgebra}).
Therefore, the tangent space of the neutral element $I_N \in \Orth(N)$ is simply $T_{I_N}\Orth(N) = \g$.

\subsubsubsection{Cayley retraction} \label{sec:manifolds_subsubsubsec:orthogonalgroupcayley}
The \textit{Cayley transform} of a~Lie group is defined as 
\begin{align} \label{sec:manifold_env:align_name:orthogonalgroupcayleytransform} 
  \cay: \g &\to \Orth(N) \\ 
  Z &\mapsto (I_N-Z)^{-1}(I_N+Z). \nonumber
\end{align} 
According to \cite[Lemma 8.8]{geomnumint:hairer2006}, its directional derivatives at a~point $Z \in \g$ are given as
\begin{align} \label{sec:manifold_env:align_name:orthogonalgroupDcay} 
  D\cay(Z)[Y] = 2(I_N-Z)^{-1}Y (I_N-Z)^{-1}.
\end{align}
Now it is easy to check that 
\begin{align} \label{sec:manifold_env:align_name:orthogonalgroupcayleyretraction} 
  R^{\cay}_{I_N}: T_{I_N} \Orth(N) &\to \Orth(N) \\
  Z &\mapsto \cay\left(\frac{1}{2}Z\right) \nonumber
\end{align}
defines a~retraction on $\Orth(N)$ at the identity $I_N$. 
From a~numerical point of view, it is an~important question whether this can be calculated efficiently. The main problem here is that 
\eqref{sec:manifold_env:align_name:orthogonalgroupcayleyretraction} contains a~high dimensional matrix inverse. 
In general, this problem cannot be solved efficiently. But, given a~$Z \in T_{I_N}\Orth(N) = \g = \S_{\skew(N)}$ with $\rank(Z) := n \ll N$, we can
get around this issue by applying the Sherman–Morrison–Woodbury formula (SMW). 
First, we observe that $Z$ can be decomposed as $Z = UV$ with $U \in \R^{N \times n}$ and $V \in \R^{n \times N}$.
Now we can apply the SMW formula to the matrix-inverse and get
\begin{align} \label{sec:manifold_env:align_name:orthogonalgroupSMWformulaforinverse}
  \left(I_N-\frac{1}{2}Z\right)^{-1} &= \left(I_N-\frac{1}{2}UV\right)^{-1} = I_N + \frac{1}{2} U \left(I_{n} -\frac{1}{2}VU\right)^{-1}V.
\end{align}
This only requires a~$n \times n$ matrix inverse causing costs of $\O(n^3)$. This is especially advantageous for applications where $n \ll N$.
Putting these steps together, we compute the Cayley retraction as follows:
\begin{align*} 
R^{\cay}_{I_N}(Z)
  =\cay\left(\frac{1}{2}Z\right) 
  &= \left(I_N-\frac{1}{2}UV\right)^{-1}\left(I_N+\frac{1}{2}UV\right) \\
  &= \left(I_N + \frac{1}{2} U \left(I_{n} -\frac{1}{2}VU\right)^{-1}V\right)\left(I_N+\frac{1}{2}UV\right). \nonumber
\end{align*}
This calculation can be done in $\O(N^2n)$. Whereas this is still quite expensive, the calculation gets cheaper 
when a~matrix $X \in \R^{N \times n}$ with $n \ll N$ is multiplied from the right:
\begin{align}\label{sec:manifold_env:align_name:orthogonalgroupSMWformulacayleywithrightmultiplication}
  \cay\left(\frac{1}{2}Z\right) X
  &= \left(I_N + \frac{1}{2} U \left(I_{n} -\frac{1}{2}VU\right)^{-1}V\right)\left(I_N+\frac{1}{2}UV\right)X \\
  &= \left(X + \frac{1}{2}U (VX) \right) + \frac{1}{2}U \left( \left( I_{n} - \frac{1}{2}VU \right)^{-1} \left( VX + \frac{1}{2}VU(VX)  \right) \right). \nonumber
\end{align}
The computational costs of \eqref{sec:manifold_env:align_name:orthogonalgroupSMWformulacayleywithrightmultiplication} are now $\O(Nn^2)$. We will make use of this 
computational trick several times in later chapters.

\subsubsubsection{Overview}
We summarize all the important results from this section in Table~\ref{sec:manifold_env:table_name:orthogonalgrouptable}. 
Note that we did not always display the most efficient representations, and therefore they may not match the computational cost shown.
\begin{table}[ht!]
\centering
\begin{tabular}{||c c c||} 
 \hline
 Name & Formula & Costs \\ [0.5ex] 
 \hline\hline
 $T_X\Orth(N)$      & $\{XW | W \in \S_{\skew}(N) \} = \left\{ZX | Z \in \g \right\}$                     & ---                 \\
 $g_e(Z_1,Z_2)$     & $\tr(Z_1^T Z_2)$                                                                      & $\O(N^2)$                 \\
 $\P_X(Y)$            & $X\skew(X^TY)$                                                                      & $\O(N^3)$           \\
 $\P^{\perp}_X(Y)$    & $X\sym(X^TY)$                                                                       & $\O(N^3)$           \\
 $\grad_e f(X)$     & $\frac{1}{2} X \left(X^T\nabla \overline{f}(X) - \nabla \overline{f}(X)^TX\right)$   & $\O(N^3)$           \\
 $R^{\cay}_{I_N}(Z) $ & $\left(I_N-\frac{1}{2}Z\right)^{-1}\left(I_N+\frac{1}{2}Z\right)$                 & $\O(N^3)\text{ or }\O(N^2n)$  \\
 \hline
\end{tabular}
\caption{Geometric concepts for the orthogonal group $\Orth(N)$.}
\label{sec:manifold_env:table_name:orthogonalgrouptable}
\end{table}

\subsubsection{Compact Stiefel manifold}\label{sec:manifolds_subsubsec:compactstiefel}

In Section~\ref{sec:manifolds_subsubsec:orthogonalgroup} we have analyzed the set of all orthogonal matrices in $\R^{N \times N}$ 
and were able to detect the structure of a~Riemannian submanifold. The concept of orthogonal matrices can easily be generalized 
whilst still keeping the structure of a~manifold. Instead of orthogonal matrices, we now want to observe the set of all matrices 
consisting of $1\leq n\leq N$ orthonormal columns. We define this set as
\begin{align} \label{sec:manifolds_env:align_name:setofcompactstiefelmatrices}
  \St(n,N) &:= \{ X \in \R^{N \times n} \; \big| \; X^TX = I_n \}.
\end{align}
Similar to the case of the orthogonal group, applying the submersion theorem to the map 
\begin{align*}
  F: \St(n,N) &\to \S_{\sym}(n) \\
X &\mapsto X^TX 
\end{align*}
gives us a~compact, embedded submanifold of $\R^{N \times n}$ of dimension $Nn - \tfrac{n(n+1)}{2}$, 
the so-called \textit{compact Stiefel manifold}. Note that the orthogonal group can be seen as a~special case of the compact Stiefel 
manifold where $N=n$.
As done for $\Orth(N)$ in \eqref{sec:manifolds_env:align_orthogonalgrouptangentspace}, we obtain the tangent space to $\St(n,N)$ at a~point $X \in \St(n,N)$ as
\begin{align} \label{sec:manifolds_env:align_compactstiefeltangentspace}
  T_X\St(n,N) &= \ker(DF(X)) \\ 
  &= \{Z \; \big| \; X^TZ + Z^TX = 0 \} \nonumber \\ 
  &= \{XW + X_{\perp}K \; \big| \; W \in \S_{\skew}(n), K \in \R^{(N-n) \times n} \nonumber \},
\end{align}
where $\S_{\skew}(n)$ denotes the set of all skew-symmetric matrices in $\R^{n \times n}$ and $X_{\perp}$ is any 
matrix that spans the orthogonal complement of $X$. Unlike the tangent space of $\Orth(N)$, here each tangent vector consists 
of a~skew-symmetric part and an~additional part lying in the orthogonal complement of $X$.  

\subsubsubsection{Euclidean metric and the Riemannian gradient}
Analogous to \eqref{sec:manifolds_env:align_name:orthogonalgroupeuclideanmetric}, $\St(n,N)$ naturally inherits 
the Euclidean metric 
\begin{align} \label{sec:manifold_env:align_name:compactstiefeleuclideanmetric}
  g_e(Z_1, Z_2) := \tr(Z_1^T Z_2) \text{ for all } Z_1, Z_2 \in T_X\St(n,N)
\end{align}
from the ambient space $\R^{N \times n}$, turning it into a~Riemannian submanifold. 
The orthogonal complement of the tangent space at a~point $X$ is $(T_X\St(n,N))^{\perp} = \{XS \; \big| \; S \in \S_{\sym}(n)\}$.
For any $Y \in \R^{N \times n}$, we get the orthogonal projection 
\begin{align} \label{sec:manifold_env:align_name:compactstiefeleuclideanprojection}
   \P_X(Y) = (I_N - XX^T)Y + X \skew(X^TY) \in T_X\St(n,N) 
\end{align}
onto the tangent space at $X$, and
\begin{align} \label{sec:manifold_env:align_name:compactstiefeleuclideanorthogonalprojection}
   \P_X^{\perp}(Y) = X \sym(X^TY) \in (T_X\St(n,N))^{\perp}
\end{align}
onto its orthogonal complement. Like in \eqref{sec:manifold_env:align_name:orthogonalgroupeuclideanriemanniangradient}, we easily obtain the 
Riemannian gradient of a~function $f$ with respect to $g_e$ by applying the orthogonal projection. This is
\begin{align} \label{sec:manifold_env:align_name:compactstiefeleuclideanriemanniangradient}
 \grad_e f(X) 
 &= \P_X \left(\nabla \overline{f}(X)\right) \\
 &= (I_N - XX^T)\nabla \overline{f}(X) + X \frac{1}{2}\left(X^T\nabla \overline{f}(X) - \nabla \overline{f}(X)^TX\right) \nonumber \\
 &= \nabla \overline{f}(X) -  \frac{1}{2} X\left(X^T\nabla \overline{f}(X) + \nabla \overline{f}(X)^TX\right), \nonumber 
\end{align}
where $\nabla \overline{f}(X)$ denotes the Euclidean gradient of a~smooth extension $\overline{f}$ of $f$ around $X$ in $\R^{N \times n}$.
The matrix operations to compute the Riemannian gradient require $\O(Nn^2)$ flops.

\subsubsubsection{Canonical metric and the Riemannian gradient}
The compact Stiefel manifold can alternatively be endowed with another metric, the so-called \textit{canonical metric}:
\begin{align} \label{sec:manifold_env:align_name:compactstiefelcanonicalmetric}
  g_c(Z_1, Z_2) &:= \tr\left(Z_1^T \left(I_N - \frac{1}{2}XX^T \right) Z_2\right) \text{ for all } Z_1, Z_2 \in T_X\St(n,N).
\end{align}
It can be shown that, given the $X_{\perp}$ dependent representations $Z_i = XW_i + X_{\perp}K_i, i=1,2$, the metric can be 
calculated as 
\begin{align*}
  g_c(Z_1,Z_2) &= \frac{1}{2} \tr(W_1^TW_2) + \tr(K_1^TK_2).
\end{align*}
Note that $\St(n,N)$ together with the canonical metric is still a~submanifold, but not a~Riemannian submanifold. 
Thus, calculating the gradient requires a~different method than simply applying the orthogonal projection. 
We calculate for $X \in \St(n,N)$ and $Z \in T_X\St(n,N)$:
\begin{align*}
  0 
  &= \langle\nabla \overline{f}(X),Z \rangle_e - \langle\grad_c f(X),Z \rangle_c \\
  &= \tr(Z^T\nabla \overline{f}(X)) - \tr\left(Z^T \left(I_N - \frac{1}{2}XX^T \right) \grad_c f(X)\right) \\
  &= \tr\left(Z^T\left(\nabla \overline{f}(X) - \left(I_N - \frac{1}{2}XX^T \right) \grad_c f(X)\right) \right).
\end{align*}
As this holds for all $Z \in T_X\St(n,N)$, there exists a~matrix $S \in \S_{sym}(n)$ such that
\begin{align*}
  \nabla \overline{f}(X) - \left(I_N - \frac{1}{2}XX^T \right) \grad_c f(X) &= XS.
\end{align*}
Thus, the Riemannian gradient with respect to the canonical metric is the unique element in $T_X\St(n,N)$ that fulfills 
\begin{align*}
  X^T \nabla \overline{f}(X) - S &= \frac{1}{2}X^T \grad_c f(X).
\end{align*}
This leads to 
\begin{align} \label{sec:manifold_env:align_name:compactstiefelcanonicalriemanniangradient}
  \grad_c f(X) = \nabla \overline{f}(X)- X\nabla \overline{f}(X)^T X.
\end{align}
The flops to compute the Riemannian gradient are $\O(Nn^2)$.

\subsubsubsection{Cayley retraction}
In Section~\ref{sec:manifolds_subsubsubsec:orthogonalgroupcayley} we used the Cayley transform to derive the Cayley 
retraction on $\Orth(N)$. We will use this transformation to retrieve a~retraction on the compact Stiefel manifold.
First, recall that $Z \mapsto \cay\left(\frac{1}{2}Z\right)$ 
defines the Cayley retraction on $\Orth(N)$ at $I_N$. 
Now, as shown in \cite[(4)-(7)]{feasiblemethodortho:wen2013}, for each $X~\in~\St(n,N)$ the function 
\begin{align} \label{sec:manifold_env:align_name:compactstiefelcayleyretraction}
  R^{\cay}_X: T_X \St(n,N) &\to \St(n,N) \\
  Z &\mapsto \cay\left(\frac{1}{2} A_{X,Z} \right) X  \nonumber
\end{align}
with 
\begin{align}\label{sec:manifold_env:align_name:compactstiefelAXZ}
 A_{X,Z} := \left( I_N - \frac{1}{2}XX^T \right) ZX^T - XZ^T \left( I_N - \frac{1}{2}XX^T \right)
\end{align}
defines a~retraction on $\St(n,N)$, called \textit{Cayley retraction on $\St(n,N)$}.
Similar to Section~\ref{sec:manifolds_subsubsubsec:orthogonalgroupcayley}, we will use the SMW formula
for a~numerically efficient computation.
Indeed, if we set
\begin{align} \label{sec:manifold_env:align_name:definitionUV}
  U:&= 
    \begin{bmatrix}
    (I_N - \frac{1}{2}XX^T)Z + \frac{1}{2}XZ^TX & -X 
    \end{bmatrix}  
    = 
    \begin{bmatrix}
    Z - \frac{1}{2}X\left(X^TZ - Z^TX\right) & -X 
    \end{bmatrix} \in \R^{N\times 2n}, \\
  V:&=
    \begin{bmatrix}
    X^T  \\
    Z^T  \\
    \end{bmatrix} \in \R^{2n \times N},
\end{align}
we obtain $A_{X,Z} = UV$. We now can apply the SMW formula. According to \eqref{sec:manifold_env:align_name:orthogonalgroupSMWformulacayleywithrightmultiplication}, we get
\begin{align}\label{sec:manifold_env:align_name:compactstiefelSMWformulacayleywithrightmultiplication}
  R_X^{\cay}(Z)
  &=\cay\left( \frac{1}{2} A_{X,Z} \right) X 
   \\ &= 
   \left(X + \frac{1}{2}U (VX) \right) + \frac{1}{2}U \left( \left( I_{2n} - \frac{1}{2}VU \right)^{-1} \left( VX + \frac{1}{2}VU(VX)  \right) \right). \nonumber
\end{align}
The Cayley retraction can be computed in $\O(Nn^2)$.

\subsubsubsection{Vector transport along Cayley} \label{sec:manifold_subsubsubsec:vectortransport}
As $\St(n,N)$ is an~embedded submanifold of the Euclidean space $\R^{N \times n}$, we can apply Lemma~\ref{sec:manifold_env:lemma_name:manifoldtheorysubvectransport} 
to obtain a~submanifold vector transport along the Cayley retraction on $\St(n,N)$. Let ${X \in \St(n,N)}$ and $Z,Y \in T_X\St(n,N)$.
A~vector transport of $Y$ along $R^{\cay}_X$ in direction $Z$ is given by
\begin{align} \label{sec:manifold_env:align_name:compactstiefelvectranssub}
    \T^{\cay,\sub}_Z(Y) 
    &= \P_{R^{\cay}_X(Z)}(Y) \\
    &= \left(I_N - R^{\cay}_X(Z)R^{\cay}_X(Z)^T\right)Y + R^{\cay}_X(Z) \skew\left(R^{\cay}_X(Z)^TY\right) \nonumber \\
    &= Y - \frac{1}{2}R^{\cay}_X(Z)\left(R^{\cay}_X(Z)^TY + Y^TR^{\cay}_X(Z)\right). \nonumber 
\end{align}
We have $\O(Nn^2)$ flops for the computation of $\T^{\cay,\sub}_Z(Y)$.

Applying Lemma~\ref{sec:manifold_env:lemma_name:manifoldtheorydiffvectransport} and \eqref{sec:manifold_env:align_name:orthogonalgroupDcay}, we can derive another vector 
transport along $R^{\cay}_X$ by differentiating the Cayley retraction. Transporting $Y \in T_X \St(n,N)$ in direction $Z$ 
(along $R^{\cay}_X$) is given by
\begin{align} \label{sec:manifold_env:align_name:compactstiefelvectransdiff}
  \T^{\cay,\diff}_Z(Y) 
  &= DR^{\cay}_X(Z)[Y] \\ 
  &= \frac{d}{dt}R^{\cay}_X(Z + tY)\bigg|_{t=0} \nonumber \\
  &= \frac{d}{dt}\cay\left( \frac{1}{2}A_{X,Z + tY}\right) X \bigg|_{t=0} \nonumber \\
  &= \frac{d}{dt}\cay\left( \frac{1}{2}A_{X,Z + tY}\right) \bigg|_{t=0} X  \nonumber \\
  &= D\cay\left(\frac{1}{2}A_{X,Z}\right) \left[\frac{d}{dt}\left(\frac{1}{2}A_{X,Z + tY}\right)\bigg|_{t=0} \right] X \nonumber \\
  &= D\cay\left(\frac{1}{2}A_{X,Z}\right) \left[\frac{1}{2}A_{X,\frac{d}{dt}Z + tY\big|_{t=0}} \right] X \nonumber \\
  &= 2\left(I_N - \frac{1}{2}A_{X,Z} \right)^{-1}\frac{1}{2}A_{X,Y} \left( I_N - \frac{1}{2}A_{X,Z} \right)^{-1} X \nonumber \\
  &= \left(I_N - \frac{1}{2}A_{X,Z} \right)^{-1}A_{X,Y} \left( I_N - \frac{1}{2}A_{X,Z} \right)^{-1} X. \nonumber
\end{align}
We use the same decomposition as in \eqref{sec:manifold_env:align_name:definitionUV} with $A_{X,Z} = UV$ and $A_{X,Y} = U_YV_Y$.
Once again applying the SMW formula yields
{\small\begin{align}\label{sec:manifold_env:align_name:compactstiefelvectransdiffSMW}
  \T^{\cay,\diff}_Z(Y) 
  &= \left(I_N + \frac{1}{2}U \left( I_{2n} - \frac{1}{2}VU \right)^{-1}V \right) U_YV_Y \left(I_N + \frac{1}{2}U \left( I_{2n} - \frac{1}{2}VU \right)^{-1}V \right)X  \\
  &=  \left(U_Y + \frac{1}{2}U \left(\left( I_{2n} - \frac{1}{2}VU \right)^{-1}(VU_Y)\right)\right)  \left(V_Y\left(X + \frac{1}{2}U \left(\left( I_{2n} - \frac{1}{2}VU \right)^{-1}(VX)\right) \right)\right). \nonumber
\end{align}}
The computational costs are again $\O(Nn^2)$.

\subsubsubsection{Homogeneous space as a~link to the orthogonal group} \label{sec:manifold_subsubsubsec:compactstiefelhomogeneousspace}
In Section~\ref{sec:manifolds_subsubsubsec:orthogonalgroupliegroup}, we have seen that the orthogonal group is a~Lie group. 
This allows us to define a~transitive smooth group action
\begin{align} \label{sec:manifold_env:align_name:compactstiefelgroupaction}
    \Orth(N) \times \St(n,N) &\to \St(n,N) \\
    (A,X) &\mapsto AX \nonumber
\end{align}
on $St(n,N)$ by simple left-multiplication of matrices in $\Orth(N)$. Recall that transitive means 
$${\Orth(N)\cdot X = \St(n,N)} \quad \text{ for all } X \in \St(n,N).$$ 
This turns the tuple $(\Orth(N),\St(n,N))$ into a~homogeneous space, see Definition~\ref{sec:manifold_env:definition_name:homogeneousspace}.
Given a~retraction $R^{\Orth(N)}_{I_N}$ on $\Orth(N)$ at the identity $I_N$, we can now construct a~retraction $R^{\St(n,N)}_X$ on $\St(n,N)$ at an~arbitrary point $X$.   
Due to Theorem~\ref{sec:manifold_env:theorem_name:derivedretraction}, we have
\begin{align} \label{sec:manifold_env:align:compactstiefelderivedretraction}
    R_X^{\St(n,N)}: T_X\St(n,N) &\to \St(n,N) \\ 
    Z &\mapsto \lambda_E(X) R_{I_N}^{\Orth(N)} \left( \lambda_E(X)^{-1} \Omega_X(Z)\lambda_E(X) \right)E. \nonumber
\end{align}
The difficulty here is to compute the lift $\Omega_X$ and to make a~reasonable choice of a~distinct element $E \in \St(n,N)$ that makes it easy to find a~section $\lambda_E$.
According to \cite{adam:brantner2023}, the lift at an~arbitrary $X \in \St(n,N)$ is given by
\begin{align} \label{sec:manifold_env:align:compactstiefellift}
\Omega_{X}(Z) = \left( I_N - \frac{1}{2}X X^T \right)ZX^T - X Z^T \left( I_N - \frac{1}{2}X X^T \right).
\end{align}
As we need to calculate the matrix-product of an~$N \times n$ and an~$n \times N$ matrix, this costs $\O(N^2n)$ flops.
Analogous to \cite{adam:brantner2023}, we choose
\begin{align} \label{sec:manifold_env:align:compactstiefeldistinctelement}
  E = \begin{bmatrix}
        I_n \\
        0 \\
      \end{bmatrix} 
      \in \R^{N \times n}
\end{align}
as our distinct element. This choice makes it quite easy to find a~section. We can simply apply an~orthogonal extension to the elements $X \in \St(n,N)$:
\begin{align} \label{sec:manifold_env:align:compactstiefelsection}
  \lambda_E(X) := [X,\bar{\lambda}] \in \Orth(N),
\end{align}
where $\bar{\lambda}$ denotes the orthogonal extension of $X$ to an~orthogonal matrix. 
To ensure that the retraction \eqref{sec:manifold_env:align:compactstiefelderivedretraction} is a~smooth map, we need to make a~smooth 
extension-choice. A~common way to do this is to calculate a~$QR$-decomposition as shown in Algorithm~\ref{sec:manifold_env:algorithm:compactstiefelalgosection}.
To our knowledge, if the matrix $A$ is not sparse, this results in costs of $\O(N(N-n)^2)$. 
\begin{algorithm}
\begin{algorithmic}[1]
  \State $A \gets \rand (N, N-n)$ {\small\Comment{Sample $A$ from a~given distribution.}}
  \State $A \gets A - XX^TA$ {\small\Comment{Remove part of $A$ that is spanned by the columns of $X$.}}
  \State $Q,R \gets \text{qr}(A)$ {\small\Comment{Apply a~$QR$ decomposition.}}
  \State $\lambda_E(X) \gets [X,Q[1:N,1:(N-n)]]$ {\small\Comment{Output $X$ and the first $(N-n)$ columns of $Q$.}}
\end{algorithmic}
    \caption{Computation of the section $X \mapsto \lambda_E(X) \in \Orth(N)$ with $QR$-decomposition similar to \cite{adam:brantner2023}.}
    \label{sec:manifold_env:algorithm:compactstiefelalgosection}
\end{algorithm}
The product 
\begin{align} \label{sec:manifold_env:align_name:compactstiefellift+sectionproduct}
 \lambda_E(X)^{-1} \Omega_X(Z)\lambda_E(X) 
 &=
         \begin{bmatrix}
          X^TZ & -Z^T\bar{\lambda} \\
          \bar{\lambda}^TZ & 0 \\
         \end{bmatrix}
\end{align}
does not require the inverse $\lambda_E(X)^{-1}$. But, as it includes the multiplication of an~$(N-n)\times N$ matrix with an~$N \times n$ matrix,
it causes costs of $\O(N(N-n)n)$. 
The matrix in \eqref{sec:manifold_env:align_name:compactstiefellift+sectionproduct} has rank $n$.  
If we choose $R_{I_N}^{\Orth(N)} := R^{\cay}_{I_N}$ as the Cayley retraction \eqref{sec:manifold_env:align_name:orthogonalgroupcayleyretraction}, we can compute 
\begin{align} \label{sec:manifold_env:align_name:compactstiefelhomogeneouscayleyE}
  R_{I_N}^{\cay} (Z)E
\end{align}
in $\O(Nn^2)$ as shown in \eqref{sec:manifold_env:align_name:orthogonalgroupSMWformulacayleywithrightmultiplication}.

Both the lift and the section remain the numerically expensive parts of this derived retraction. 
As long as there is no way to make these computations cheaper (i.e. reducing the computational costs to $\O(Nn^2)$), both the calculation of $\Omega_X$ and $\lambda_E$ 
are critical in terms of numerical efficiency.
As we will see later, this will cause performance problems for applications with a~large dimension~$N$ and a~small dimension $n \ll N$.

\subsubsubsection{Overview}
We summarize all the important results concerning the compact Stiefel manifold in Table~\ref{sec:manifold_env:table_name:compactstiefeltable}. 
Note that we did not always display the most efficient representations, and therefore they may not match the computational cost shown.
\begin{table}[ht!]
\centering
\begin{tabular}{||c c c||} 
 \hline
 Name & Formula & Costs \\ [0.5ex] 
 \hline\hline
 $T_X\St(n,N)$          & $\{XW + X_{\perp}K \; \big| \; W \in \S_{\skew}(n), K \in \R^{(N-n) \times n}\}$                                                   & ---                             \\
 $g_e(Z_1,Z_2)$         & $\tr(Z_1^T Z_2)$                                                                                                         & $\O(Nn)$                            \\
 $\P_X(Y)$                & $(I_N - XX^T)Y + X \skew(X^TY)$                                                                                          & $\O(Nn^2)$                      \\
 $\P^{\perp}_X(Y)$        & $X\sym(X^TY)$                                                                                                            & $\O(Nn^2)$                      \\
 $\grad_e f(X)$         & $\nabla \overline{f}(X) - \frac{1}{2} X\left(X^T\nabla \overline{f}(X) + \nabla \overline{f}(X)^TX\right)$   & $\O(Nn^2)$                      \\
 $g_c(Z_1,Z_2)$         & $\tr\left(Z_1^T \left(I_N - \frac{1}{2}XX^T \right) Z_2\right)$                                                          & $\O(Nn^2)$                             \\
 $\grad_c f(X)$         & $\nabla \overline{f}(X)- X\nabla \overline{f}(X)^T X$                                                                    & $\O(Nn^2)$                      \\
 $R^{\cay}_{X}(Z)$      & $\left(I_N-\frac{1}{2}A_{X,Z}\right)^{-1}\left(I_N+\frac{1}{2}A_{X,Z}\right)X$                                           & $\O(Nn^2)$                      \\
 $\T^{\cay,\sub}_Z(Y)$  & $Y - \frac{1}{2}R^{\cay}_X(Z)\left(R^{\cay}_X(Z)^TY + Y^TR^{\cay}_X(Z)\right)$                                           & $\O(Nn^2)$                      \\
 $\T^{\cay,\diff}_Z(Y)$ & $\left(I_N - \frac{1}{2}A_{X,Z} \right)^{-1}A_{X,Y} \left( I_N - \frac{1}{2}A_{X,Z} \right)^{-1} X$                      & $\O(Nn^2)$                      \\
 $\Omega_{X}(Z)$        & $\left( I_N - \frac{1}{2}X X^T \right)ZX^T - X Z^T \left( I_N - \frac{1}{2}X X^T \right)$                                & $\O(N^2n)$                      \\
 $\lambda_E(X)$         & $[X,\bar{\lambda}]$                                                                                                      & $\O(N(N-n)^2)$                  \\
 $R_X^{\St(n,N)}$       & $\lambda_E(X) R_{I_N}^{\Orth(N)} \left( \lambda_E(X)^{-1} \Omega_X(Z)\lambda_E(X) \right)E$                              & $\O(N(N-n)^2 + N^2n)$       \\
 \hline
\end{tabular}
\caption{Geometric concepts for the compact Stiefel manifold with $A_{X,Z}, A_{X,Y}$ defined in \eqref{sec:manifold_env:align_name:compactstiefelAXZ}.}
\label{sec:manifold_env:table_name:compactstiefeltable}
\end{table}

\subsubsection{Symplectic Stiefel manifold}\label{sec:manifolds_subsubsec:symplecticstiefel}

Another manifold that is closely related to the compact Stiefel manifold is the set of all \textit{symplectic} matrices
\begin{align} \label{sec:manifolds_env:align_name:setofsymplecticstiefelmatrices}
  \Sp(2n,2N) &:= \{X \in \R^{2N \times 2n} \; \big| \; X^T J_{2N}X = J_{2n} \},
\end{align}
where $J_{2N} = 
\left[
\begin{smallmatrix}
  0 & I_N \\
  -I_N & 0 \\
\end{smallmatrix}
\right]$
is the so-called \textit{Poisson matrix}.
Similar to Section~\ref{sec:manifolds_subsubsec:orthogonalgroup}, we use the submersion theorem on the map 
\begin{align*}
  F: \R^{2N \times 2n} &\to \S_{\skew(2n)} \\ 
  X &\mapsto X^TJ_{2N}X
\end{align*}
to show that $\Sp(2n,2N) = F^{-1}(J_{2n})$ is a~closed submanifold of $\R^{2N \times 2n}$. It is called the \textit{symplectic Stiefel manifold}. 
Unlike the compact Stiefel manifold, the symplectic Stiefel manifold is not compact and thus, not bounded. 
This leads to an~increase of complexity in manifold optimization as we have an~unbounded search space.
We can turn $\Sp(2n,2N)$ into a~Riemannian submanifold using the Euclidean metric. Similar to the compact Stiefel manifold, we can also define an~alternative 
metric called the \textit{canonical-like metric}.
We will not go into this in detail here. A~good overview of the most important results can be found in \cite{symplstiefel:stykel2021}. 

\newpage
\subsubsubsection{A compact subset isomorphic to the compact Stiefel manifold}\label{sec:manifold_subsubsubsec:immersionofcompactstiefelinsymplectic}
Finally, we discuss the relation between the compact Stiefel manifold $\St(n,N)$ and the symplectic Stiefel manifold $\Sp(2n,2N)$.
For this, we define the map
\begin{align} \label{sec:manifold_env:align_name:symplecticstiefelimmersionofcompactstiefel}
 \iota: \St(n,N) &\to \Sp(2n,2N) \\
 X &\mapsto 
 \begin{bmatrix}
  X & 0 \\
  0 & X \\
 \end{bmatrix}. \nonumber
\end{align}
Simple calculation shows that 
$\left[
\begin{smallmatrix}
  X & 0 \\
  0 & X \\
\end{smallmatrix}
\right]$
is indeed symplectic and thus $\iota$ is well-defined. It follows from the definition that $\iota$ is injective. Thus, we have some kind of inclusion of 
$\St(n,N)$ in $\Sp(2n,2N)$. As $\iota$ is a~linear map (viewed as a~map of the embedding spaces), it is a~smooth map and even more an~\textit{immersion} 
between manifolds. 
This means, we can identify the compact Stiefel manifold as some immersed (and compact) subset of the symplectic Stiefel manifold. 
This is especially useful for the optimization on $\Sp(2n,2N)$. As we have an~unbounded manifold, we have the risk of drifting too far from 
the optimal solution. Instead of searching the whole symplectic space, we can apply the optimization search on the compact immersion $\St(n,N)$.
Note that we also have 
$\left[\begin{smallmatrix}
 X & 0 \\
 0 & X \\ 
\end{smallmatrix}\right] \in \St(2n,2N)$
and thus $\im(\iota) \subseteq \St(2n, 2N) \cap \Sp(2n,2N)$.
We will make use of these correlations in later sections.

\newpage
\section{Model reduction of Hamiltonian systems by a~symplectic autoencoder} \label{sec:modelreduction}

In physics, we often have to solve \textit{partial differential equations} (PDEs). 
A~common approach to solve these systems is to reduce the problem to an~\textit{ordinary differential equation} (ODE) by using a~spatial discretization. 
One problem that commonly arises here is that the resulting ODE often is of very high dimension. 
This makes it numerically expensive to solve. 
There are various approaches to deal with this issue. 

In this section, we examine so-called \textit{Hamiltonian systems}, and we present a~method that allows us to solve these systems efficiently 
even for high dimensions.
First, in Section~\ref{sec:application_subsec:problemsetup}, we consider a~solution method for Hamiltonian systems that 
reduces the system dimension using encoder and decoder functions while preserving the structure of the original system.
In Section~\ref{sec:application_subsec:symplecticautoencodernetwork}, we 
present a~\textit{symplectic autoencoder network} to determine such a~pair of encoder and decoder functions.
To begin, we describe the exact network setup as it was introduced by \cite{main:brantner2023}. Then we discuss a~self-created 
modification of the update step.
Finally, in Section~\ref{sec:application_subsec:implementation}, we show different learning setups of the network that we 
have implemented in the Julia programming language.

\subsection{Structure-preserving model reduction of Hamiltonian systems} \label{sec:application_subsec:problemsetup}

The goal of this section is to find a~numerically efficient method to solve high dimensional \textit{Hamiltonian systems}.
A~Hamiltonian system is an~autonomous, i.e. time-independent, ordinary differential equation of a~specific form:
\begin{definition} \label{sec:application_env:definition_name:hamiltoniansystem}
  Let $U\subseteq \R^{2N}$ be an~open set and $f:U \to \R^{2N}$ a~Lipschitz-continuous function. 
  The autonomous system $\dot{x} = f(x)$ is called \textit{Hamiltonian system} 
  if there exists a~$C^1$-function $\H:U \to \R$ such that for all $(x_1,x_2) \in U$, the following holds
  \begin{itemize}
    \item[(i)] $\nabla_{x_1}\H(x_1,x_2) = -f_2(x_1,x_2)$,
    \item[(ii)] $\nabla_{x_2}\H(x_1,x_2) = f_1(x_1,x_2)$.
  \end{itemize}
  We call $\H$ the \textit{Hamiltonian function}.
\end{definition}
\noindent Using the Poisson matrix
$ J_{2N}:=\left[
\begin{smallmatrix}
  0 & I_N \\
  -I_N & 0 \\
\end{smallmatrix}
\right]$,
we can write the Hamiltonian system as 
$${\dot{x} = J_{2N} \nabla \H(x)}.$$ 
A~Hamiltonian system preserves energy, meaning that its \textit{flow} $\phi_t(x_0) := x(t,x_0)$ remains unchanged under the Hamiltonian $\H$: 
\begin{align} \label{sec:application_env:align_name:hamiltonianenergypreservation}
  \frac{d}{dt} \H(\phi_t(x_0)) 
  &= \nabla \H(\phi_t(x_0))^T \frac{d}{dt}\phi_t(x_0) \\
  &= \nabla \H(\phi_t(x_0))^T J_{2N} \nabla \H(\phi_t(x_0)) \nonumber\\
  &= 0. \nonumber
\end{align}
%Another important property is the symplecticity of the flow:
%\begin{align}
%  (\nabla_x \phi_t(x_0))^T J_{2N} \nabla_x \phi_t(x_0)
%  &= J_{2N}.
%\end{align}
As already mentioned, the dimension $2N$ of a~Hamiltonian system can be very large, leading to high computational costs. We now derive 
a \textit{model reduction} to obtain an~ODE of lower dimension that preserves most of the structure of the full model. 
Then we solve the reduced system and use this solution to reconstruct an~approximation to the full model.
We proceed similarly to \cite{weaklysymp:buchfink2021}. 

\subsubsection{From FOM to ROM} \label{sec:appliation_subsbubsubsec:fromFOMtoROM}
First, we define the \textit{full order model} (FOM) 
\begin{align}\label{sec:application_env:align_name:FOM}
  \dot{x}_{\mu} &= J_{2N} \nabla_{x}\H (x_{\mu}; \mu) \\ 
  x_\mu(0) &= x_{\mu}^0 \nonumber
\end{align}
as a~Hamiltonian system with a~variable system parameter $\mu \in \R^p$ and an~initial condition $x_{\mu}^0$. 
This is the system of high dimension $2N \in \N$ that we want to solve efficiently.
Next, we want to derive a~reduced ODE of dimension $2n$ with $n \ll N$. 
To perform the reduction (and reconstruction) we need
\begin{itemize}
  \item an~encoder $e: \R^{2N} \to \R^{2n}$ from the full dimension to the reduced dimension and
  \item a~decoder $d: \R^{2n} \to \R^{2N}$ from the reduced dimension to the full dimension.
\end{itemize}
Our goal is to get a~reduced system
\begin{itemize}
  \item that is a~Hamiltonian system itself, i.e., that has the form $\dot{x}_{r,\mu} = J_{2n}\nabla_{x_r}\H_{r} (x_{r,\mu}; \mu) $, 
  \item whose reconstructed solution approximates that of the FOM, \\ i.e., $x_{\mu}(t) \approx \tilde{x}_{\mu}(t) := x_{\reff,\mu} + d(x_{r,\mu}(t))$ for a~\textit{reference state} $x_{\reff,\mu} \in \R^{2N}$, and
  \item whose initial value is reconstructed exactly, i.e., $x_{\mu}^0 = x_{\reff,\mu} + d(x_{r,\mu}^0)$.
\end{itemize} 
According to our conditions, we define the \textit{reduced order model} (ROM) as
\begin{align}\label{sec:application_env:align_name:ROM}
  \dot{x}_{r,\mu} &= J_{2n}\nabla_{x_r}\H_{r} (x_{r,\mu}; \mu) \\ 
  x_{r,\mu}(0) &= x_{r,\mu}^0 \nonumber
\end{align}
with 
\begin{itemize}
  \item[(i)] $\H_{r} (x_{r,\mu}; \mu) := \H(x_{\reff,\mu} + d(x_{r,\mu}); \mu)$,
  \item[(ii)] $x_{r,\mu}^0 := y$, where $y \in \R^{2n}$ can be chosen arbitrary, and
  \item[(iii)] $x_{\reff,\mu} := x_{\mu}^0 - d(x_{r,\mu}^0)$. 
\end{itemize}
This choice of the ROM ensures an~exact reconstruction of the initial value. Note that a~good choice of the initial value $x_{r,\mu}^0$
depends on the chosen encoder-decoder pair. Since both the FOM and the ROM preserve energy, we can conclude 
from \eqref{sec:application_env:align_name:hamiltonianenergypreservation} that the error in the Hamiltonian 
\begin{align*}
  \H(x_{\mu}(t); \mu) - \H(\tilde{x}_{\mu}(t); \mu) &\equiv \H(x_{\mu}(0); \mu) - \H(\tilde{x}_{\mu}(0); \mu) = \H(x_{\mu}^0; \mu) - \H(x_{\mu}^0; \mu)
\end{align*}
vanishes for all $t$. Algorithm~\ref{sec:application_env:algorithm_name:FOMtoROM} displays a~high level overview of all steps 
required to determine an~approximate solution of the FOM by a~symplectic model order reduction.
\begin{algorithm}
  \textbf{Input:} FOM $\quad \dot{x}_{\mu} = J_{2N} \nabla_{x}\H (x_{\mu}; \mu), \quad x_\mu(0) = x_{\mu}^0.$ \\
  \textbf{Output:} Approximate solution $\tilde{x}_{\mu}(t)$ of the FOM. \\
  \textbf{Steps:}
\begin{algorithmic}[1]
  \State Determine an encoder-decoder pair $e:\R^{2N} \to \R^{2n}$ and $d:\R^{2n} \to \R^{2N}$.
  \State Choose an~initial value for the ROM $x_{r,\mu}^0 := y \in \R^{2n}$.
  \State Compute the reference state $x_{\reff,\mu} = x_{\mu}^0 - d(x_{r,\mu}^0) = x_{\mu}^0 - d(y)$.
  \State Compute the Jacobian $\nabla_{x_r} \H_{r} (x_{r,\mu}; \mu) = (D_{x_{r,\mu}}d(x_{r,\mu}))^T \nabla_x \H(x_{\reff,\mu} + d(x_{r,\mu}); \mu)$ of the reduced Hamiltonian.
  \State Solve the ROM $\quad \dot{x}_{r,\mu} = J_{2n}\nabla_{x_r}\H_{r} (x_{r,\mu}; \mu), \quad x_{r,\mu}(0) = x_{r,\mu}^0 = y$.
  \State Reconstruct an~approximate solution of the FOM $x_{\mu}(t) \approx \tilde{x}_{\mu}(t) = x_{\reff,\mu} + d(x_{r,\mu}(t))$.
\end{algorithmic}
    \caption{High level steps to solve a~Hamiltonian system by symplectic model reduction.}
    \label{sec:application_env:algorithm_name:FOMtoROM}
\end{algorithm}

\subsubsection{Choice of an~encoder-decoder pair}
So far we get an~exact reconstruction of the initial value. 
However, we still do not know how good the reconstruction of the whole solution is.
This heavily depends on the choice of the encoder and decoder functions.
The \textit{residual} 
\begin{align*}
  r(t) 
  :&= \dot{\tilde{x}}_{\mu}(t) - J_{2N} \nabla_x \H(\tilde{x}_{\mu}(t); \mu) \\
  &= D_{x_{r,\mu}}d(x_{r,\mu}(t)) \dot{x}_{r,\mu}(t) - J_{2N}\nabla_x \H(x_{\reff,\mu} + d(x_{r,\mu}); \mu) 
\end{align*}
is a~measure of the quality of our approximation. We cannot make it vanish, as the dimension reduction necessarily causes a~loss of information.
But if we assume that the decoder $d$ is a~symplectic function, the symplectic projection of the residual vanishes:
\begin{align*}
  (D_{x_{r,\mu}}d(x_{r,\mu}(t)))^{+}r(t) 
  &= \dot{x}_{r,\mu}(t) - J_{2n}(D_{x_{r,\mu}}d(x_{r,\mu}(t)))^TJ_{2N}^TJ_{2N}\nabla_x \H(x_{\reff,\mu} + d(x_{r,\mu}); \mu) \\
  &= \dot{x}_{r,\mu} - J_{2n} \nabla_{x_r}\H_{r} (x_{r,\mu}; \mu) \nonumber \\ 
  &= 0. \nonumber
\end{align*}
Here, we used that the symplectic inverse $(D_{x_{r,\mu}}d(x_{r,\mu}(t)))^{+}$ equals $J_{2n}(D_{x_{r,\mu}}d(x_{r,\mu}(t)))^TJ_{2N}^T$.
This indicates that symplectic encoder-decoder pairs are a~good choice. 

To get a~good approximation of the FOM, applying the composition of the encoder and decoder should roughly deliver the identity
\begin{align*}
  d(e(x_{\mu}(t))) &\approx x_{\mu}(t)
\end{align*}
for all solutions $x_{\mu}(t)$. Thus, we want to minimize the error
\begin{align}\label{sec:application_env:align_name:errorfunction}
  \L(e,d) &= \frac{1}{2N|\X|} \sum_{x \in \X} \norm{x - d(e(x))}^2,
\end{align}
where $\X$ is a~finite set of solution snapshots and $\norm{\cdot}$ is any norm on $\R^{2N}$.
We scale it by the inverse of the system dimension $2N$ and the number of snapshots $|\X|$.
In summary, the general solution to the model reduction problem can be determined by solving the optimization problem
\begin{align}\label{sec:application_env:align_name:encoderdecoderminprob}
  \min \quad &\L(e,d) \\
  \st \quad &e,d \text{ symplectic, i.e. } \nonumber \\ 
  & D_{x_{r,\mu}}d(x_{r,\mu})^T J_{2N} D_{x_{r,\mu}}d(x_{r,\mu}) = J_{2n} \nonumber \\
  & D_{x_{\mu}}e(x_{\mu}) J_{2N} D_{x_{r,\mu}}e(x_{\mu})^T = J_{2n}. \nonumber
\end{align}

\subsubsection{Approaches to solve the optimization problem} \label{sec:application_subsubsubsec:approachestosolvetheoptprob}
Given a~finite set of snapshots $\X$, \eqref{sec:application_env:align_name:encoderdecoderminprob} is a~well-defined optimization problem.
In practice, however, it is often not possible to solve the problem exactly. 
Usually, the set of all admissible functions is further restricted.

The simplest approach for computing an~encoder-decoder pair is 
to find the best linear solution of \eqref{sec:application_env:align_name:encoderdecoderminprob} for a~given snapshot matrix $M \in \R^{2N \times k}$. 
We can formulate this problem as
\begin{align}\label{sec:application_env:align_name:PSDProblem}
  \underset{A \in \Sp(2n,2N)}{\min} \quad &\L(A^{+},A) =  \norm{M - AA^{+}M}_F.
\end{align} 
Due to the unboundness and non-convexity of $\Sp(2n,2N)$, this is still considered to be difficult to solve, see \cite{symplstiefel:stykel2021} for details. We can simplify things 
by further restricting our search space to a~bounded set. As discussed in Section~\ref{sec:manifold_subsubsubsec:immersionofcompactstiefelinsymplectic}, we reduce it to 
$A = \left[\begin{smallmatrix}
 X & 0 \\
 0 & X \\ 
\end{smallmatrix}\right] \in
\St(2n,2N) \cap \Sp(2n,2N)$ 
with ${X \in \St(n,N)}$ an~element of the compact Stiefel manifold.
There are different methods to calculate a~suboptimal solution for this.
We apply the \textit{proper symplectic decomposition} (PSD) method. The solution to this is displayed in Algorithm~\ref{sec:application_env:algorithm_name:PSDsolution}.
Using the optimal solution $A$ of the PSD method, we can define $e:= A^{+}$ and $d:=A$ as the encoder-decoder pair for the
model order reduction in Algorithm~\ref{sec:application_env:algorithm_name:FOMtoROM}.
A~good choice for the initial value of the ROM would be $x_{r,\mu}^0 := e(x_{\mu}^0)$.
\begin{algorithm}
\begin{algorithmic}[1]
  \State $M = 
  \left[\begin{smallmatrix}
    M_1 \\
    M_2 \\
  \end{smallmatrix}\right]
  \in \R^{2N \times k} 
  \mapsto
  M_{\psd} := [M_1,M_2] \in \R^{N \times 2k}$. {\small\Comment{Rearrange $M$.}}
  \State $M_{\psd} = U\Sigma V^T$ {\small\Comment{Compute SVD of $M_{\psd}$.}}
  \State $X:= U[N,1:n]$ {\small\Comment{Extract the first $n$ columns of $U$.}}
  \State $A := 
  \left[\begin{smallmatrix}
    X & 0 \\
    0 & X \\
  \end{smallmatrix}\right]$ {\small\Comment{Return the cotangent lift constructed from $M$.}}
\end{algorithmic}
    \caption{PSD method for a~snapshot matrix $M \in \R^{2N \times k}$.}
    \label{sec:application_env:algorithm_name:PSDsolution}
\end{algorithm}

One problem with \eqref{sec:application_env:align_name:PSDProblem} is that too much information may be lost due to the restriction to linear encoder-decoder maps, 
which prevents a~good approximation. 
Therefore, in recent years, different methods have been developed to find a~nonlinear (suboptimal) solution for 
\eqref{sec:application_env:align_name:encoderdecoderminprob}. 
Here, we present a~method that uses an~artificial neural network (ANN) to learn both the encoder and the decoder while 
enforcing symplecticity constraints in every update step.
More precisely, we learn a~symplectic \textit{deep convolutional autoencoder} (DCA) network. A~DCA $\A :=(e(\cdot;\theta), d(\cdot;\theta))$ consists 
of an~encoder $e(\cdot;\theta)$ that converts the input values and a~decoder $d(\cdot;\theta)$ that reconstructs the input values. 
The functions are parametrized by \textit{weights} $\theta$ that are updated in the learning process. 
The \textit{target function} is designed to learn the identity map $d \circ e \equiv \id$ on some \textit{training set} $\X$ while being restricted to conditions that only allow 
an~approximate reconstruction. Since the target values are given by the inputs, we speak of \textit{unsupervised learning}. 
In accordance with \eqref{sec:application_env:align_name:errorfunction}, we choose the \textit{loss function} for a~given 
target function $f$ and a~batch of training data $\X$ as 
\begin{align}\label{sec:application_env:align_name:nnlossfunction}
  \loss(\X, f) := \frac{1}{2N|\X|}\sum_{x \in \X}\norm{x - f(x)}^2_2.
\end{align}
Given a~matrix \textit{batch} $\X_b \in \R^{2N \times k}$ of $k$ snapshots and 
a target function output matrix $\Y_b := f(\X_b) \in \R^{2N \times k}$, the loss function turns into 
\begin{align}\label{sec:application_env:align_name:nnlossfunctionformatrixbatch}
  \loss(\X_b, \Y_b) 
  &= \frac{1}{2N|\X_b|}\sum_{x \in \X_b}\norm{x - f(x)}^2_2 \\
  &= \frac{1}{2Nk} \norm{\X_b - \Y_b}_F^2. \nonumber
\end{align}
Thus, the loss function is computed as the Frobenius norm distance of the input and the output normalized by the system dimension.
In our case, the \textit{error function} for the minimization problem \eqref{sec:application_env:align_name:encoderdecoderminprob} is
given as the composition of the target function $f = d \circ e$ and the loss function. For a~snapshot batch $\X_b$ this is 
\begin{align}\label{sec:application_env:align_name:nnerrorfunction}
  %\L(\theta) := \frac{1}{2N|\X|}\sum_{x \in \X}\norm{x - d(e(x;\theta);\theta)}^2.
  \L(\theta,\X_b) := \frac{1}{2N|\X_b|}\norm{\X_b - d(e(\X_b;\theta);\theta)}_F^2.
\end{align}
According to \cite{weaklysymp:buchfink2021} the training data should be normalized. Thus, we choose the training set as a~set of normalized snapshots 
\begin{align}\label{sec:application_env:align_name:trainingset}
   \X := \X(\P_{\text{train}}) := \left\{ x^k_\mu - x^0_\mu \; \big| \; 0 \leq k \leq K, \mu \in \P_{\text{train}} \right\},
\end{align}
where $K \in \N$ denotes the number of discrete time steps chosen and $\P_{\text{train}}$ a~selection of variable system parameter values~$\mu$.
For a~given time interval $\I=[t_0,t_1]$, $x^k_\mu := x(t_0 + k\tfrac{t_1-t_0}{K};\mu)$ denotes the exact solution 
of the FOM at time $t_k=t_0 + k\tfrac{t_1-t_0}{K}$ for a~chosen system parameter $\mu \in \P_{\text{train}}$.
Since we use normalized data and thus $0 \in \X$, we train the target function to get $(d\circ e)(0) \approx 0$. This makes 
$x_{r,\mu}^0:= e(0)$ a~good choice for the initial value of the ROM when solving the FOM with Algorithm~\ref{sec:application_env:algorithm_name:FOMtoROM}.
To ensure the symplecticity of the encoder-decoder, the network layers and update steps must be chosen wisely. In the next section, we will discuss such a~network setup in detail.

\subsection{A symplectic autoencoder network}\label{sec:application_subsec:symplecticautoencodernetwork}

In this section, we discuss the construction of a~symplectic autoencoder network to obtain an~encoder-decoder pair 
for a~suboptimal solution of the minimization problem \eqref{sec:application_env:align_name:encoderdecoderminprob}.
The network we present was first introduced in \cite{main:brantner2023}. 
We describe the exact architecture and the individual layers of this network. We focus, in particular, on the required manifold optimization steps. 
Furthermore, we discuss some network modifications we carried out to improve the performance of the network. 
In particular, we introduce a~modified Adam optimizer to work on the compact Stiefel manifold instead of the homogeneous space as proposed in \cite{main:brantner2023}.

\subsubsection{General network design}

The general goal of our autoencoder network is to optimize the network parameters $\theta=(\theta_1,...,\theta_l)$ (meaning the tuple of all layer specific parameters $\theta_i$) 
so that in the end, we have a~pair of symplectic encoder and decoder whose composition $d \circ e$ approximates the identity map for the elements of the 
training set $\X$. This requires an~update mechanism. 
One update step of an~autoencoder network consists of four main stages:
\begin{itemize}
  \item[1.] Choose a~snapshot batch $\X_b \subseteq \X$.
  \item[2.] Calculate the output $\L(\theta,\X_b)$ of the error function (\textit{forward propagation}).
  \item[3.] Calculate the gradient $\nabla_{\theta}\L(\theta,\X_b)$ of the error function with respect to the network 
  parameters $\theta$ (\textit{backward propagation}). Here, it is important to store each of the \enquote{sub-gradients} $\nabla_{\theta_i}\L(\theta,\X_b)$ 
  of the layer-specific parameters $\theta_i$ of $\theta$. 
  \item[4.] Use the layer-specific \enquote{sub-gradients} $\nabla_{\theta_i}\L(\theta,\X_b)$ to perform the parameter update 
  for each layer~$i$.
\end{itemize}
The main idea of this update mechanism is the same as for the gradient descent method described in Section~\ref{sec:manifolds_subsec:optimizationtheory}: 
Use the negative gradient as the steepest descent direction to perform an~update step on the error function $\L$. 

The first three steps are very similar for most common neural networks. 
There are optimized standard mechanisms for an~efficient calculation, particularly for the forward and backward propagation steps.
Step four is more interesting. The update mechanism heavily depends on the architecture of the network layers. In our case, we must 
design the layers and the update step in a~way that ensures symplecticity for our target function. 
Next, we describe such an~architecture and how the update step for each layer is performed.

\subsubsection{Architecture}\label{sec:application_subsubsec:architecture}
The network introduced in \cite{main:brantner2023} consists of a~combination of two main layer types: \textit{GradientLayer} and \textit{PSDLayer}. 
The single network layers are chained to build the whole network. 
First, four full-dimensional GradientLayers are applied, followed by a~reduction layer realized by one PSDLayer. Together they build the encoder $e(\cdot;\theta)$. 
The decoder $d(\cdot;\theta)$ consists of two low-dimensional GradientLayers, followed by the reconstruction layer again realized 
by one PSDLayer and finally one more GradientLayer. 
As we will see, both the GradientLayer and the PSDLayer consist of symplectic functions. 
Since the composition of symplectic functions is again symplectic, we get a~network that learns a~strictly symplectic function. The difficulty 
lies in designing the parameter update in a~way that preserves the symplecticity of the layer functions. 
The network setup is displayed in Figure~\ref{sec:application_env:tikzpicture_name:network}. Next, we discuss the setup of each layer type.
%\begin{figure}[ht!] % h = Here, t = Top, b = Bottom, p = same Page, ! = DO AS I SAY
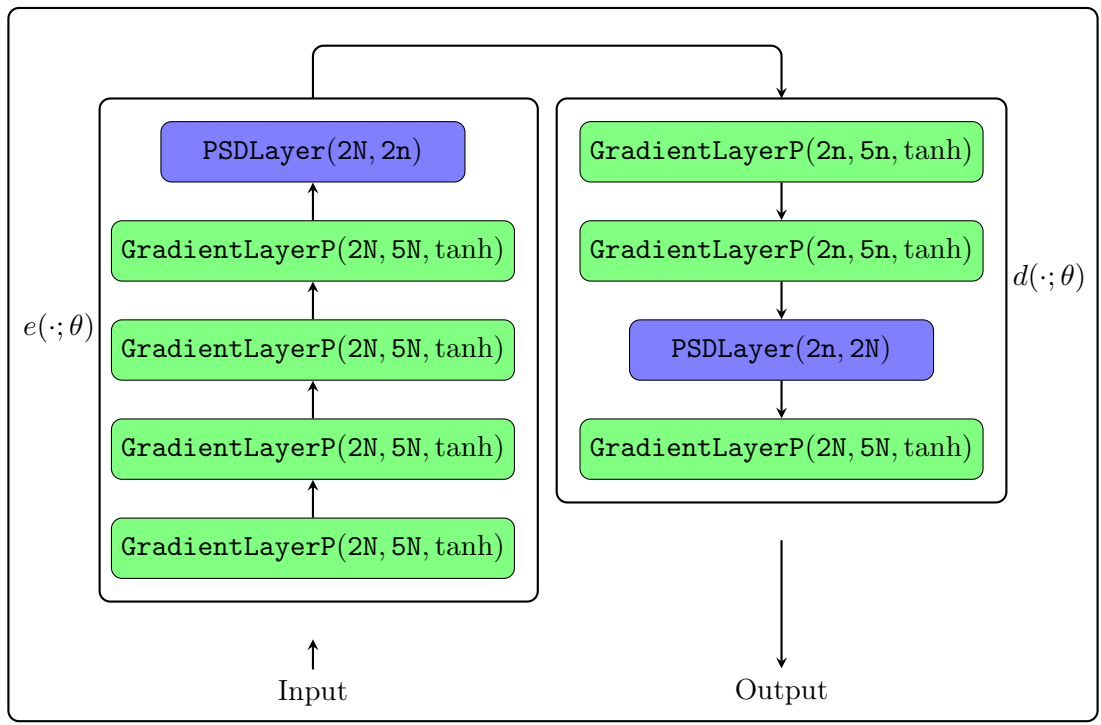
\begin{figure}[ht]
    \centering
    \begin{tikzpicture}[scale=1]
        % Define the style for the buttons
        \tikzstyle{button} = [rectangle, rounded corners, minimum width=4cm, minimum height=0.8cm, text centered, draw=black, fill=green!50]
        \tikzstyle{bluebutton} = [rectangle, rounded corners, minimum width=4cm, minimum height=0.8cm, text centered, draw=black, fill=blue!50]
        % Define the style for the arrows
        \tikzstyle{arrow} = [thick, ->, >=stealth]
        % Draw the buttons for the first column
        \node[bluebutton] (btn1) {$\tt PSDLayer(2N,2n)$};
        \node[button, below=0.5cm of btn1] (btn2) {$\tt GradientLayerP(2N,5N,\tanh)$};
        \node[button, below=0.5cm of btn2] (btn3) {$\tt GradientLayerP(2N,5N,\tanh)$};
        \node[button, below=0.5cm of btn3] (btn4) {$\tt GradientLayerP(2N,5N,\tanh)$};
        \node[button, below=0.5cm of btn4] (btn5) {$\tt GradientLayerP(2N,5N,\tanh)$};
        % Draw the buttons for the second column
        \node[button, right=1.5cm of btn1] (btn6) {$\tt GradientLayerP(2n,5n,\tanh)$};
        \node[button, below=0.5cm of btn6] (btn7) {$\tt GradientLayerP(2n,5n,\tanh)$};
        \node[bluebutton, below=0.5cm of btn7] (btn8) {$\tt PSDLayer(2n,2N)$};
        \node[button, below=0.5cm of btn8] (btn9) {$\tt GradientLayerP(2N,5N,\tanh)$};
        % Draw the arrows in the first column (bottom to top)
        \foreach \i in {5,4,3,2} {
            \draw[arrow] (btn\i.north) -- (btn\the\numexpr\i-1.south);
        }
        % Draw the arrows in the second column (top to bottom)
        \foreach \i in {6,7,8} {
            \draw[arrow] (btn\i.south) -- (btn\the\numexpr\i+1.north);
        }
        % Draw the borders around each column
        \draw[rounded corners, thick] ([shift={(-0.8,0.3)}]btn1.north west) rectangle ([shift={(0.3,-0.3)}]btn5.south east) node[midway, left=2.8cm, shift={(0, 0.3)}] {$e(\cdot;\theta)$};
        \draw[rounded corners, thick] ([shift={(-0.3,0.3)}]btn6.north west) rectangle ([shift={(0.3,-0.3)}]btn9.south east) node[midway, right=2.9cm, shift={(0, 0.3)}] {$d(\cdot;\theta)$};
        % Draw the border around the entire figure
        \draw[rounded corners, thick] ([shift={(-2,1.5)}]btn1.north west) rectangle ([shift={(1.5,-3.2)}]btn9.south east);
        % Draw the input and output arrows
        \node[below=1.5cm of btn5, shift={(0, 0.3)}] (input) {Input};
        \node[below=2.8cm of btn9, shift={(0, 0.3)}] (output) {Output};
        \draw[arrow] (input) -- ([shift={(0,-0.8)}]btn5.south);
        \draw[arrow] ([shift={(0,-0.8)}]btn9.south) -- (output);
        % Draw the complex arrow from the top of the first column to the top of the second column
        \draw[arrow, rounded corners] ([shift={(0,0.3)}]btn1.north) -- ++(0,0.7) -| ([xshift=0cm, yshift=0.7cm]btn6.north) -- ([shift={(0,0.3)}]btn6.north);
    \end{tikzpicture}
    \caption{Schematic depiction of the network architecture taken from \cite[p.17]{main:brantner2023}.}
    \label{sec:application_env:tikzpicture_name:network}
\end{figure}

\newpage
\subsubsubsection{GradientLayer} \label{sec:manifold_subsubsubsec:gradientlayer}
GradientLayers are dimension-preserving symplectic maps first introduced in \cite{sympnets:pengzhan2020} and can take one of the 
following two forms:
\begin{align}
{\tt GradientLayerP(2N,L,\sigma)}: \R^{2N} &\to \R^{2N} \label{sec:application_env:align_name:functiongradientlayerP} \\ 
\begin{bmatrix}
    q \\
    p \\
\end{bmatrix}
&\mapsto
\begin{bmatrix}
    q \\
    K^T \diag(a) \sigma(Kq + b) + p \nonumber
\end{bmatrix}, \\
{\tt GradientLayerQ(2N,L,\sigma)}: \R^{2N} &\to \R^{2N} \label{sec:application_env:align_name:functiongradientlayerQ} \\ 
\begin{bmatrix}
    q \\
    p \\
\end{bmatrix}
&\mapsto
\begin{bmatrix}
    K^T \diag(a) \sigma(Kp + b) + q \\
    p \\
\end{bmatrix}, \nonumber
\end{align}
where $a,b \in \R^L$ and $K \in \R^{L \times N}$ can be chosen arbitrarily. The parameter $L \gg N$ is referred to as
the \textit{upscaling parameter}. These layer types are somewhat \enquote{half fully connected}: For the \stexttt{GradientLayerP} all inputs $x=[q^T,p^T]^T$ 
determine the output of the lower vector half $p$, while the upper vector half $q$ is only affected by one input parameter (outputting the identity).
For \stexttt{GradientLayerQ}, it is the other way round.
The symplecticity of \eqref{sec:application_env:align_name:functiongradientlayerP} and \eqref{sec:application_env:align_name:functiongradientlayerQ} 
can be shown by simply calculating the Jacobian.  
Furthermore, $\sigma$ denotes the component-wise applied \textit{activation function} operating on $\R$ and is responsible for the nonlinearity of the layer.
Most common activations are
\begin{itemize}
  \item $\relu(x) = \max\{0,x\}$, 
  \item $\explu(x) = 
        \begin{cases}
          x & x \geq 0 \\
          e^{x} - 1 & x < 0 \\ 
        \end{cases}$,
  \item $\tanh(x) = \frac{1-e^{-x}}{1+e^{-x}}$. 
\end{itemize}
As displayed in Figure~\ref{sec:application_env:tikzpicture_name:network}, we use the hyperbolic tangent $\tanh(x)$ as our activation function for all 
GradientLayers.

Since the symplecticity of \eqref{sec:application_env:align_name:functiongradientlayerP} and \eqref{sec:application_env:align_name:functiongradientlayerQ} does not impose any restriction on the parameter set $(K,a,b)$,
these layers can be trained in the classical machine learning way. The update step in iteration $(t+1)$ follows the scheme:
\begin{itemize}
 \item[1.] Compute the Euclidean gradient of the loss function \eqref{sec:application_env:align_name:nnerrorfunction}:  
 $Y_t = \nabla \L_{(K,a,b)_t}(\theta,\X_t)$, where $\X_t$ denotes the current batch.
 \item[2.] Compute an~update vector using the Euclidean gradient $Y_t$ and an~optimizer. For the \textit{gradient descent} optimizer, this basically is 
 the negative gradient scaled by some factor $\eta > 0$: \linebreak 
 $V_t = -\eta Y_t$. 
 \item[3.] Update the current parameters using the update vector: 
 $(K,a,b)_{(t+1)} \leftarrow (K,a,b)_t + V_t$.
\end{itemize} 
\begin{algorithm}
\textbf{Require:} Step size $\eta=0.001$; decay rates $\beta_1 = 0.9, \beta_2=0.99$; numerical stabilizer $\delta=10^{-8}$; cached moments $(B_1^{\ca},B_2^{\ca})$;
descent vector $Y_t$ (mostly $\nabla \L_{(K,a,b)_t}(\theta,\X_t)$). \\
\textbf{Return:} New update vector $V_t$. \\
\textbf{Step:}
\begin{algorithmic}[1]
  \State $B_1^{\ca} \leftarrow \frac{\beta_1 - \beta_1^t}{1 -  \beta_1^t}B_1^{\ca} + \frac{1 - \beta_1}{1 - \beta_1^t} Y_t$ {\small\Comment{Update $B_1^{\ca}$ using $\beta_1$ and $Y_t$.}}
  \State $B_2^{\ca} \leftarrow \frac{\beta_2 - \beta_2^t}{1 -  \beta_2^t}B_2^{\ca} + \frac{1 - \beta_2}{1 - \beta_2^t} Y_t \odot Y_t$ {\small\Comment{Update $B_2^{\ca}$ using $\beta_2$ and Hadamard product $Y_t \odot Y_t$.}}
  \State $V_t \leftarrow -\eta B_1^{\ca} \bigg/ \sqrt{B_2^{\ca} + \delta}$ {\small\Comment{Return update vector $V_t$ computed from $B_1^{\ca}$, $B_2^{\ca}$ and $\delta$.}}
\end{algorithmic}
    \caption{Euclidean Adam optimizer step in iteration $t \mapsto t+1$ similar to \cite[Algorithm~1]{main:brantner2023}.}
    \label{sec:application_env:algorithm_name:classicAdam}
\end{algorithm}
\begin{algorithm}
\begin{algorithmic}[1]
  \State $Y_t = \nabla \L_{(K,a,b)_t}(\theta,\X_t)$ {\small\Comment{Compute the Euclidean gradient.}}
  \State $V_t = \adam(Y_t)$ {\small\Comment{Compute the update vector $V_t$ using Adam.}}
  \State $(K,a,b)_{(t+1)} \leftarrow (K,a,b)_t + V_t$ {\small\Comment{Update the current weights using $V_t$.}}
\end{algorithmic}
    \caption{Complete parameter update step $t \mapsto t+1$ for a~GradientLayer using Adam optimizer.}
    \label{sec:application_env:algorithm_name:GradientLayerUpdatestep}
\end{algorithm}
This is exactly the gradient descent method for Euclidean optimization discussed in Section~\ref{sec:manifolds_subsec:optimizationtheory}.
But we are not bound to the gradient descent optimizer here. The optimizer to determine the update vector $V_t$ can be chosen arbitrarily.
The most common optimizers are \textit{stochastic gradient descent} and \textit{Adam}.
We use the Adam optimizer in all GradientLayers. One Euclidean Adam iteration, referred to as $\adam(\cdot)$, is displayed in Algorithm~\ref{sec:application_env:algorithm_name:classicAdam}. 
Analogous to \cite{main:brantner2023}, we use the values given in Algorithm~\ref{sec:application_env:algorithm_name:classicAdam} for the 
parameters $\beta_1, \beta_2$ and $\delta$. 
We set $(B_1^{\ca},B_2^{\ca}) := (0,0)$ for the initial moments.
These values can be changed, of course. The choice of parameter values is often based on empirical 
experience.
The complete update step for one GradientLayer is shown in Algorithm~\ref{sec:application_env:algorithm_name:GradientLayerUpdatestep}.

Note that we apply GradientLayers both in the reduced dimension $2n$ and the full dimension $2N$. 
The GradientLayers are meant to perform some \textit{symplectic preprocessing} before and after performing the actual dimension
reduction and reconstruction. This brings nonlinearity into the learned function.

\newpage
\subsubsubsection{PSDLayer}
PSDLayers are linear symplectic maps either upscaling or reducing the dimension. They can take the forms
\begin{align}
{ \tt PSDLayer(2n,2N)}: \R^{2n} &\to \R^{2N} \label{sec:application_env:align_name:functionpsdlayernN} \\ 
 \begin{bmatrix}
    q \\
    p \\
 \end{bmatrix}
 &\mapsto
 A~
 \begin{bmatrix}
    q \\
    p \\
 \end{bmatrix}
 :=
 \begin{bmatrix}
    X & 0 \\
    0 & X \\
 \end{bmatrix}
 \begin{bmatrix}
    q \\
    p \\
 \end{bmatrix} \text{ and} \nonumber \\ 
 {\tt PSDLayer(2N,2n)}: \R^{2N} &\to \R^{2n} \label{sec:application_env:align_name:functionpsdlayerNn} \\ 
 \begin{bmatrix}
    q \\
    p \\
 \end{bmatrix}
 &\mapsto
 A^{+} 
 \begin{bmatrix}
    q \\
    p \\
 \end{bmatrix}
 :=
 \begin{bmatrix}
    X^T & 0 \\
    0 & X^T \\
 \end{bmatrix}
 \begin{bmatrix}
    q \\
    p \\
 \end{bmatrix}, \nonumber
\end{align} 
where $X \in \St(n,N)$ is an~arbitrary element of the compact Stiefel manifold and $A$ is an~element of ${\St(2n,2N) \cap \Sp(2n, 2N)}$ like discussed
in Section~\ref{sec:manifold_subsubsubsec:immersionofcompactstiefelinsymplectic}. Despite the fact that the optimal solution being found on the symplectic Stiefel manifold $\Sp(2n,2N)$, 
we reduce the search space for this layer type to a~subspace isomorphic to the compact Stiefel manifold $\St(n,N)$. 
This has two main advantages: We cannot drift too far from the optimal solution because of the compactness. 
Second, we can optimize on the compact Stiefel manifold, which is less complex than optimizing on the symplectic Stiefel manifold.
As the notation already implies, $A^{+}=
\left[\begin{smallmatrix}
  X^T & 0 \\
  0 & X^T \\
\end{smallmatrix}\right]$
denotes the symplectic inverse of $A$.
Here, similar to GradientLayers, we have some \enquote{half fully connected} layer. 
Each output vector $q$ is influenced by the input vector $q$ but not by $p$.
For the output vector $p$, it is the other way round. 

Unlike for GradientLayers, we now have symplecticity constraints for our parameter matrix~$A$. 
Thus, we cannot simply apply the classical machine learning update steps as we must ensure
that the parameter matrix still has the form shown in \eqref{sec:application_env:align_name:functionpsdlayernN} respectively \eqref{sec:application_env:align_name:functionpsdlayerNn}.
The idea is to apply a~manifold update step on the sub-matrix $X \in \St(n,N)$ instead of the full matrix and again try to apply the Adam optimizer here.
Some problems arise here:
\begin{itemize}
  \item[(i)] We cannot use the Euclidean gradient because it is not an~element of the tangent space.
  \item[(ii)] Since the tangent spaces of $\St(n,N)$ 
  do not have the exact same structure as the Euclidean space $\R^N$, some Adam operations such as the Hadamard product $\odot$ and the dot-wise root $\sqrt{(\cdot)}$ cannot be 
  applied. In addition, we do not have a~canonical connection between different tangent spaces, which makes it difficult 
  to update moments.
  \item[(iii)] We cannot apply the parameter update step analogously to Algorithm~\ref{sec:application_env:algorithm_name:GradientLayerUpdatestep} 
  because $\St(n,N)$ is not a~Euclidean manifold.
\end{itemize}
We handle the first issue by the adaption of optimization algorithms to manifolds. 
Thereby, we use the Euclidean gradient to compute the Riemannian gradient $\grad\L_{X}(\theta,\X_b)$ 
(with respect to a~chosen metric on $\St(n,N)$) which is an~element of the tangent space $T_X\St(n,N)$. 
Note that $X \in \St(n,N)$ is the PSDLayer specific weight matrix of all network weights $\theta$.
The authors of \cite{main:brantner2023} came up with a~combined solution for the second and third problem: 
They used the fact that the Lie group $\Orth(N)$ together with $\St(n,N)$ admits the structure of a~homogeneous space 
(we discussed this connection in Section~\ref{sec:manifold_subsubsubsec:compactstiefelhomogeneousspace}).  
As done in \eqref{sec:manifold_env:align:compactstiefelderivedretraction}, this allows us to construct the retraction 
\begin{align*}
  R_X^{\St(n,N)}(Z) = \lambda_E(X) R_{I_N}^{\Orth(N)} \left( \lambda_E(X)^{-1} \Omega_X(Z)\lambda_E(X) \right)E
\end{align*}
at an~arbitrary point $X \in \St(n,N)$ from a~retraction 
$R_{I_N}^{\Orth(N)}$ on $\Orth(N)$ at the identity $I_N$. 
This retraction has the nice property that we come across the same vector space, the global tangent space representation 
$\g^{hor,E} \subseteq \g = T_{I_N}\Orth(N) = \S_{\skew}(N)$, no matter at what point $X \in \St(n,N)$ we apply the retraction
(see \eqref{sec:manifold_env:align_name:chainofmapsofderivedretractions} for details).
The idea is to now split up this retraction and use the intermediate step $\g^{hor,E}$ to apply the classical Adam optimizer from 
Algorithm~\ref{sec:application_env:algorithm_name:classicAdam}.
This yields two major advantages: We do not need to transport the moments as we always work on the same vector space $\g^{hor,E}$, and 
we can apply Adam without the need to modify it.
First, we lift the Riemannian gradient $Z:=\grad\L_{X}(\theta,\X_b)$ to the global tangent space representation via the map
\begin{align} \label{sec:application_env:align_name:lift}
  \lift(Z):=  \lambda_E(X)^{-1} \Omega_X(Z)\lambda_E(X).
\end{align}
Then we apply the classic Adam optimizer in Algorithm~\ref{sec:application_env:algorithm_name:classicAdam} to $B:= \lift(Z)$.
Here, although we do not operate on the space $\R^N$, we can indeed make all necessary computations. This is due to the fact that the Hadamard product of 
skew-symmetric matrices is a~symmetric, positive semidefinite matrix.
Now, we map the Adam output $V := \adam(B) \in \g^{hor,E}$ back to $T_X\St(n,N)$ via the map
\begin{align} \label{sec:application_env:align_name:retract}
  \retract(V) := \lambda_E(X) R_{I_N}^{\Orth(N)}(V)E.
\end{align}
In the end, we must construct the new iterate from the updated parameter matrix $X \in \St(n,N)$. The complete parameter update step is shown 
in Algorithm~\ref{sec:application_env:algorithm_name:PSDLayerUpdatestep}.  
\begin{algorithm}
\begin{algorithmic}[1]
  \State $Y_t = \nabla \L_{X_t}(\theta,\X_t)$ {\small\Comment{Compute the Euclidean gradient.}}
  \State $Z_t = \grad\L_{X_t}(\theta,\X_t)$ {\small\Comment{Compute the Riemannian gradient from $Y_t$.}}
  \State $S_t = \lambda_E(X_t)$ {\small\Comment{Compute the section $\lambda_E$ at $X_t$.}}
  \State $B_t = \lift(Z_t) = S_t^{-1} \Omega_{X_t}(Z_t)S_t$ {\small\Comment{Lift $Z_t$ to $\g^{hor,E}$.}}
  \State $V_t = \adam(B_t)$ {\small\Comment{Compute the update vector $V_t$ using Adam.}}
  \State $X_{t+1} = \retract(V_t) = S_t R_{I_N}^{\Orth(N)}(V_t)E$ {\small\Comment{Retract $V_t$ back to $\St(n,N)$.}}
  \State $A_{t+1} = \left[\begin{smallmatrix}
    X_{t+1} & 0 \\
    0 & X_{t+1} \\
  \end{smallmatrix}\right]$ {\small\Comment{Construct the new iterate $A_{t+1}$.}}
\end{algorithmic}
    \caption{Complete update step $t \mapsto t+1$ for a~PSDLayer using Adam on homogeneous spaces.}
    \label{sec:application_env:algorithm_name:PSDLayerUpdatestep}
\end{algorithm}

In conclusion, the PSDLayers are used to perform the actual dimension reduction (and reconstruction) whilst still enforcing the symplecticity
condition. The setup requires manifold optimization techniques on the compact Stiefel manifold and the orthogonal group to enforce 
symplecticity. It takes a~detour over homogeneous spaces to be able to apply the Adam optimizer always on the same 
vector space $g^{hor,E}$. This made it possible to handle moments without the need of transporting vectors between different 
tangent spaces. 
Next, we present an~optimizer approach that modifies Adam to work on different tangent spaces of $\St(n,N)$ directly 
by transporting the moments using a~vector transport.

\subsubsection{An alternative optimizer approach}\label{sec:application_subsubsec:alternativeoptimizer}

Our goal is now to apply an~Adam optimizer step on the \textit{PSDLayer} presented in \cite{main:brantner2023} without the need of 
a~detour over a~homogeneous space. To achieve this, a~first naive idea is to apply the classic Adam optimizer on the tangent space 
$T_X\St(n,N)$ for each iterate $X \in St(n,N)$
and then use vector transport to move the caches to the new iterate: 
\begin{itemize}
  \item[1.] Compute the Riemannian gradient $Z_t:=\grad\L_{X_t}(\theta,\X_t)$ with respect to the preferred metric using the Euclidean gradient $\nabla \L_{X_t}(\theta,\X_t)$.
  \item[2.] Apply the Adam optimizer on the tangent space $T_{X_t}\St(n,N)$: 
            \begin{itemize}
             \item[2.1.] $B_1^{\text{cache}} \leftarrow \frac{\beta_1 - \beta_1^t}{1 -  \beta_1^t}B_1^{\text{cache}} + \frac{1 - \beta_1}{1 - \beta_1^t} Z_t$,
             \item[2.2.] $B_2^{\text{cache}} \leftarrow \frac{\beta_2 - \beta_2^t}{1 -  \beta_2^t}B_2^{\text{cache}} + \frac{1 - \beta_2}{1 - \beta_2^t} Z_t \odot Z_t$,
             \item[2.3.] $V_t \leftarrow -\eta B_1^{\text{cache}} \bigg/ \sqrt{B_2^{\text{cache}} + \delta}$. 
            \end{itemize}
  \item[3.] Retract the update vector $V_t$ along a retraction $R_{X_t}$ to obtain the new iterate: \\$X_{t+1} \leftarrow R_{X_t}\left(V_t\right)$.
  \item[4.] Transport the cache $(B_1^{\text{cache}},B_2^{\text{cache}})$ to the new iterate tangent space $T_{X_{t+1}}\St(n,N)$ 
            in direction $V_t$ along $R_{X_t}$ using the preferred vector transport $\T_{V_t}$:
            \begin{itemize}
              \item[4.1.] $B_1^{\text{cache}} \leftarrow \T_{V_t}(B_1^{\text{cache}})$,
              \item[4.2.] $B_2^{\text{cache}} \leftarrow \T_{V_t}(B_2^{\text{cache}})$.
            \end{itemize}
\end{itemize}
Two problems arise here, that do not come up on the homogeneous space setup:
\begin{itemize}
  \item Unlike the global tangent space $\g^{hor,E} \subseteq \S_{\skew}(N)$, our tangent spaces here are not simply skew-symmetric matrices. 
  Thus, when we apply step $2.3.$, we do not remain in the current tangent space. 
  \item $B_2^{\text{cache}}$ (unlike $B_1^{\text{cache}}$) is not part of the tangent space, as it can be any $N \times n$ matrix with positive entries. Thus, step 4.2. cannot be 
        applied, and we cannot simply move $B_2^{\text{cache}}$ to the current iterate. Note that for the homogeneous space setup we do not have this problem as 
        we are operating on the global tangent space $\g^{hor,E}$ in each step.
\end{itemize}
We have different options to overcome these problems:
\begin{itemize}
  \item[(i)] Dismiss the second moment information $B_2^{\text{cache}}$ completely.
  \item[(ii)] Find a~workaround to keep at least some second moment information.
\end{itemize}
The first option is clear. Here, we completely lose the quadratic gradient information which empirically has turned out not to work sufficiently good. 
Another approach would be to use the structure of the tangent space and to modify Adam so that we can overcome the problems by at least keeping some 
second moment information. We propose the following setup:
\begin{itemize}
  \item[1.] Compute the Riemannian gradient $Z_t:=\grad\L_{X_t}(\theta,\X_t)$ with respect to the preferred metric on $\St(n,N)$ using the Euclidean gradient $\nabla \L_{X_t}(\theta,\X_t)$.
  \item[2.] Apply a~modified Adam optimizer on the tangent space $T_{X_t}\St(n,N)$ by keeping only one cache but using the current second moment information: 
            \begin{itemize}
             \item[2.1.] $B_2 \leftarrow \sqrt{\frac{\beta_2 - \beta_2^t}{1 -  \beta_2^t}(B_1^{\text{cache}} \odot B_1^{\text{cache}}) + \frac{1 - \beta_2}{1 - \beta_2^t} Z_t \odot Z_t + \delta}$,
             \item[2.2.] $B_1^{\text{cache}} \leftarrow \frac{\beta_1 - \beta_1^t}{1 -  \beta_1^t}B_1^{\text{cache}} + \frac{1 - \beta_1}{1 - \beta_1^t} Z_t$,
             \item[2.3.] $B_1^{\text{cache}} \in T_{X_t}\St(n,N)$ can be decomposed as $B_1^{\text{cache}} = X_tW + X_t^{\perp}K$ with $W \in \S_{\skew}(n)$ and 
             $K \in \R^{(N-n)\times n}$. 
                         Now, to be able to apply the parameter update, we do this separately for $X_t W$ and the orthogonal part $X_t^{\perp}K$:
                         \begin{itemize}
                          \item $X_tW \leftarrow X_t \left(W/\sqrt{B_2^T B_2}\right)$,
                          \item $X_t^{\perp}K \leftarrow (I_N - X_t X_t^T ) ((X_t^{\perp}K)/ B_2)$,
                          \item $V_t \leftarrow -\eta \left(X_t W + X_t^{\perp}K\right)$.
                         \end{itemize}
            \end{itemize}
  \item[3.] Retract the update vector $V_t$ to obtain the new iterate: $X_{t+1} \leftarrow R_{X_t}\left(V_t\right)$.
  \item[4.] Transport the cache $B_1^{\text{cache}}$ to the new iterate tangent space $T_{X_{t+1}}\St(n,N)$ 
            in direction $V_t$ along $R_{X_t}$ using the preferred vector transport $\T_{V_t}$:
              $$B_1^{\text{cache}} \leftarrow \T_{V_t}(B_1^{\text{cache}}).$$
\end{itemize}
Instead of the second cache $B_2^{\text{cache}}$, we now use the quadratic information of the previous gradients contained in the (not yet updated) $B_1^{\text{cache}}$ cache.
This way, instead of using the weighted sum of quadratic gradients stored in $B_2^{\text{cache}}$, 
we use the quadratic information of the weighted sum of gradients stored in $B_1^{\text{cache}}$ plus the quadratic sum of the current gradient. 
Alternatively, one can first update $B_1^{\text{cache}}$ and then use only the Hadamard product of $B_1^{\text{cache}}$. This is a~little cheaper, but then 
the quadratic sum of the current gradient is not used directly.
In step 2.3., we used that $W$ is skew-symmetric, which thus allows us to apply the Adam update here directly. To do so, we needed to 
update the orthogonal part $X_t^{\perp}K$ separately. As the Adam update step destroys the orthogonality, we enforced this by the left-multiplication of
$(I_N - X_t X_t^T)$. 
In the end, we just keep one moment $B_1^{\text{cache}}$ that is transported via the preferred vector transport. 
This way, we minimize the information loss of quadratic gradient information by still being able to respect the structure of the underlying 
tangent space. The modified Adam optimizer step, referred to as $\stiefeladam(\cdot)$, is displayed in Algorithm~\ref{sec:application_env:algorithm_name:ModifiedAdam}.
The whole update step using the modified Adam optimizer is shown in Algorithm~\ref{sec:application_env:algorithm_name:PSDLayerModifiedAdamStep}.
\begin{algorithm}
\textbf{Require:} Step size $\eta=0.001$; decay rates $\beta_1 = 0.9, \beta_2=0.99$; numerical stabilizer $\delta=10^{-8}$; cached moment $B_1^{\ca}$;
vector $Z_t \in T_{X_t}\St(n,N)$ (mostly $\grad \L_{X_t}(\theta,\X_t)$). \\
\textbf{Return:} New update vector $V_t \in T_{X_t}\St(n,N)$. \\
\textbf{Step:}
\begin{algorithmic}[1]
  \State $B_2 = \sqrt{\frac{\beta_2 - \beta_2^t}{1 -  \beta_2^t}(B_1^{\text{cache}} \odot B_1^{\text{cache}}) + \frac{1 - \beta_2}{1 - \beta_2^t} Z_t \odot Z_t + \delta}$ {\small\Comment{Compute \enquote{pseudo cache} $B_2$ (and add $\delta$).}}
  \State $B_1^{\text{cache}} \leftarrow \frac{\beta_1 - \beta_1^t}{1 -  \beta_1^t}B_1^{\text{cache}} + \frac{1 - \beta_1}{1 - \beta_1^t} Z_t$ {\small\Comment{Update $B_1^{\text{cache}}.$}}
  \State $W = X_t^TB_1^{\text{cache}}$ {\small\Comment{Compute the matrix $W$ of the skew-symmetric part $X_tW$ of $B_1^{\text{cache}}$.}}
  \State $X^{\perp}K = B_1^{\text{cache}} - X_tW$ {\small\Comment{Compute the complement part $X_t^{\perp}K$ of $B_1^{\text{cache}}$.}}
  %\State $XW = X_t \left(W/\sym(B_2[1:n,:])\right)$ {\small\Comment{Compute the new skew-symmetric part.}}
  \State $XW = X_t \left(W/\sqrt{B_2^T B_2}\right)$ {\small\Comment{Compute the new skew-symmetric part.}}
  \State $X^{\perp}K \leftarrow (I_N - X_t X_t^T ) ((X^{\perp}K)/ B_2)$ {\small\Comment{Update the complement part.}}
  \State $V_t \leftarrow -\eta \left(XW + X^{\perp}K\right)$ {\small\Comment{Return the update vector $V_t$ computed from $XW$ and $X^{\perp}K$.}}
\end{algorithmic}
    \caption{Modified Adam optimizer step in iteration $t \mapsto t+1$ to be applied on $T_{X_t}\St(n,N)$.}
    \label{sec:application_env:algorithm_name:ModifiedAdam}
\end{algorithm}
\begin{algorithm}
\begin{algorithmic}[1]
  \State $Y_t = \nabla \L_{X_t}(\theta,\X_t)$ {\small\Comment{Compute the Euclidean gradient.}}
  \State $Z_t = \grad\L_{X_t}(\theta,\X_t)$ {\small\Comment{Compute the Riemannian gradient from $Y_t$.}}
  \State $V_t = \stiefeladam(Z_t)$ {\small\Comment{Compute the update vector $V_t$ using modified Adam.}}
  \State $X_{t+1} = R_{X_t}(V_t)$ {\small\Comment{Retract $V_t$ back to $\St(n,N)$ using a~retraction $R_{X_t}$.}}
  \State $A_{t+1} = \left[\begin{smallmatrix}
    X_{t+1} & 0 \\
    0 & X_{t+1} \\
  \end{smallmatrix}\right]$ {\small\Comment{Construct the new iterate $A_{t+1}$.}}
  \State $B_1^{\ca} \leftarrow \T_{V_t}(B_1^{\text{cache}})$  {\small\Comment{Transport the modified Adam cache to $T_{X_{t+1}}\St(n,N)$.}}
\end{algorithmic}
    \caption{Complete update step $t \mapsto t+1$ for a~PSDLayer using Adam on $\St(n,N)$ directly.}
    \label{sec:application_env:algorithm_name:PSDLayerModifiedAdamStep}
\end{algorithm}
%As before, if we choose a~vector transport that requires only $\O(Nn^2)$ flops, we end up with total costs of $\O(Nn^2)$.

\subsection{Implementation of different setups to learn the network}\label{sec:application_subsec:implementation}

As part of this thesis, we have implemented different training routines in the Julia programming language to learn the 
symplectic autoencoder network presented in Section~\ref{sec:application_subsec:symplecticautoencodernetwork}. We have tested the setups 
for two different examples of Hamiltonian systems.
To do so, we used the Julia library \textit{GeometricMachineLearning.jl} (GML). 
This library implements the single layers of the network architecture described in Section~\ref{sec:application_subsubsec:architecture}.
It contains pre-defined building blocks to chain the layers to a~network, execute an~optimizer step, 
handle data, conduct the model reduction and much more. 
The library is developed and maintained by the authors of \cite{main:brantner2023}, who were also the first to introduce this network.
It is still under current development. The latest release version as of June 15th 2024 is v$0.3.0$. 
For our implementation, we use the release version v$0.2.0$, which we will refer to throughout the rest of the thesis. 
At some points, we will comment on important changes made in the latest version v$0.3.0$ compared to v$0.2.0$.
There also is a~GitHub repository \cite{githubGML:brantner2024} of that project, where one can find the source code, release information 
and documentations of the project.

We use this library to implement complete learning and testing routines. In this section, we give an~overview of all tested configurations. We omit 
most of the implementation details here, as they can be found in our accompanying Julia implementation project. 
We focus on the core parts of the PSDLayer optimizer step and the functional changes made to the GML library modules.

First, in Section~\ref{sec:application_subsubsec:generalprojectstructure}, we introduce the structure of our implementation project.
Next, we work through the different parts of a~complete learning setup and describe what configurations we implemented. 
We took the learning configuration used in \cite{main:brantner2023} as a~reference setup. The authors published their implementation of 
this setup in \cite[scripts/symplectic\_autoencoder]{githubGML:brantner2024}. 
We also show how we have integrated our own network-specific modifications into GML and what adjustments we had to make to GML.

\subsubsection{General structure} \label{sec:application_subsubsec:generalprojectstructure}
The project entry file is named \stexttt{Implementation}. An~overview of the project structure is displayed in Figure~\ref{sec:application_env:figure_name:folderstructure}.
\begin{figure}[htbp]
    \centering
 \scalebox{0.9}{
    \begin{forest}
    for tree={
        font=\ttfamily,
        grow'=0,
        child anchor=west,
        parent anchor=south,
        anchor=west,
        calign=first,
        edge path={
            \noexpand\path [draw, \forestoption{edge}] (!u.south west) -- +(5pt,0) |- (.child anchor)\forestoption{edge label};
        },
        before typesetting nodes={
            if n=1
                {insert before={[,phantom]}}
                {}
        },
        fit=band,
        before computing xy={l=15pt},
    }
    [Implementation
        [plots\_temp]
        [resources]
        [src
            [reproduction
                [general
                  [fn\_global\_consts.jl]
                  [...]
                ]
                [main
                  [fn\_training\_v020\_reproduction.jl]
                  [...]
                ]
            ]
            [sineGordon
                [examples
                  [...]
                ]
                [general
                  [...]
                ]
                [main
                  [...]
                ]
            ]
            [alternative\_optimizer
                [fn\_adam\_added\_scripts
                  [...]
                ]
                [fn\_adam\_mofified\_scripts
                  [...]
                ]
                [general
                  [...]
                ]
                [main
                  [...]
                ]
            ]
            [shared
                [fn\_GML\_custom
                  [fn\_batch\_v020.jl]
                  [fn\_reduced\_system\_v020.jl]
                ]
                [fn\_customtypes.jl]
                [fn\_loss.jl]
            ]
        ]
        [Project.toml]
        [Manifest.toml]
    ]
    \end{forest}
 }
    \caption{Project structure of the implementation.}
    \label{sec:application_env:figure_name:folderstructure}
\end{figure}
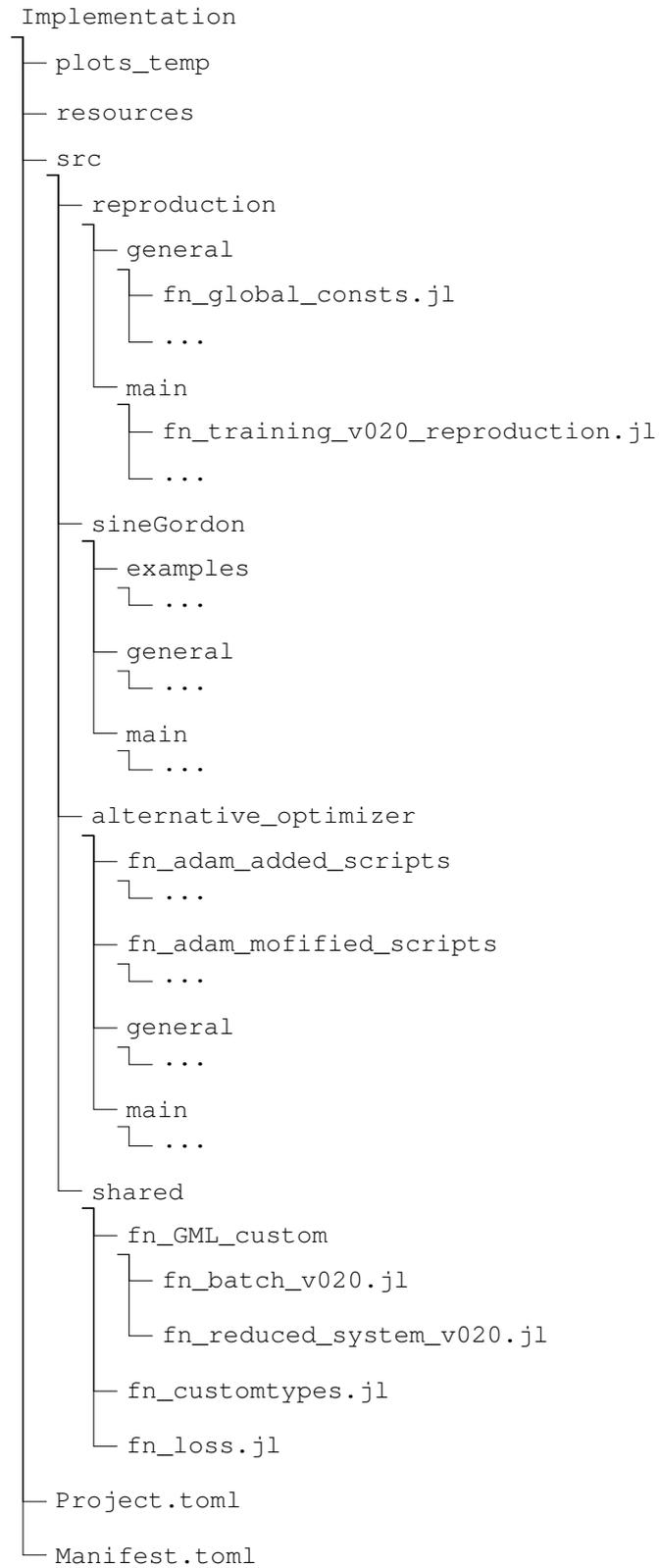
Like for all Julia projects, the package and environment setups are specified in the files \stexttt{Project.toml} and 
\stexttt{Manifest.toml}. 
The actual source code is found in the folder \stexttt{src}. The source code is structured in the four folders 
\begin{itemize}
  \item \stexttt{reproduction},
  \item \stexttt{sineGordon},
  \item \stexttt{alternative\_optimizer} and
  \item \stexttt{shared}.
\end{itemize}
This structure evolved in the course of the sequence in which the individual tasks were created.
First, in \stexttt{reproduction}, the setup from \cite{main:brantner2023} was rebuilt and modified to try out some parameter and network variations. 
This also includes the implementation of an~example with which the setup was tested (see Section~\ref{sec:numericalexperiments_subsec:waveequation} for details). 
Subsequently, an~example of our own was implemented in \stexttt{sineGordon} (see Section~\ref{sec:numericalexperiments_subsec:sinegordon} for details). 
For this, learning setups similar to \stexttt{reproduction} were used. Therefore, there are modules in \stexttt{shared} that are used for both subprojects. 
It also contains modifications of the library modules from GML. 
Finally, the use of the StiefelAdam optimizer introduced in Section~\ref{sec:application_subsubsec:alternativeoptimizer} was implemented in \stexttt{alternative\_optimizer}. 
The learning and testing routines of both previously implemented examples were also implemented with the new optimizer here.
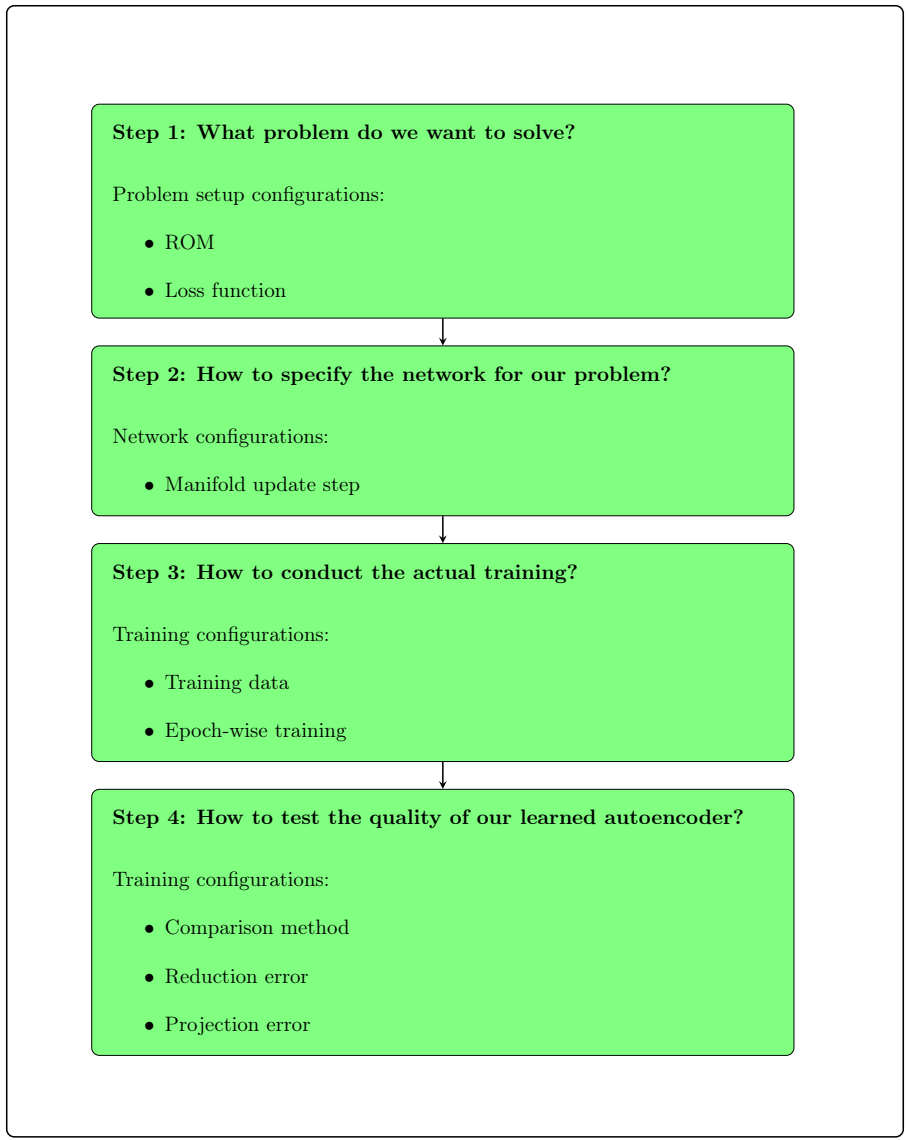
\begin{figure}[ht!] % h = Here, t = Top, b = Bottom, p = same Page, ! = DO AS I SAY
    \centering
    \resizebox{0.75\textwidth}{!}{ 
    \begin{tikzpicture}[scale=1]
        % Define the style for the buttons
        %\tikzstyle{button} = [rectangle, rounded corners, minimum width=4cm, minimum height=0.8cm, text centered, draw=black, fill=green!50]
        \tikzstyle{button} = [
            rectangle, 
            rounded corners, 
            minimum width=8cm, 
            minimum height=0.8cm, 
            text centered, 
            draw=black, 
            fill=green!50,
            inner sep=1em,
            align=center
        ]
        % Define the style for the arrows
        \tikzstyle{arrow} = [thick, ->, >=stealth]
        % Draw the buttons for the first column
        \node[button, below=0.5cm of btn1] (btn2){
        %\begin{varwidth}{8cm}
        \begin{minipage}[c]{12cm}
          \textbf{Step 1: What problem do we want to solve?}  \\ \\
          Problem setup configurations:
            \begin{itemize}
                \item ROM
                \item Loss function 
            \end{itemize}
        \end{minipage}
        %\end{varwidth}
        };
        \node[button, below=0.5cm of btn2] (btn3) {
        %\begin{varwidth}{8cm}
        \begin{minipage}[c]{12cm}
        \textbf{Step 2: How to specify the network for our problem?}  \\ \\
        Network configurations:
            \begin{itemize}
                \item Manifold update step
            \end{itemize}
        %\end{varwidth}
        \end{minipage}
        };
        \node[button, below=0.5cm of btn3] (btn4) {
        %\begin{varwidth}{8cm}
        \begin{minipage}[c]{12cm}
        \textbf{Step 3: How to conduct the actual training?}  \\ \\
        Training configurations:
            \begin{itemize}
                \item Training data 
                \item Epoch-wise training
            \end{itemize}
        %\end{varwidth}
        \end{minipage}
        };
        \node[button, below=0.5cm of btn4] (btn5) {
        %\begin{varwidth}{8cm}
        \begin{minipage}[c]{12cm}
        \textbf{Step 4: How to test the quality of our learned autoencoder?}  \\ \\
        Training configurations:
            \begin{itemize}
                \item Comparison method
                \item Reduction error
                \item Projection error 
            \end{itemize}
        %\end{varwidth}
        \end{minipage}
        };
        % Draw the arrows in the first column (bottom to top)
        \foreach \i in {4,3,2} {
            \draw[arrow] (btn\i.south) -- (btn\the\numexpr\i+1.north);
        }
        % Draw the border around the entire figure
        \draw[rounded corners, thick] ([shift={(-6,.5)}]btn1.north west) rectangle ([shift={(2,-1.5)}]btn5.south east);
    \end{tikzpicture}
    }
    \caption{All configuration parts for a~full learning routine of the autoencoder network.}
    \label{sec:application_env:tikzpicture_name:learningconfigurationparts}
\end{figure}

\newpage
\noindent A~complete learning configuration consists of 
\begin{itemize}
  \item the problem setup,
  \item the network,
  \item the training and 
  \item the testing.
\end{itemize}
%In the following we describe the different training and testing setups we implemented (for learning the different examples) and the alternative optimizer adaptions. 
In the following, we describe which parameter constellations and variants we chose for each of the four main parts and where we made functional modifications of GML. 
%The individual example Hamiltonian systems and the actual training is explained in the next Section~\ref{sec:numericalexperiments}.
We use the learning setup from \cite{main:brantner2023} that was implemented in \cite[scripts/symplectic\_autoencoder]{githubGML:brantner2024} as a~reference configuration 
because this is the learning routine that \cite{main:brantner2023} used to test their network.
We will point out all adaptions and changes we made in comparison to this setup. 
\cite[scripts/symplectic\_autoencoder]{githubGML:brantner2024} was published around October 2023. 
Back then, the latest version of GML was v$0.1.0$. Thus, we will also point out major adaptions we had to perform 
to make the setup compatible with the release version v$0.2.0$. 
An~overview of the different configuration parts is displayed in Figure~\ref{sec:application_env:tikzpicture_name:learningconfigurationparts}.
The actual learning, the implementation of the test examples, and details on how to use the project are then covered in the last chapter~\ref{sec:numericalexperiments}.

\subsubsection{Problem setup configurations}

%In contrast to the problem setup described in Section~\ref{sec:application_subsec:problemsetup}, which we consider to be the proper way to perform 
%the symplectic model reduction, \cite{main:brantner2023} used a~slightly different setup:
In contrast to the problem setup described in Section~\ref{sec:application_subsec:problemsetup}, \cite{main:brantner2023} used a~slightly different setup:
\begin{itemize}
  \item They performed the reconstruction differently without the use of a~reference state $x_{\reff}$.
  \item They used a~different loss function $\loss(\X_t,\Y_t)$.
  %\item They used unnormlized data.
\end{itemize}
  
\subsubsubsection{ROM}
According to Algorithm~\ref{sec:application_env:algorithm_name:FOMtoROM}, we must first select a~reduced initial value 
$x_{r,\mu}^0$ and then compute the reference state $x_{\reff,\mu} = x_{\mu}^0 - d(x_{r,\mu}^0)$ to obtain the ROM.
The reconstruction of the approximate solutions is then simply $\tilde{x}_{\mu}(t)=x_{\reff,\mu} + d(x_{r,\mu}(t))$. 
This setup ensures the exact reproduction of the initial value, i.e., $\tilde{x}_{\mu}(0)=x_{\mu}^0$.
\cite{main:brantner2023} omitted the reference state. This means the reconstruction there is computed as 
$$\tilde{x}_{\mu}(t)=d(x_{r,\mu}(t)).$$
Note that this setup does not ensure the exact reconstruction of the initial value. 
To align with our proposed problem setup, we decided to implement an~alternative configuration using the reference state $x_{\reff,\mu}$. 
For this change, we had to make modifications in the GML file \stexttt{reduced\_system.jl}.
  
\subsubsubsection{Loss function}
In \eqref{sec:application_env:align_name:errorfunction}, we defined the error function for the minimization problem to be solved.
Given a~matrix batch $\X_t$ and a~matrix output estimate $\Y_t := (d\circ e)(\X_t)$ and using the Euclidean norm $\norm{\cdot}_2$
on $\R^{2Nk}$, this yielded the autoencoder loss function (see \eqref{sec:application_env:align_name:nnlossfunctionformatrixbatch})
$$\loss(\X_t,\Y_t) = \frac{1}{2N|\X_t|} \norm{\X_t - \Y_t}_F^2.$$
In contrast, GML uses the relative error 
\begin{align}
  \loss(\X_t,\Y_t) &= \tfrac{\norm{\X_t - \Y_t}_F}{\norm{\X_t}_F}
\end{align}
as the loss function.
We decided to try out both of these loss functions in our implementation.
To do so, we had to modify the loss function implemented in the GML file \stexttt{data\_loader.jl}. We created a~new file 
\stexttt{fn\_loss.jl} and implemented both loss functions here. 
  
\subsubsubsection{Summary}
We summarize the default GML configurations for the ROM and the loss function and our modifications in Table~\ref{sec:application_env:table_name:variantsFOMROM}.
\begin{table}[ht!]
\centering
\begin{tabular}{|p{2.3cm}||p{4.7cm}|p{5.5cm}|} 
 \hline
 \multicolumn{3}{|c|}{Problem setup configuration values} \\
 \hline
 \textbf{Element}  & \textbf{Reference configuration} & \textbf{New configuration} \\
 \hline
 ROM                          & ---                                                                 & $x_{\reff,\mu}=x_{\mu}^0 - d(x_{r, \mu}^0)$                           \\
 %ROM                         & $\dot{x}_{r,\mu} = J_{2n}\nabla_{x}\H_{r} (x_{r,\mu}; \mu)$         & $\dot{x}_{r,\mu} = J_{2n}\nabla_{x}\H_{r} (x_{r,\mu}; \mu)$       \\
                              & $\tilde{x}_{\mu}(t)=d(x_{r,\mu}(t))$                        & $\tilde{x}_{\mu}(t)=x_{\reff,\mu} + d(x_{r,\mu}(t))$         \\
 [0.2cm]
 Loss function                & $\loss(\X_t,\Y_t) = \tfrac{\norm{\X_t - \Y_t}_F}{\norm{\X_t}_F}$    & $\loss(\X_t,\Y_t) = \frac{1}{2N|\X_t|} \norm{\X_t - \Y_t}_F^2$    \\
 [0.1cm]
 %Integration                 & External library                                                    & External library                                                  \\
\hline
 \multicolumn{3}{|c|}{Functional GML modifications} \\
 \hline
 \textbf{Element}  & \textbf{Reference configuration} & \textbf{New configuration} \\
 \hline
 ROM                      & ---                    & \small{\stexttt{reduced\_system.jl}}                       \\
 [0.2cm]
 Loss function                & ---                    & \small{\stexttt{data\_loader.jl}}                       \\
 %Integration                 & Yes (design)           & Yes (design)           \\
 \hline
\end{tabular}
\caption{Variants of problem setup configurations.}
\label{sec:application_env:table_name:variantsFOMROM}
\end{table}

\subsubsection{Network configurations} \label{sec:application_subsubsec:networkconfiguration}
We implemented the network setup in the files 
\begin{itemize}
  \item \stexttt{fn\_training\_v020\_reproduction.jl},
  \item \stexttt{fn\_training\_v020\_sineGordon.jl},
  \item \stexttt{fn\_training\_v020\_alternative\_optimizer\_reproduction.jl},
  \item \stexttt{fn\_training\_v020\_alternative\_optimizer\_sineGordon.jl}.
\end{itemize}
We do this for every implemented Hamiltonian system example (reproduction and sineGordon) and for the different 
optimizer approaches separately.
Here, we composed the network by specifying an~optimizer, initializing weights and parameters and 
chaining the GML-built-in layers (\stexttt{PSDLayer}, \stexttt{GradientLayerQ} and \stexttt{GradientLayerP}) 
to the network block specified in Figure~\ref{sec:application_env:tikzpicture_name:network}.
Note that the name component \enquote{v020} of the training files indicates that we are using a~modified 
version of the training file \stexttt{training.jl} from \cite[scripts/symplectic\_autoencoder]{githubGML:brantner2024}.

\subsubsubsection{GML optimizers}
GML offers the use of different optimizers. As described in Section~\ref{sec:application_subsubsec:architecture}, we choose 
Adam. The GML built-in update step for a~\stexttt{GradientLayer} uses the standard Adam update routine displayed in Algorithm~\ref{sec:application_env:algorithm_name:GradientLayerUpdatestep}.
On a~\stexttt{PSDLayer} GML applies the update step using Adam on homogeneous spaces as displayed in Algorithm~\ref{sec:application_env:algorithm_name:PSDLayerUpdatestep}.
We can specify the Adam parameters in GML ourselves. Analogous to \cite{main:brantner2023}, we chose $(\eta, \beta_1, \beta_2, \delta)=(0.001, 0.9, 0.99, 10^{-8})$. The initial default 
of the cache is zero. These choices are applied for both layer specific Adam optimizers.
In GML, we can choose between different activation functions. We chose $\tanh(x)$.

A~common technique is to reduce the learning rate $\eta$ a~little in each iteration. This is often referred to as 
\textit{decay rate}. This technique can speed up the error loss, because the longer we optimize, the closer we get to an~optimum of the function, 
and therefore the smaller the required steps size gets.
GML does not yet offer this in v$0.2.0$. However, a~decay option was added in version v$0.3.0$.  

For the \stexttt{PSDLayer} update step, we additionally need to choose a~retraction. GML offers two choices here:
\begin{itemize}
  \item the Geodesic retraction and
  \item the Cayley retraction.
\end{itemize}
We decided to use the Cayley retraction $R_{I_N}^{\cay}$ on $\Orth(N)$ (see expressions~\eqref{sec:manifold_env:align_name:orthogonalgroupcayleyretraction},~\eqref{sec:manifold_env:align_name:orthogonalgroupSMWformulacayleywithrightmultiplication}~and~\eqref{sec:manifold_env:align_name:compactstiefelhomogeneouscayleyE}). 
For the Riemannian gradient on $\St(n,N)$, GML by default uses the Riemannian gradient with respect to the canonical metric~\eqref{sec:manifold_env:align_name:compactstiefelcanonicalriemanniangradient}.

\subsubsubsection{Alternative Stiefel optimizer}
To integrate our new manifold update step from Algorithm~\ref{sec:application_env:algorithm_name:PSDLayerModifiedAdamStep} 
in GML (version v$0.2.0$), we had to conduct a~number of library adaptions. To do so, we extended and modified the GML-files
\begin{itemize}
  \item \stexttt{optimizer.jl},
  \item \stexttt{retractions.jl},
  \item \stexttt{stiefel\_manifold.jl},
  \item \stexttt{init\_optimizer\_cache.jl},
  \item \stexttt{utils.jl},
  \item \stexttt{batch.jl}
\end{itemize} 
and added new custom files 
\begin{itemize}
  \item \stexttt{fn\_adam\_optimizer\_stiefel.jl},
  \item \stexttt{fn\_adam\_optimizer\_stiefel\_withDecay.jl}.
\end{itemize}
We will not go into too much detail about all GML modifications we made to integrate our new optimizer. 
Instead, we focus on the implementation of the core steps of the StiefelAdam update step.
First, we modified \stexttt{stiefel\_manifold.jl} to implement the use of two different Riemannian gradients:
\begin{itemize}
  \item Canonical (already present in GML),
  \item Euclidean (added, see definition~\eqref{sec:manifold_env:align_name:compactstiefeleuclideanriemanniangradient} and Figure~\ref{sec:application_env:jllisting:euclideanrgrad}).
\end{itemize}
\begin{figure}[ht!] % h = Here, t = Top, b = Bottom, p = same Page, ! = DO AS I SAY
    \centering
  \begin{jllisting}
     #fn_ADDED: Euclidean metric rgrad
     function rgrad(::Euclidean, Y::StiefelManifold, e_grad::AbstractMatrix)
         H = Y.A'*e_grad
         e_grad - 0.5*Y.A*(H + H')
     end
  \end{jllisting}
    \caption{Implementation of the Euclidean Riemannian gradient on $\St(n,N)$.}
    \label{sec:application_env:jllisting:euclideanrgrad}
\end{figure}
\begin{figure}[ht!] % h = Here, t = Top, b = Bottom, p = same Page, ! = DO AS I SAY
    \centering
  \begin{jllisting}
    function update!(o::Optimizer{<:StiefelAdamOptimizerWithDecay{T}}, C::AdamCache, B::AbstractArray, Y::StiefelManifold) where T
      # determine B2 from Cache
      C.B₂ .= racᵉˡᵉ( scalar_add(((o.method.ρ₂ - o.method.ρ₂t)/(T(1.) - o.method.ρ₂t))*(C.B₁.^2) + ((T(1.) - o.method.ρ₂)/(T(1.) - o.method.ρ₂t))*B.^2, o.method.δ ))
      # update Cache:
      add!(C.B₁, ((o.method.ρ₁ - o.method.ρ₁t)/(T(1.) - o.method.ρ₁t))*C.B₁, ((T(1.) - o.method.ρ₁)/(T(1.) - o.method.ρ₁t))*B)
      # get skew symmetric part of Cache divided by B2:
      n = size(C.B₂,2)
      B2n = C.B₂[1:n,:]
      W = Y.A'*C.B₁
      # get orthogonal part of Cache divided by B2:
      OB2 = /ᵉˡᵉ(C.B₁ - Y.A*W,C.B₂)
      # store update vector in gradient varible B
      B .= -o.method.η * ( Y.A * /ᵉˡᵉ(W,B2n) + OB2 - Y.A*(Y.A'*OB2) )
    end
  \end{jllisting}
    \caption{StiefelAdam optimizer implementation.}
    \label{sec:application_env:jllisting:stiefeladam}
\end{figure}
In \stexttt{fn\_adam\_optimizer\_stiefel.jl} we implemented the StiefelAdam optimizer routine specified in Algorithm~\ref{sec:application_env:algorithm_name:ModifiedAdam}. 
The implementation is displayed in Figure~\ref{sec:application_env:jllisting:stiefeladam}.
Similar to the homogeneous Adam step, we did this without a~decay rate. 
In \stexttt{fn\_adam\_optimizer\_stiefel\_withDecay.jl} we added the same StiefelAdam routine with the only difference that now we apply 
a slight decay 
$$ \eta \leftarrow \eta \cdot 0.9995$$
to the learning rate $\eta$ in each iteration. The decay is applied in the \stexttt{optimization\_step!} function in the file \stexttt{fn\_optimizer\_v020.jl}.
Finally, we use the Cayley retraction and two different vector transports along Cayley to complete the update step. 
The Cayley retraction on $\St(n,N)$ is calculated in~\eqref{sec:manifold_env:align_name:compactstiefelSMWformulacayleywithrightmultiplication}. 
We use the submanifold and the differentiated vector transports on Cayley discussed in Section~\ref{sec:manifold_subsubsubsec:vectortransport}. 
The submanifold transport is computed in \eqref{sec:manifold_env:align_name:compactstiefelvectranssub}, the differentiated transport in \eqref{sec:manifold_env:align_name:compactstiefelvectransdiffSMW}.
Both the vector transport of the cache $B_1^{\ca}$ together with the retraction of the new iterate are conducted in the same function within the file \stexttt{fn\_retractions\_v020.jl}.
The implementation is displayed in Figure~\ref{sec:application_env:jllisting:vectortransports}.
\begin{figure}[ht!] % h = Here, t = Top, b = Bottom, p = same Page, ! = DO AS I SAY
    \centering
  \begin{jllisting}
    # to work with cayley on Stiefel directly: implements vector transport SUBMANIFOLD
    function cayley!(::SubmanifoldVectortransport, Z::AbstractArray{T}, Φ::AbstractArray{T}, C::AdamCache) where T
        H = Φ.A'*Z
        U = hcat(Z - 0.5*Φ.A*(H - H'), -Φ.A)
        V = vcat(Φ.A',Z')
        VU = V*U
        VΦ = V*Φ
    
        # apply retraction to update the weights
        Φ.A .= Φ.A + 0.5*U* ( VΦ + (one(VU) - 0.5*VU) \ (VΦ + 0.5*VU*VΦ) )
    
        # vector transport *sub* of Cache C.B₁ along Cayley 
        H .= Φ.A'*C.B₁
        C.B₁ .= C.B₁ - 0.5*Φ.A * ( H + H' )
    end
    
    # to work with cayley on Stiefel directly: implements vector transport DIFFERENTIAL
    function cayley!(::DifferentialVectortransport, Z::AbstractArray{T}, Φ::AbstractArray{T}, C::AdamCache) where T
        H = Φ.A'*Z
        U = hcat(Z - 0.5*Φ.A*(H - H'), -Φ.A)
        V = vcat(Φ.A',Z')
        VU = V*U
        VΦ = V*Φ
    
        # vector transport *diff* of Cache C.B₁ along Cayley 
        H .= Φ.A'*C.B₁
        U_B = hcat(C.B₁ - 0.5*Φ.A*(H - H'), -Φ.A)
        V_B = vcat(Φ.A',C.B₁')
        VU_B_half = 0.5*(V*U_B)
        mat_to_be_inv = one(VU) - 0.5*VU
        C.B₁ .= (U_B + U * (mat_to_be_inv \ VU_B_half)) * (V_B * (Φ.A + U * (mat_to_be_inv \ (0.5*VΦ))))
    
        # apply retraction to update the weights
        Φ.A .= Φ.A + 0.5*U* ( VΦ + (one(VU) - 0.5*VU) \ (VΦ + 0.5*VU*VΦ) )
    end
  \end{jllisting}
    \caption{Implementation of the vector transport and the Cayley retraction on $\St(n,N)$.}
    \label{sec:application_env:jllisting:vectortransports}
\end{figure}

Note that along with the StiefelAdam optimizer, we implemented a~bunch of other optimizers. These are
\begin{itemize}
  \item StiefelStochasticGradientDescentWithMomentum,
  \item StiefelStochasticGradientDescentWithMomentumWithDecay and
  \item StiefelCayleyAdamOptimizerFromOtherPaper.
\end{itemize}
%The other approaches are not further investigated in this thesis as our numerical experiments showed the best results for StiefelAdam.
Since the StiefelAdam (with decay) worked best in all constellations we tested, we did not include any further investigation and 
numerical experiments on the other optimizers in this thesis.

\subsubsubsection{Comparison of update step costs}
Since the autoencoder is supposed to reduce a~high dimensional Hamiltonian system, it is important to know the costs of each update step 
depending on the full dimension $N$ and the reduced dimension $n$, where $n \ll N$. 

For the homogeneous space update setup, we summarize the costs in Table~\ref{sec:application_env:table_name:costshomogeneousadamupdatestep}. 
First, we must compute the canonical Riemannian gradient. According to~\eqref{sec:manifold_env:align_name:compactstiefelcanonicalriemanniangradient}, 
this can be done in $\O(Nn^2)$. Next, we must compute the \stexttt{section} $\lambda_E(X)$. Analogous to \cite{main:brantner2023}, we use
a~QR-decomposition (see Algorithm~\ref{sec:manifold_env:algorithm:compactstiefelalgosection}) of a~$N \times (N-n)$ matrix. 
This causes costs of ${\O(N(N-n)^2)}$, which makes this the most expensive task for $n \ll N$. 
In the next step we \stexttt{lift} the Riemannian gradient to the global tangent space $\g^{hor,E}$, see \eqref{sec:manifold_env:align_name:compactstiefellift+sectionproduct}. 
This comes at costs of $\O(N(N-n)n)$.
Now, we apply the classic Adam optimizer on an~element of $\g^{hor,E}$.
As shown in \cite{adam:brantner2023}, the elements of $\g^{hor,E}$ have at most $Nn$ non-zero entries.  
Since the classical Adam optimizer consists only of point-wise operations, we get costs $\O(Nn)$ in the number of non-zero entries 
of the elements of $\g^{hor,E}$. 
Finally, we apply the \stexttt{retract} operation using the Cayley retraction \eqref{sec:manifold_env:align_name:orthogonalgroupcayleyretraction}.
As shown in \eqref{sec:manifold_env:align_name:orthogonalgroupSMWformulacayleywithrightmultiplication} and \eqref{sec:manifold_env:align_name:compactstiefelhomogeneouscayleyE}, 
we can compute the whole \stexttt{retract} step in $\O(Nn^2)$.

The costs of one StiefelAdam update step are shown in Table~\ref{sec:application_env:table_name:costsstiefeladamupdatestep}. As before, 
we first calculate the Riemannian gradient. Here, we have the choice between the canonical and the Euclidean Riemannian gradient 
given in \eqref{sec:manifold_env:align_name:compactstiefeleuclideanriemanniangradient} and \eqref{sec:manifold_env:align_name:compactstiefelcanonicalriemanniangradient}. 
Both come at costs of $\O(Nn^2)$. Second, we apply our proposed \enquote{pseudo-Adam} optimizer step described in Algorithm~\ref{sec:application_env:algorithm_name:ModifiedAdam} 
on the compact Stiefel manifold tangent spaces. Here, we have to extract the skew-symmetric and the orthogonal components of the tangent vectors and apply point-wise operations. 
This causes costs of $\O(Nn^2)$. Finally, we \stexttt{retract} via the Cayley retraction \eqref{sec:manifold_env:align_name:compactstiefelSMWformulacayleywithrightmultiplication} 
and apply either the submanifold vector transport along Cayley (see~\eqref{sec:manifold_env:align_name:compactstiefelvectranssub}) or the 
differentiated vector transport along Cayley (see~\eqref{sec:manifold_env:align_name:compactstiefelvectransdiffSMW}). 
All three can be computed in $\O(Nn^2)$. 

In total, we get costs of $\O(N^2(N-n)+Nn^2)$ for the homogeneous space update step versus costs of $\O(Nn^2)$ for the direct approach on the 
compact Stiefel manifold. 
A~speed test in Section~\ref{sec:numericalexperiments_subsec:speedtest} for each of the two different approaches shows that the lower costs of 
the StiefelAdam update step results in a~significant speed-up for high dimensions $N$ and a~reduced dimension $n \ll N$.

%The application of the classic Adam optimizer on the global tangent space only caused costs of $\O(Nn)$ and thus is the cheapest part 
%of the whole homogeneous update step. This is due to the dimension of the non-zero entries of an~element of $\g^{hor,E}$.
As mentioned in Section~\ref{sec:manifold_subsubsubsec:gradientlayer}, we apply the classic Adam optimizer on GradientLayers as well. 
Interestingly, since we chose $L=5N$ for each GradientLayer in the upscaling dimension, we end up with costs of $O(N^2)$ for one Adam iteration here. 
This is quite expensive and can cause some critical performance issues for high dimensions $N$. 

\begin{table}[ht!]
\centering
\begin{tabular}{c|c|c|c} 
 %\multicolumn{3}{c}{Testing: Values} \\
 %\hline
 \textbf{Step}  & \textbf{Configuration} & \textbf{Formula} & \textbf{Computational costs} \\
 \hline
 \stexttt{rgrad}    & Canonical    &   \eqref{sec:manifold_env:align_name:compactstiefelcanonicalriemanniangradient}  & $\O(Nn^2)$ \\ 
 \hline
 \stexttt{section}  & ---          &   \eqref{sec:manifold_env:algorithm:compactstiefelalgosection}                   & $\O(N(N-n)^2)$ \\
 \hline
 \stexttt{lift}     & ---          &   \eqref{sec:manifold_env:align_name:compactstiefellift+sectionproduct}          & $\O(N(N-n)n)$ \\
 \hline
 \stexttt{Adam}     & ---          &   \eqref{sec:application_env:algorithm_name:classicAdam}                         & $\O(Nn)$ \\
 \hline
 \stexttt{retract}  & Cayley       &   \eqref{sec:manifold_env:align_name:orthogonalgroupcayleyretraction}, \eqref{sec:manifold_env:align_name:orthogonalgroupSMWformulacayleywithrightmultiplication}, \eqref{sec:manifold_env:align_name:compactstiefelhomogeneouscayleyE}   & $\O(Nn^2)$ \\
\end{tabular}
\caption{Computational costs of the single components of an~update step of Adam on a~homogeneous space in Algorithm~\ref{sec:application_env:algorithm_name:PSDLayerUpdatestep}.}
\label{sec:application_env:table_name:costshomogeneousadamupdatestep}
\end{table}

\begin{table}[ht!]
\centering
\begin{tabular}{c|c|c|c} 
 %\multicolumn{3}{c}{Testing: Values} \\
 %\hline
 \textbf{Step}  & \textbf{Configuration} & \textbf{Formula} & \textbf{Computational costs} \\
 \hline
 \stexttt{rgrad}           & Canonical                                             &  \eqref{sec:manifold_env:align_name:compactstiefelcanonicalriemanniangradient}              & $\O(Nn^2)$ \\ 
                          & Euclidean                                             &  \eqref{sec:manifold_env:align_name:compactstiefeleuclideanriemanniangradient}              & $\O(Nn^2)$ \\ 
 \hline
 \stexttt{StiefelAdam}     & ---                                                   &  \eqref{sec:application_env:algorithm_name:ModifiedAdam}                                    & $\O(Nn^2)$ \\
 \hline
 \stexttt{retract}         & Cayley                                                &  \eqref{sec:manifold_env:align_name:compactstiefelSMWformulacayleywithrightmultiplication}  & $\O(Nn^2)$ \\
 \hline
 \stexttt{transport}       & submanifold transport                                 &  \eqref{sec:manifold_env:align_name:compactstiefelvectranssub}                              & $\O(Nn^2)$ \\
                          & differentiated transport                              &  \eqref{sec:manifold_env:align_name:compactstiefelvectransdiffSMW}                          & $\O(Nn^2)$ \\
\end{tabular}
\caption{Computational costs of the single components of an~update step of StiefelAdam on $\St(n,N)$ in Algorithm~\ref{sec:application_env:algorithm_name:PSDLayerModifiedAdamStep}.}
\label{sec:application_env:table_name:costsstiefeladamupdatestep}
\end{table}

\subsubsubsection{Summary}
We summarize the default optimizer configuration of GML and our modifications and extensions in Table~\ref{sec:application_env:table_name:variantsNetwork}.
Note that in our manifold update step configuration, we have implemented two different choices for the Riemannian gradient 
and the vector transport (which we both tested) summing up to four possible parameter constellations. 
Since we tested our optimizer both with and without a~decay, we end up with a~total of eight possible constellations.
\begin{table}[ht!]
\centering
\begin{tabular}{|p{3.5cm}||p{4.7cm}|p{5cm}|} 
 \hline
 \multicolumn{3}{|c|}{Network configuration values} \\
 \hline
 \textbf{Element}  & \textbf{Reference configuration} & \textbf{New configuration} \\
 \hline
 Manifold update step        & Rgrad: Canonical                         & Rgrad: Canonical/Euclidean                      \\
                             & Optimizer: Adam                          & Optimizer: StiefelAdam                          \\
                             & Retraction $R_{I_N}^{\Orth(N)}$: Cayley      & Retraction $R_X^{\St(n,N)}$: Cayley             \\
                             & ---                                      & Vector transport: sub/diff                          \\
                             & ---                                      & Decay rate: ($\eta \leftarrow \eta \cdot 0.9995$)          \\
\hline
\hline
 \multicolumn{3}{|c|}{Functional GML modifications} \\
 \hline
 \textbf{Element}  & \textbf{Reference configuration} & \textbf{New configuration} \\
 \hline
 Manifold update step        & ---                                      & \small{\stexttt{optimizer.jl, retractions.jl, stiefel\_manifold.jl, init\_optimizer\_chache.jl, utils.jl, batch.jl}}                      \\
 \hline
 \hline
 \multicolumn{3}{|c|}{Computational costs} \\
 \hline
 \textbf{Element}  & \textbf{Reference configuration} & \textbf{New configuration} \\
 \hline
 Manifold update step        & $\O(N^2(N-n)+Nn^2)$      & $\O(Nn^2)$                 \\
 \hline
\end{tabular}
\caption{Variants of network configurations.}
\label{sec:application_env:table_name:variantsNetwork}
\end{table}

\subsubsection{Training configurations}

Regarding the training setup for our autoencoder network, we applied two changes in comparison to the reference setup in \cite{main:brantner2023}:
\begin{itemize}
  \item We applied an~epoch-wise training routine.
  \item We used normalized data.
\end{itemize}

\newpage
\subsubsubsection{Epoch-wise training}
At the time the reference configuration implemented in \cite[scripts/symplectic\_autoencoder]{githubGML:brantner2024} was published, the latest release 
version of GML was v$0.1.0$.
This version did not yet offer epoch-wise learning as it is suggested for the training of neural networks in general. 
First, we implemented an~exact reproduction of the learning setup presented in \cite{main:brantner2023}. 
Here, the training is conducted as follows:
\begin{itemize}
  \item[1.] Specify the number of \enquote{epochs} in the variable \stexttt{n\_epochs}.
  \item[2.] Specify a~batch size in the variable \stexttt{batch\_size}.
  \item[3.] Calculate the number of iterations: \\ $\tt n\_training\_iterations = \tt n\_epochs \cdot \tfrac{\text{\#(number of snapshots)}}{\tt batch\_size}$.
  \item[4.] Draw a~batch from the whole data set in each iteration \stexttt{1:n\_training\_iterations}.
\end{itemize}
The code for this is displayed in Figure~\ref{sec:application_env:jllisting:non-epoch-wisetraining}. 
The problem with this setup is that, unlike in normal epoch-wise training, we do not ensure that we see the entire data set once per epoch.
Here, we basically just use \stexttt{n\_epochs} as a~multiplicator of how often we want to draw a~batch from the whole data set.
Thus, we will refer to this setup as \textit{non-epoch-wise} training.
\begin{figure}[ht!] % h = Here, t = Top, b = Bottom, p = same Page, ! = DO AS I SAY
    \centering
  \begin{jllisting}
    n_training_iterations = Int(ceil(n_epochs*dl.n_params/batch_size))

    for _ in 1:n_training_iterations
      draw_batch!(batch, data)
      ...
    end
  \end{jllisting}
    \caption{Non-epoch-wise training.}
    \label{sec:application_env:jllisting:non-epoch-wisetraining}
\end{figure}

A~correct epoch-wise training routine was added in GML release version v$0.2.0$. In an~alternative configuration, we used the available 
GML building blocks to implement a~proper epoch-wise training routine for our network. The code for this is displayed in Figure~\ref{sec:application_env:jllisting:epoch-wisetraining}.
\begin{figure}[ht!] % h = Here, t = Top, b = Bottom, p = same Page, ! = DO AS I SAY
    \centering
  \begin{jllisting}
    for _ in 1:n_epochs
        optimize_for_one_epoch!(optimizer_instance, model, ps, dl, batch)
        ...
    end
  \end{jllisting}
    \caption{Epoch-wise training.}
    \label{sec:application_env:jllisting:epoch-wisetraining}
\end{figure}

\subsubsubsection{Training data}
The reference setup from \cite{main:brantner2023} uses unnormalized data. In our case, this means the training set
$$\X(\P_{\text{train}}) = \left\{ x^k_\mu \; \big| \; 0 \leq k \leq K, \mu \in \P_{\text{train}} \right\}$$
consists of unnormalized model snapshots $x^k_\mu$. Similar to the PSD method, a~reasonable choice for the reduced initial value for unnormlized  
snapshots is $x_{r,\mu}^0 :=e(x_\mu^0)$.

As stated in Section~\ref{sec:application_subsubsubsec:approachestosolvetheoptprob}, autoencoders should be trained with normalized data. 
Thus, we implemented an~alternative configuration using a~normalized training set 
$$\X(\P_{\text{train}}) = \left\{ x^k_\mu - x^0_\mu \; \big| \; 0 \leq k \leq K, \mu \in \P_{\text{train}} \right\}$$
as we proposed in \eqref{sec:application_env:align_name:trainingset}. Here, we choose $x_{r,\mu}^0 :=e(0)$.
The choice of normalized data itself did not affect GML because the choice of training data is not handled by GML.
But the different choices of the reduced initial value required some functional adaptions in \stexttt{reduced\_system.jl}.

\subsubsubsection{Summary}
We summarize the reference configuration and our modifications in Table~\ref{sec:application_env:table_name:variantsTraining}.
Note that we combine our normalized data configuration only with the ROM setup that uses a~reference state $x_{\reff,\mu}$ (see Table~\ref{sec:application_env:table_name:variantsFOMROM})
and the unnormalized setup only with the ROM that does not use a~reference state.
\begin{table}[ht!]
\centering
\begin{tabular}{|p{3.5cm}||p{4.7cm}|p{4.5cm}|} 
 \hline
 \multicolumn{3}{|c|}{Training configuration values} \\
 \hline
 \textbf{Element}  & \textbf{Reference configuration} & \textbf{New configuration} \\
 \hline
 Training data $\X$          & Unnormalized                 &  Normalized   \\
                             & $x_{r, \mu}^0=e(x_{\mu}^0)$  & $x_{r, \mu}^0=e(0)$  \\
 [0.2cm]
 Epoch-wise training         & No            & Yes           \\
\hline
\hline
 \multicolumn{3}{|c|}{Functional GML modifications} \\
 \hline
 \textbf{Element}  & \textbf{Reference configuration} & \textbf{New configuration} \\
 \hline
 Training data $\X$          & ---                    & \small{\stexttt{reduced\_system.jl}} \\
 [0.2cm]
 Epoch-wise training         & ---                    & ---                    \\
 \hline
\end{tabular}
\caption{Variants of training configurations.}
\label{sec:application_env:table_name:variantsTraining}
\end{table}

\subsubsection{Testing configurations}
After training the autoencoder, we perform a~quality assessment of the obtained approximations 
by computing two different approximation errors and comparing them with the results of a~reference method.

\subsubsubsection{Approximation errors}
Similar to the reference setup, we use the 
\begin{itemize}
  \item \textit{reduction error} $e_{\red}(\mu)$ and 
  \item \textit{projection error} $e_{\proj}(\mu)$
\end{itemize}
to estimate the quality of the obtained approximations.
The reduction error takes the discrete solutions of the ROM $(x_{r,\mu}^k)_{k=0}^K$ for a~fixed system parameter $\mu \in \P_{\train}$ 
and compares the reconstructions via the decoder $d$ to the exact solutions $(x_\mu^k)_{k=0}^K$ of the FOM. For the ROM setup without the reference state $x_{\reff,\mu}$, this is: 
$$e_{\red}(\mu) = \sqrt{\tfrac{\sum_{k=0}^{K}\norm{x_{\mu}^k-d \left(x_{r, \mu}^k\right)}^2_2 }{ \sum_{k=0}^{K} \norm{x_{\mu}^k}^2_2 }}.$$ 
For our proposed setup with the reference state, we get
$$e_{\red}(\mu) = \sqrt{ \frac{ \sum_{k=0}^{K} \norm{x_{\reff,\mu} + d \left(x_{r,\mu}^k\right) - x_{\mu}^k}^2_2}{ \sum_{k=0}^{K} \norm{x_{\mu}^k}^2_2} }.$$
The projection error takes the discrete exact solutions of the FOM $(x_\mu^k)_{k=0}^K$ for a~fixed system parameter $\mu \in \P_{\train}$ 
and computes the error between the exact solutions and the approximations obtained by the application of the target function $((d \circ e)(x_\mu^k))_{k=0}^K$. 
For the reference configuration without the reference state $x_{\reff,\mu}$, this is 
$$e_{\proj}(\mu) = \sqrt{ \tfrac{ \sum_{k=0}^{K} \norm{x_{\mu}^k - (d \circ e)\left(x_{\mu}^k\right)}^2_2 }{ \sum_{k=0}^{K} \norm{x_{\mu}^k}^2_2}}.$$
For the setup with the reference state, we have
$$e_{\red}(\mu) = \sqrt{ \frac{ \sum_{k=0}^{K} \norm{x_{\reff,\mu} + (d \circ e)\left(x_{\mu}^k - x_{\reff,\mu}\right) - x_{\mu}^k}^2_2}{ \sum_{k=0}^{K} \norm{x_{\mu}^k}^2_2} }.$$

\noindent Note that both errors are normalized by the norm of the exact FOM solutions. 
To get the new ROM configuration work with GML, we had to modify the GML-file \stexttt{reduced\_system.jl}.

\subsubsubsection{Comparison method}
We compare our error results (in all configurations) to the errors obtained for the PSD reduction method discussed in Section~\ref{sec:application_subsubsubsec:approachestosolvetheoptprob}.
This method  was also chosen for the reference configuration tested in \cite{main:brantner2023} using an~unnormalized snapshot matrix for the encoder and decoder 
calculation and a~ROM without a~reference state. 
In our implementation of the \stexttt{sineGordon} example, we conducted some tests with the PSD method working on normalized snapshots and a~ROM 
with reference state. But, since we wanted to keep a~consistent and unchanged comparison method, we did not include these tests in this thesis 
and decided to always use the PSD comparison method with unnormalized data and no reference state in all tested variants.
The comparison method is not part of GML, and thus we did not need to modify any GML-files.

\subsubsubsection{Summary}
We summarize the testing configurations in Table~\ref{sec:application_env:table_name:variantsTesting}.
\begin{table}[ht!]
\resizebox{\textwidth}{!}{%
\centering
\begin{tabular}{|p{4cm}||p{4.7cm}|p{5.2cm}|} 
 \hline
 \multicolumn{3}{|c|}{Testing configuration values} \\
 \hline
 \textbf{Element}  & \textbf{Reference configuration} & \textbf{New configuration} \\
 \hline 
 Reduction error $e_{\text{red}}(\mu)$            & {\small$\sqrt{\tfrac{\sum_{k=0}^{K}\norm{x_{\mu}^k-d \left(x_{r, \mu}^k\right)}^2_2 }{ \sum_{k=0}^{K} \norm{x_{\mu}^k}^2_2 }}$}     & {\small$\sqrt{ \frac{ \sum_{k=0}^{K} \norm{x_{\reff,\mu} + d \left(x_{r,\mu}^k\right) - x_{\mu}^k}^2_2}{ \sum_{k=0}^{K} \norm{x_{\mu}^k}^2_2} }$}               \\
 [0.3cm]
 Projection error $e_{\text{proj}}(\mu)$          & {\small$\sqrt{ \tfrac{ \sum_{k=0}^{K} \norm{x_{\mu}^k - (d \circ e)\left(x_{\mu}^k\right)}^2_2 }{ \sum_{k=0}^{K} \norm{x_{\mu}^k}^2_2}}$}               & {\small$\sqrt{ \frac{ \sum_{k=0}^{K} \norm{x_{\reff,\mu} + (d \circ e)\left(x_{\mu}^k - x_{\reff,\mu}\right) - x_{\mu}^k}^2_2}{ \sum_{k=0}^{K} \norm{x_{\mu}^k}^2_2} }$}   \\
 [0.5cm]
 Comparison method                                & PSD method                                                                                                                                & PSD method                                                                                                                                 \\
 %[0.3cm]
\hline
\hline
 \multicolumn{3}{|c|}{Functional GML modifications} \\
 \hline
 \textbf{Element}  & \textbf{Reference configuration} & \textbf{New configuration} \\
 \hline
 Reduction error             & ---                    & \small{\texttt{reduced\_system.jl}}  \\
 [0.2cm]
 Projection error            & ---                    & \small{\texttt{reduced\_system.jl}}  \\
 [0.2cm]
 Comparison method           & ---                    & ---  \\
 \hline
\end{tabular}
}
\caption{Variants of testing configurations.}
\label{sec:application_env:table_name:variantsTesting}
\end{table}

\newpage
\subsubsection{Complete learning configurations}
We display some complete learning setups in Table~\ref{sec:application_env:table_name:completelearningsetups}. 
The first configuration V$1$, referred to as \enquote{Non-epoch-wise+unnormalized+lossGML}, is the exact reference setup described in \cite{main:brantner2023} and 
implemented in \cite[scripts/symplectic\_autoencoder]{githubGML:brantner2024}. \\
Configuration V$2$ (\enquote{Epoch-wise+unnormalized+lossGML}) changes the training routine to an~epoch-wise training. Setup V$3$ (\enquote{Epoch-wise+normalized+lossGML}) 
keeps the epoch-wise routine and additionally uses normalized data for the training. As proposed by \cite{weaklysymp:buchfink2021}, we combine this 
with the new ROM configuration using the reference state and consequently using the new reduction and projection errors in the testing stage,
which make use of the reference state.  Since our numerical experiments (see chapter~\ref{sec:numericalexperiments}) yield better results 
for this configuration than V$1$ (and V$2$), we keep this configuration as the new basis for further variants.
Variant V$4$ (\enquote{Epochs-wise+normalized+lossTheory}) tries the alternative loss we proposed for the theoretical problem setup. \\
Configurations V$5$-V$10$ all use the new optimizer update step StiefelAdam in different variants. 
Since the numerical experiments did not yield any improvements for the alternative loss, we returned to the original relative-error loss function implemented in GML for most of these variants. \\
First, in variant V$5$ (\enquote{StiefelAdam(can,sub)+epochs-wise+normalized+lossGML}), we apply the StiefelAdam with the canonical Riemannian gradient and the submanifold vector transport without a decay. \\
Next, in V$6$ (\enquote{StiefelAdamWithDecay(can,sub)+epochs-wise+normalized+lossGML}), we add a~decay. To highlight the difference to StiefelAdam without decay, 
we named this optimizer \linebreak StiefelAdamWithDecay.
In variants V$7$-V$9$, we try all other combinations of the Riemannian gradients and vector transports. 
Since V$6$ with decay worked far better than V$5$ without decay, we used StiefelAdamWithDecay for all these variants. \\
\noindent Finally, in variant V$10$ (\enquote{StiefelAdamWithDecay(can,sub)+epochs-wise+normalized+lossTheory}), we combine the alternative loss with the new optimizer approach.
\begin{table}[ht!]
\definecolor{lightblue}{RGB}{173, 216, 230}
\rowcolors{3}{lightblue}{lightblue}
\resizebox{\textwidth}{!}{%
\centering
\begin{tabular}{c|c|c|c|c|c|c} 
   \textbf{Variant}       & \multicolumn{2}{c|}{\textbf{Problem setup}}       & \textbf{Network}                                      & \multicolumn{2}{c|}{\textbf{Training}}                          & \textbf{Testing} \\
                          & \textbf{ROM}          & \textbf{Loss}             &                                                       & \textbf{Data}                  & \textbf{Epoch}                 &                  \\
 \hline
 \cellcolor{white} V$1$     & Ref.                  & Ref.                      & Ref.                                                  & Ref.                           & Ref.                           & Ref.        \\ 
 \hline               
 \cellcolor{white} V$2$     & Ref.                  & Ref.                      & Ref.                                                  & Ref.                           & \cellcolor{green} New          & Ref.        \\ 
 \hline               
 \cellcolor{white} V$3$     & \cellcolor{green} New & Ref.                      & Ref.                                                  & \cellcolor{green} New          & \cellcolor{green} New          & \cellcolor{green} New \\ 
 \hline               
 \cellcolor{white} V$4$     & \cellcolor{green} New & \cellcolor{green} New     & Ref.                                                  & \cellcolor{green} New          & \cellcolor{green} New          & \cellcolor{green} New \\ 
 \hline               
 \cellcolor{white} V$5$     & \cellcolor{green} New &  Ref.                     & \cellcolor{green} New (canonical,sub-trans)           & \cellcolor{green} New          & \cellcolor{green} New          & \cellcolor{green} New \\ 
 \hline               
 \cellcolor{white} V$6$     & \cellcolor{green} New &  Ref.                     & \cellcolor{green} New (canonical,sub-trans,decay)     & \cellcolor{green} New          & \cellcolor{green} New          & \cellcolor{green} New \\ 
 \hline               
 \cellcolor{white} V$7$     & \cellcolor{green} New &  Ref.                     & \cellcolor{green} New (canonical,diff-trans,decay)    & \cellcolor{green} New          & \cellcolor{green} New          & \cellcolor{green} New \\ 
 \hline               
 \cellcolor{white} V$8$     & \cellcolor{green} New &  Ref.                     & \cellcolor{green} New (euclidean,sub-trans,decay)     & \cellcolor{green} New          & \cellcolor{green} New          & \cellcolor{green} New \\ 
 \hline               
 \cellcolor{white} V$9$     & \cellcolor{green} New &  Ref.                     & \cellcolor{green} New (euclidean,diff-trans,decay)    & \cellcolor{green} New          & \cellcolor{green} New          & \cellcolor{green} New \\ 
 \hline                
 \cellcolor{white} V$10$    & \cellcolor{green} New &  \cellcolor{green} New    & \cellcolor{green} New (canonical,sub-trans,decay)     & \cellcolor{green} New          & \cellcolor{green} New          & \cellcolor{green} New \\ 
 \hline
\end{tabular}
}
\caption{Overview of tested configurations.}
\label{sec:application_env:table_name:completelearningsetups}
\end{table}

\newpage
\subsubsection{Nonfunctional GML modifications}
In the course of the GML adjustments, we also had to make some nonfunctional changes. These include
\begin{itemize}
  \item adjustments that provide more process information, such as integration time and 3D plots of the reconstructed solutions,
  \item technical adjustments that made it possible to integrate the use of different options to choose the Riemannian gradient, 
  the vector transport and the loss function,
  \item code errors that we needed to resolve to make the code work with our data setup, etc. and
  \item efficiency improvements that we implemented to make some parts of GML work faster.
\end{itemize}
We do not want to describe all of these adjustments in detail here. We only point out some of our efficiency changes:
\begin{itemize}
  \item \stexttt{adam\_optimizer.jl}: A~classical Adam update step requires the exponential $\beta_i^t$ of the Adam parameters $\beta_i$ for $i=1,2$ in every iteration $t$. 
  In GML, this is done by calculating $\beta_i^t$ several times in each step. This becomes an~expensive task especially for large iteration numbers $t$. 
  We did not change this for the built-in GML optimizer step since we did not consider the efficiency gain to have a large enough impact to justify the modification of the GML library here. 
  But for our StiefelAdam optimizer step, where we had to implement the optimizer step anyway, we changed this by storing two more Adam parameters 
  $(\beta\_t)_i$ for $i=1,2$, and updating these parameters in every step by a~single multiplication operation 
  $$ (\beta\_t)_i \leftarrow (\beta\_t)_i \cdot \beta_i$$
  implemented in \stexttt{fn\_optimizer.jl}. Note that in GML (and our adjustments) the Adam parameters $\beta_i$ are named $\rho_i$.
  \item \stexttt{reduced\_system.jl}: The method \\ 
  \stexttt{reduced\_vector\_field\_from\_full\_explicit\_vector\_field(...)}
  conducts full matrix multiplications with the Poisson matrix (both in the full dimension $2N$ and the reduced dimension $2n$) to obtain the reduced Hamiltonian. 
  This becomes very expensive for high dimensions $N \in \N$, 
  especially since the computation of the vector field is needed for the reduced integration. We changed this calculation by using the 
  shape of the Poisson matrix, see Figure~\ref{sec:application_env:jllisting:largematrixmulit}. 
  Note that this inefficiency has been removed in GML release version v$0.3.0$.
  %but similar inefficient computations concerning full Poisson matrix multiplications are still present in some other GML files such as \stexttt{symplectic\_stiefel\_manifold.jl}.
\begin{figure}[ht!] % h = Here, t = Top, b = Bottom, p = same Page, ! = DO AS I SAY
    \centering
  \begin{jllisting}
function reduced_vector_field_from_full_explicit_vector_field(full_explicit_vector_field, decoder, N::Integer, n::Integer)
    function reduced_vector_field(v, t, ξ, params)
        x = full_explicit_vector_field(t, decoder(ξ), params)
        Del = ForwardDiff.jacobian(decoder, ξ)'
        v .=  vcat(-Del[n+1:2*n,:], Del[1:n,:]) * vcat(x[N+1:2*N],-x[1:N])
        # Implementation in v0.2.0:
        # v .= -SymplecticPotential(2*n) * ForwardDiff.jacobian(decoder, ξ)' * SymplecticPotential(2*N) * full_explicit_vector_field(t, decoder(ξ), params)
    end
    reduced_vector_field
end
  \end{jllisting}
    \caption{Efficiency improvements applied in GML to compute the reduced Hamiltonian.}
    \label{sec:application_env:jllisting:largematrixmulit}
\end{figure}
\end{itemize}

\clearpage
\section{Numerical experiments} \label{sec:numericalexperiments}

In this section, we present the numerical results obtained for the different learning setups we tested. 
First, we compare the two different manifold update step approaches in terms of numerical efficiency.
Second, we present the results for the \textit{1D linear wave equation} for different learning configurations. 
Then we do the same for the \textit{1D sine-Gordon equation}.

\subsection{Numerical efficiency of the manifold update step} \label{sec:numericalexperiments_subsec:speedtest}
We compared the numerical efficiency of a~single manifold update step using the HomogeneousAdam optimizer implemented in GML 
to our update step approach using the StiefelAdamOptimizerWithDecay in all implemented Riemannian metric and vector transport combinations.
%a~modified Adam optimizer (with decay) on the Stiefel manifold directly.
To do this, we created two scripts 
\begin{itemize}
  \item \stexttt{fn\_speed\_test\_homogeneous\_optimizer\_step.jl} and 
  \item \stexttt{fn\_speed\_test\_alternative\_optimizer\_step.jl}.
\end{itemize}
The setup of these files is pretty similar: First, we chose a~pair $(N,n)$ of a~very large full dimension $N$ and a~small reduced dimension $n\ll N$. 
Next, we instantiate the required objects for each optimizer step with standard parameters and initialize  
a~PSDLayer with the default initial values. Then we add a~one-matrix as the update vector used by the (modified) Adam algorithm.
Finally, we conduct two update iterations using the \stexttt{optimization\_step!} function defined separately for each setup.  
When the function is called for the first time, all objects must be loaded initially. 
This distorts the actual calculation time of an~update step. Therefore, we first performed a~pre-iteration before we measured the time of an~\stexttt{optimization\_step!} iteration. 
We displayed these setups in Figure~\ref{sec:application_env:jllisting:speedtestAdamHom} and Figure~\ref{sec:application_env:jllisting:speedtestStiefelAdam}.

The test results for different choices of $N$ and $n$ are displayed in Table~\ref{sec:application_env:table_name:speedtestResults}. 
The calculations were performed on a~\textit{NVIDIA Quadro T2000} GPU for Datatype \stexttt{Float64}. 
We can observe that all StiefelAdam update step variants are multiple times faster than the homogeneous Adam update step for all tested dimension pairs $(N,n)$.
The time difference gets larger the smaller $n$ gets compared to $N$. This is in line with the calculated costs per update step: 
For one homogeneous space update step, we have costs of $\O(N^2(N-n)+Nn^2)$, see Table~\ref{sec:application_env:table_name:variantsNetwork}, 
whereas one StiefelAdam update step costs $\O(Nn^2)$.
Thus, we get the largest speed difference in our test for $N=10000$ and $n=10$.
%Here, the StiefelAdam approach with the Euclidean Riemannian metric and the differential vector transport is more than $800$ times faster than the HomogeneousAdam approach.
Another interesting observation is that doubling the full dimension $N$ did not cause any significant speed reduction of the StiefelAdam approach for all variants, 
whereas for the HomogeneousAdam approach, we exceeded the available GPU memory of $4$ Gigabyte, referred to as \textit{GPU Out of Memory} (GPU OOM), and thus did not get any results.
Comparing the different variants of StiefelAdam, we see that the vector transport is the dominating cost factor. The variants using the 
submanifold vector transport perform worse than the differential vector transport variants for very small reduced dimensions $n$, but perform better for increasing reduced dimensions.
\begin{figure}[ht!] % h = Here, t = Top, b = Bottom, p = same Page, ! = DO AS I SAY
    \centering
  \begin{jllisting}
# one pre-iteration as the first iteration always takes longer than a~normal intermediate iteration
optimization_step!(optimizer_instance, TESTPSDLayer, ps, optimizer_instance.cache, dx)

println("Calculating HomogeneousAdam step...")
timer = 
@timed begin
optimization_step!(optimizer_instance, TESTPSDLayer, ps, optimizer_instance.cache, dx)
end

println("***Time needed: (timer.time)***")
println()
  \end{jllisting}
    \caption{Speed test setup for a~single manifold update step using Adam on homogeneous spaces.}
    \label{sec:application_env:jllisting:speedtestAdamHom}
\end{figure}

\begin{figure}[ht!] % h = Here, t = Top, b = Bottom, p = same Page, ! = DO AS I SAY
    \centering
  \begin{jllisting}
# one preiteration as the first iteration always takes longer than a~normal intermediate iteration
optimization_step!(optimizer_instance, met, vt, TESTPSDLayer, ps, optimizer_instance.cache, dx)

println("Calculating StiefelAdamWithDecay step...")
timer = 
@timed begin
optimization_step!(optimizer_instance, met, vt, TESTPSDLayer, ps, optimizer_instance.cache, dx)
end

println("***Time needed: (timer.time)***")
println()
  \end{jllisting}
    \caption{Speed test setup for a~single manifold update step using StiefelAdamWithDecay on the Stiefel manifold directly.}
    \label{sec:application_env:jllisting:speedtestStiefelAdam}
\end{figure}
\begin{table}[ht!]
\resizebox{\textwidth}{!}{%
\centering
\begin{tabular}{llllll}
  \hline
  & \textbf{$(10000,10)$} & \textbf{$(20000,10)$} & \textbf{$(30000,10)$} & \textbf{$(10000,100)$} & \textbf{$(10000,1000)$}  \\
  \hline  
  HomogeneousAdam         & 13.788589597          & GPU OOM          & GPU OOM           & 14.292333164          & GPU OOM  \\
  StiefelAdamWithDecay(can, sub)    & 0.063947959          & 0.064044896           & 0.065288955            & 0.100374425           & 3.727560833  \\
  StiefelAdamWithDecay(can,diff)    & 0.01757327           &  0.022213855          & 0.020285467            & 0.147214102           & 6.709052409  \\
  StiefelAdamWithDecay(eucl,sub)    & 0.061689715          &  0.060874198         & 0.060847897            & 0.099162161           & 3.614985748  \\
  StiefelAdamWithDecay(eucl,diff)   & 0.016686088          &  0.022605729         & 0.019532122            & 0.159957555           & 7.009566782  \\
  \hline
\end{tabular}
}
\caption{Computational time for a~single manifold update step for different choices of $(N,n)$ in seconds.}
\label{sec:application_env:table_name:speedtestResults}
\end{table}

\subsection{1D linear wave equation} \label{sec:numericalexperiments_subsec:waveequation}

We consider a~1D linear wave equation  
\begin{align} \label{sec:numericalexperiments_env:align_name:pde1Dwaveequation}
  \partial_{tt}^2u(t,\xi) &=\mu^2 \partial_{\xi \xi}^2u(t,\xi)                            & \quad\quad &\text{in } \I \times \Omega,  \\
  u(0,\xi) &= u_0(\xi) & \quad\quad &\text{on } \Omega, \nonumber \\
  u_t(0,\xi) &= u_1(\xi) & \quad\quad &\text{on } \Omega, \nonumber \\
  u(t,a) &= \varphi(t) & \quad\quad &\text{on } \I, \nonumber \\
  u(t,b) &= \psi(t) & \quad\quad &\text{on } \I \nonumber  
\end{align}
with initial conditions determined by $u_0(\xi), u_1(\xi)$ and boundary conditions determined by $\varphi(t),\psi(t)$.
We define 
\begin{itemize}
  \item $q(t,\xi) := u(t,\xi)$ and 
  \item $p(t,\xi) := \partial_t q(t,\xi) = \partial_t u(t,\xi)$
\end{itemize}
(and thus $\partial_tp(t,\xi) = \mu^2 \partial_{\xi \xi}^2u(t,\xi) = \mu^2 \partial_{\xi \xi}^2q(t,\xi)$) 
and obtain the \textit{continuous Hamiltonian} 
\begin{align} \label{sec:numericalexperiments_env:align_name:continuoushamiltonianlinearwave}
  \H_{cont}(q,p) &= \frac{1}{2}\int_{\Omega}  \mu^2 \left(\partial_{\xi} q(t,\xi)\right)^2 + p(t,\xi)^2  d\xi. 
\end{align}

\subsubsection{Spatial discretization}

Given an~$N\in \N$, we discretize the spatial domain $\Omega=[a,b]$ into $(N+2)$ points $\xi_i := ih_{N} + a$ for $i = 0,...,N+1$ and $h_{N} := \tfrac{b-a}{N+1}$. 
For the discretization of $q(t,\xi)$ and $p(t,\xi)$ we define:
\begin{itemize}
  \item $q_i(t) := q(t,\xi_i) \quad i=0,...,N+1$,
  \item $p_i(t) := p(t,\xi_i) \quad i=0,...,N+1$.
\end{itemize} 
We want to obtain the discretized problem as an~ODE of the form
\begin{align} \label{ode:generaldiscretizedproblem}
  \begin{cases}
    \dot{q} = p \\
    \dot{p} = \L q \\
    q(0) = q_0, p(0) = p_0,
  \end{cases}
\end{align}
where we set 
$q := [q_0,...,q_{N+1}]^T$ and $p := [p_0,...,p_{N+1}]^T$. 
%Note, that the system dimension here is $2N$ and not $2(N+2)$ in difference 
%to the 1D linear wave equation example used in \cite{main:brantner2023}. This is due to how we set the boundary conditions (BC) 
%in our examples.
Here, $\L$ denotes a~linear operator that relies on the method used to discretize the continuous Hamiltonian (see \eqref{sec:numericalexperiments_env:align_name:continuoushamiltonianlinearwave}) and the initial and boundary
conditions used. We use the \textit{rectangular rule} to approximate the integral:
\begin{align} 
  \H_{cont}(q,p) 
  =& \frac{1}{2} \int_{\Omega} \mu^2 \left(\partial_{\xi} q(t,\xi)\right)^2 + p(t,\xi)^2 d\xi \label{calc:midpointrule}\\
  \approx& \frac{h_N}{2} \sum_{i=1}^{N} \left( \mu^2 \left(\partial_{\xi} q_i(t)\right)^2 +  p_i(t)^2 \right) + \frac{h_N}{2}(\varphi(t) + \partial_t\varphi(t)).   \nonumber 
\end{align}
Before we continue, we first need to find an~approximation for $\left(\partial_{\xi} q_i(t)\right)^2$. We use a~combination of \textit{forward} and 
\textit{backward difference methods} to obtain
$$2 \left(\partial_{\xi} q_i(t)\right)^2 \approx \left(\frac{q_{i}(t)- q_{i-1}(t)}{h_N}\right)^2 + \left(\frac{q_{i+1}(t)- q_i(t)}{h_N}\right)^2, \quad i=1,...,N.$$
Now we apply this to \eqref{calc:midpointrule} and get
\begin{align*}
  \H_{cont}(q,p)
 &\approx\frac{h_N}{2} \sum_{i=1}^{N} \left( \mu^2 \frac{(q_i(t)-q_{i-1}(t))^2 + (q_{i+1}(t)-q_{i}(t))^2}{2h_N^2} +  p_i(t)^2 \right)  \\
  &+\frac{h_N}{2}(\varphi(t) + \partial_t\varphi(t)) \nonumber \\
  &= q(t)^TKq(t) + \frac{h_N}{2} p(t)^Tp(t) +\frac{h_N}{2}(\varphi(t) + \partial_t\varphi(t)) \nonumber \\
  &=: \H_{h_N}(q,p), \nonumber
\end{align*}
where we set
\begin{align*}
  K &:=
  \frac{\mu^2}{h_N}
\begin{bmatrix}
  \tfrac{1}{4}  &              &              & &  \\
  -\frac{1}{2}  & \tfrac{3}{4} & -\frac{1}{2} & &  \\
   & \ddots & \ddots & \ddots & \\
   & &-\frac{1}{2}  & \tfrac{3}{4} & -\frac{1}{2}  \\
    &              &              & &\tfrac{1}{4}  \\
\end{bmatrix} \in \R^{N+2}.
\end{align*}
Calculating the gradient of $\H_{h_N}(q,p)$ with respect to $[q^T,p^T]^T$ gives us
\begin{align*}
  \nabla\H_{h_N}(q,p) &= 
  \begin{bmatrix}
     K + K^T && 0 \\
    0 && h_NI_{N+2} \\
  \end{bmatrix}
  \begin{bmatrix}
    q \\ p
  \end{bmatrix}.
\end{align*}  
We now obtain a~\textit{Hamiltonian system} of the form \eqref{ode:generaldiscretizedproblem}. For $\L := -\tfrac{1}{h_N}(K + K^T)$, we get
\begin{align} 
  \begin{bmatrix}
    \dot{q} \\ \dot{p}
  \end{bmatrix}
  &=
  \begin{bmatrix}
    p \\ \L q  
  \end{bmatrix} \label{formula:almosthamiltonianwaveequation} \\
  &= J_{2(N+2)} 
  \begin{bmatrix}
    -\L & 0 \\
    0 & I_{N+2} \\
  \end{bmatrix}
  \begin{bmatrix}
    q \\ p
  \end{bmatrix} \nonumber \\
  &= J_{2(N+2)} \frac{1}{h_N} \nabla \H_{h_N}(q,p). \nonumber
\end{align}
This system almost looks like a~Hamiltonian system for the discretized wave equation. The only problem remaining is the scaling 
factor $\tfrac{1}{h_N}$. To get rid of this, we can apply a~so-called \textit{state-space transformation} by redefining $q,p$ as
\begin{align*} 
  \begin{bmatrix}
    \hat{q} \\ \hat{p}
  \end{bmatrix}
  &:= \sqrt{h_N}
  \begin{bmatrix}
    q \\ p
  \end{bmatrix}
  ,\qquad
  \begin{bmatrix}
    q \\ p
  \end{bmatrix}
  = \sqrt{\frac{1}{h_N}}
  \begin{bmatrix}
    \hat{q} \\ \hat{p}
  \end{bmatrix}.
\end{align*}
This leads to a~\textit{transformed Hamiltonian} without a~scaling factor. 
From a~numerical perspective it does not make any difference if we solve the transformed Hamiltonian 
system or the not transformed \enquote{almost} Hamiltonian system.  
Thus, analogous to \cite{main:brantner2023}, we decided to implement the not transformed system \eqref{formula:almosthamiltonianwaveequation} in our implementation project 
and omitted the details of the transformation.

\subsubsection{Initial and boundary conditions}

Let $\I = [0,T]$ denote the time interval and $\Omega = [a,b]$ the spatial domain. 
If we choose the initial conditions (IC) 
\begin{itemize}
  \item $u_0(\xi) = h(s(\xi))$  
   with
  $ s(\xi) =
    28 |\xi + \frac{1}{2}|$
  and
  $h(s) =
        \begin{cases}
          1 - \frac{3}{2} s^2 + \frac{3}{4}s^3 & 0 \leq s \leq 1 \\
          \frac{1}{4}(2-s)^3 & 1 < s \leq 2 \\
          0 & \text{else,} 
        \end{cases}$,
  \item $u_1(\xi) = -\mu\partial_{\xi} u_0(\xi)$
\end{itemize}
and the boundary conditions (BC)
$$\varphi(t) \equiv \psi(t) \equiv 0$$
we get the so-called \textit{homogeneous Dirichlet} boundary conditions. Similar to \cite{main:brantner2023}, we have used these conditions in our implementation.
Defining $x_\mu(t) := [q(t)^T,p(t)^T]^T \in \R^{2(N+2)}$, we have to solve the $\mu$-dependent FOM
\begin{align}
  \dot{x}_{\mu} &= J_{2(N+2)} \frac{1}{h_{N}} \nabla_{x}\H_{h_N} (x_{\mu}; \mu) x_{\mu} \label{ode:examplewaveequation} \\ 
  &=  
  \begin{bmatrix}
    0 && I_{N+2} \\
    -\frac{1}{h_{N}}(K + K^T) && 0 \\
  \end{bmatrix}
  x_{\mu}, \nonumber \\
  x_{\mu}(0) &= x_{\mu}^0 = 
  \begin{bmatrix}
    (h(s(\xi_i,\mu)))_{i=0}^{N+1} \\ (-\mu \partial_{\xi}h(s(\xi_i, \mu)))_{i=0}^{N+1} \\
  \end{bmatrix} \in \R^{2(N+2)}. \nonumber 
\end{align}

%\subsubsection{General usage of the project}
\subsubsection{Implementation}

\begin{figure}[ht]
    \centering
    \begin{tikzpicture}[scale=0.7] 
    % Define the three boxes
    \foreach \col in {0, 5, 10} {
        \draw (\col, 0) rectangle (\col + 5, -6);
    }
    % Add centered text to each box
    \node at (2.5, -3) {\small{solution for $\mu_1$}};
    \node at (7.5, -3) {...};
    \node at (12.5, -3) {\small{solution for $\mu_{\tt{n\_params}}$}};
    % Horizontal curly bracket below the boxes
    \draw[decorate,decoration={brace,amplitude=10pt,mirror}] (0, -6.5) -- (15, -6.5) node[midway, yshift=-15pt] {$\tt{n\_params}$ $\times (K+1)$};
    % Vertical curly bracket to the left of the boxes, rotated 180 degrees
    \draw[decorate,decoration={brace,amplitude=10pt,mirror}] (-0.5, 0) -- (-0.5, -6) node[midway, xshift=-15pt, rotate=90] {$2(N+2)$};
    \end{tikzpicture}
    \caption{A~schematic depiction of the $2D$-training array with a~specified number \stexttt{n\_params} of variable system 
    parameters $\mu$ and time steps $K$.}
    \label{fig:schematic_data_matrix}
\end{figure}
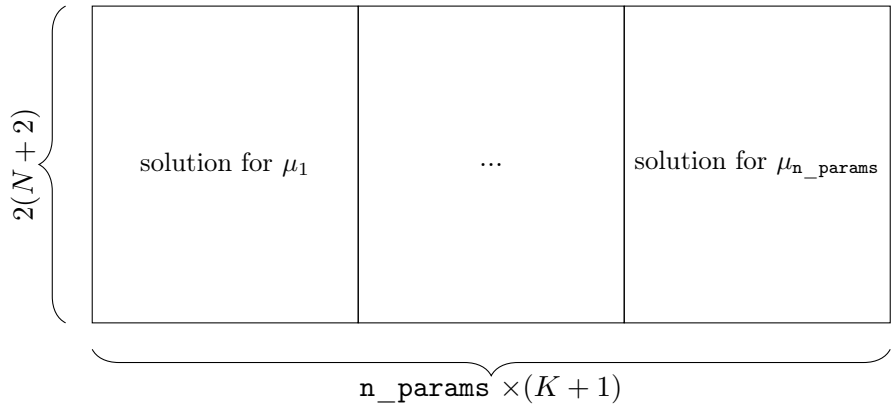
\noindent We implemented all 1D linear wave equation related objects like the Hamiltonian and the initial conditions in 
the folder {\small\texttt{reproduction/general}}. \\
We use integrated solutions for the training data obtained by the \textit{implicit midpoint integrator}. 
The data is generated in {\small\texttt{reproduction/main/fn\_integration\_v020.jl}} and stored as a~$2$D-array. 
The number of rows is the full system dimension $2(N+2)$. The columns are composed of the number of chosen $\mu$-parameters times
the discrete $\mu$-dependent solutions at each time point $t_k = \tfrac{T}{K+1}$, where $K$ denotes the number of discrete time steps chosen. 
A~schematic depiction of the training data matrix is displayed in Figure~\ref{fig:schematic_data_matrix}. \\
The file {\small\texttt{reproduction/main/fn\_generate\_data\_normalized.jl}} produces normalized training data from the 
unnormalized training data matrix.
The errors of the comparison method are computed in \stexttt{reproduction/main/fn\_generate\_psd.jl}. 
The actual training is performed in \stexttt{fn\_training\_v020\_reproduction.jl} with the GML built-in homogeneous space optimizer and in 
\stexttt{fn\_training\_v020\_alternative\_optimizer\_reproduction.jl} with our alternative optimizer approach.
Note that the files containing \enquote{\_v020} in its name are modified versions of existing files from the reference setup implementation 
in \cite[scripts/symplectic\_autoencoder]{githubGML:brantner2024}.

The necessary system parameters and the choices of the training variants (see Table~\ref{sec:application_env:table_name:completelearningsetups})
need to be specified in 
\begin{itemize}
  \item \scripttexttt{reproduction/general/fn\_global\_consts.jl} and
  \item \scripttexttt{alternative\_optimizer/general/fn\_global\_consts\_alternative\_optimizer\_specific.jl}.
\end{itemize}
We summarize the variant specific options in Table~\ref{sec:application_env:table_name:globalconstsrepro} and the 
1D linear wave equation specific parameters in Table~\ref{sec:application_env:table_name:globalconstsrepro1Dspecific}.
The time interval $\I$ and the spatial domain $\Omega$ cannot be chosen as a~parameter. Analogous to \cite{main:brantner2023}, we 
decided to use $\I = [0,1]$ and $\Omega = \left[-\tfrac{1}{2}, \tfrac{1}{2}\right]$ in all test runs for the $1$D linear wave equation.
We use the datatype \stexttt{Float64} for the training snapshots as well as for the actual learning and the testing.
A~description of the chronological steps to be taken to conduct a~training run is displayed in {\small \stexttt{fn\_global\_consts.jl}}.
\begin{table}[ht!]
\resizebox{\textwidth}{!}{%
\centering
\begin{tabular}{c|c|c|c|c|c|c|c|c|c|c} 
 \textbf{Option} & \multicolumn{10}{c}{\textbf{Variant}} \\
 %\hline
                                       & \textbf{V1}       & \textbf{V2}       & \textbf{V3}    & \textbf{V4}     & \textbf{V5}        & \textbf{V6}       & \textbf{V7}       & \textbf{V8}       & \textbf{V9}     & \textbf{V10}\\
 \hline 
 & \multicolumn{10}{c}{{\small \stexttt{fn\_global\_consts.jl}}} \\
 \hline                                                                          
\texttt{LossGML\_global}               & \texttt{true}     & \texttt{true}     & \texttt{true}  & \texttt{false}  & \texttt{true}     & \texttt{true}     & \texttt{true}     & \texttt{true}     & \texttt{true}   & \texttt{false}\\
 \hline                                                                                
\texttt{LossModified\_global}          & \texttt{false}    & \texttt{false}    & \texttt{false} & \texttt{true}   & \texttt{false}    & \texttt{false}    & \texttt{false}    & \texttt{false}    & \texttt{false}  & \texttt{true}\\
 \hline 
\texttt{epochwise\_global}             & \texttt{false}    & \texttt{true}     & \texttt{true}  & \texttt{true}   & \texttt{true}     & \texttt{true}     & \texttt{true}     & \texttt{true}     & \texttt{true}   & \texttt{true}\\
 \hline                                                                                
\texttt{normalized\_global}            & \texttt{false}    & \texttt{false}    & \texttt{true}  & \texttt{true}   & \texttt{true}     & \texttt{true}     & \texttt{true}     & \texttt{true}     & \texttt{true}   & \texttt{true}\\
 \hline                                                                          
 & \multicolumn{10}{c}{{\small \texttt{fn\_global\_consts\_alternative\_optimizer\_specific.jl}}} \\
 \hline                                                                          
\texttt{StiefelAdam\_global}           & \texttt{---}    & \texttt{---}        & \texttt{---}   & \texttt{---}    & \texttt{true}     & \texttt{false}    & \texttt{false}    & \texttt{false}    & \texttt{false}  & \texttt{false}\\
 \hline                                                                              
\texttt{StiefelAdamWithDacay\_global}  & \texttt{---}    & \texttt{---}        & \texttt{---}   & \texttt{---}    & \texttt{false}    & \texttt{true}     & \texttt{true}     & \texttt{true}     & \texttt{true}   & \texttt{true}\\
 \hline                                                                              
\texttt{CanonicalMetric\_global}       & \texttt{---}    & \texttt{---}        & \texttt{---}   & \texttt{---}    & \texttt{true}     & \texttt{true}     & \texttt{true}     & \texttt{false}    & \texttt{false}  & \texttt{true}\\
 \hline                                                                                   
\texttt{EuclideanMetric\_global}       & \texttt{---}    & \texttt{---}        & \texttt{---}   & \texttt{---}    & \texttt{false}    & \texttt{false}    & \texttt{false}    & \texttt{true}     & \texttt{true}   & \texttt{false}\\
 \hline                                                                              
\texttt{SubmanifoldTransport\_global}  & \texttt{---}    & \texttt{---}        & \texttt{---}   & \texttt{---}    & \texttt{true}     & \texttt{true}     & \texttt{false}    & \texttt{true}     & \texttt{false}  & \texttt{true}\\
 \hline                                                                              
\texttt{DifferentialTransport\_global} & \texttt{---}    & \texttt{---}        & \texttt{---}   & \texttt{---}    & \texttt{false}    & \texttt{false}    & \texttt{true}     & \texttt{false}    & \texttt{true}   & \texttt{false}\\
\end{tabular}
}
\caption{Learning options to be set for the different variants tested.}
\label{sec:application_env:table_name:globalconstsrepro}
\end{table}

\begin{table}[ht!]
\resizebox{\textwidth}{!}{%
\centering
\begin{tabular}{c|p{10.5cm}} 
 %\multicolumn{3}{c}{Testing: Values} \\
 %\hline
 \textbf{Parameter}             & \textbf{Explanation}         \\
 \hline 
 {\small\texttt{N\_global}}                        & Number of spatial discretization points $N$; the full dimension of the FOM is $2(N+2)$.                                                                                          \\
 \hline 
 {\small\texttt{n\_range\_global}}                 & Range of reduced dimensions that are supposed to be tested.                                                                                                                       \\
 \hline 
 {\small\texttt{n\_epochs\_global}}                & Number of epochs to be learned. If non-epoch-wise learning is configured, this is a~multiplicator for the number of times we draw a~batch.                                         \\
 \hline 
 {\small\texttt{batch\_size\_global}}              & The size of a~single batch to be drawn.                                                                                                                                          \\
 \hline 
 {\small\texttt{time\_steps\_global}}              & The number of discrete time steps equally distributed over the time interval $\I$.                                                                                                \\
 \hline 
 {\small\texttt{$\mu$\_left\_global}}              & The left boundary of the interval $\P$ from which the variable system parameter $\mu$ is to be chosen.                                                                           \\
 \hline 
 {\small\texttt{$\mu$\_right\_global}}             & The right boundary of the interval $\P$ from which the variable system parameter $\mu$ is to be chosen.                                                                          \\
 \hline 
 {\small\texttt{n\_params\_global}}                & The number of variable system parameters $\mu$ to build the training data. The values are equally distributed  over the interval ${\P :=[\mu_{\text{left}}, \mu_{\text{right}}]}$.  \\
 \hline 
 {\small\texttt{$\mu$\_testing\_range\_global}}    & The range of $\mu$ values to test the quality of the learned autoencoder.                                                                                                         \\
\end{tabular}
}
\caption{Summary of all $1$D linear wave equation system parameters needed to be specified in the file {\small\texttt{fn\_global\_consts.jl}}.}
\label{sec:application_env:table_name:globalconstsrepro1Dspecific}
\end{table}

\subsubsection{Numerical results}
We summarize all test runs and the obtained results in Table~\ref{sec:application_env:table_name:waveequationtestruns}. 
\begin{table}[ht!]
\resizebox{\textwidth}{!}{%
\centering
\begin{tabular}{c|c|c|c|c|c|c|c|c|c|c|c} 
 %\hline
 \textbf{Test run}      & \textbf{Variant}    & \multicolumn{9}{c}{\textbf{1D linear wave equation parameter values}}  & \textbf{Results}       \\
                        &                     & {\small\texttt{N}} & {\small\texttt{n\_range}} & {\small\texttt{epochs}}& {\small\texttt{batch}} & {\small\texttt{time\_steps}} & {\small\texttt{$\mu_{\text{left}}$}} & {\small\texttt{$\mu_{\text{right}}$}} & {\small\texttt{params}} & {\small\texttt{$\mu$\_testing\_range}} & \\
 \hline 
 Run 1                  & V$1$                & $128$ & $2,3,...,15$ & $100$ & $32$ & $200$ & $5/12$ & $2/3$ & $20$ & $0.47, 0.51, 0.55, 0.625$ & {\small F\ref{fig:1Dwave_red_errors_N=128_100epochs_PSD+V1-V4}, F\ref{fig:1Dwave_proj_N=128_100epochs_PSD+V1-V4}}\\
 \hline 
 Run 2                  & V$2$                & $128$ & $2,3,...,15$ & $100$ & $32$ & $200$ & $5/12$ & $2/3$ & $20$ & $0.47, 0.51, 0.55, 0.625$ & F\ref{fig:1Dwave_red_errors_N=128_100epochs_PSD+V1-V4}, F\ref{fig:1Dwave_proj_N=128_100epochs_PSD+V1-V4}\\
 \hline 
 Run 3                  & V$3$                & $128$ & $2,3,...,15$ & $100$ & $32$ & $200$ & $5/12$ & $2/3$ & $20$ & $0.47, 0.51, 0.55, 0.625$ & F\ref{fig:1Dwave_red_errors_N=128_100epochs_PSD+V1-V4}, F\ref{fig:1Dwave_proj_N=128_100epochs_PSD+V1-V4}, F\ref{fig:1Dwave_epoch_losses_N=128_100epochs_V3_V5_V6}\\
 \hline 
 Run 4                  & V$4$                & $128$ & $2,3,...,15$ & $100$ & $32$ & $200$ & $5/12$ & $2/3$ & $20$ & $0.47, 0.51, 0.55, 0.625$ & F\ref{fig:1Dwave_red_errors_N=128_100epochs_PSD+V1-V4}, F\ref{fig:1Dwave_proj_N=128_100epochs_PSD+V1-V4}\\
 \hline 
 Run 5                  & V$5$                & $128$ & $2,3,...,15$ & $100$ & $32$ & $200$ & $5/12$ & $2/3$ & $20$ & $0.47, 0.51, 0.55, 0.625$ & F\ref{fig:1Dwave_epoch_losses_N=128_100epochs_V3_V5_V6} \\
 \hline 
 Run 6                  & V$6$                & $128$ & $2,3,...,15$ & $100$ & $32$ & $200$ & $5/12$ & $2/3$ & $20$ & $0.47, 0.51, 0.55, 0.625$ & F\ref{fig:1Dwave_epoch_losses_N=128_100epochs_V3_V5_V6}\\
 \hline 
 Run 7                  & V$3$                & $128$ & $2,3,...,15$ & $50$ & $32$ & $200$ & $5/12$ & $2/3$ & $20$ & $0.47, 0.51, 0.55, 0.625$ & F\ref{fig:1Dwave_epoch_losses_N=128_50epochs_V3_V6-V10}, F\ref{fig:1Dwave_red_errors_N=128_50epochs_PSD+V3_V6_V10}\\
 \hline 
 Run 8                  & V$6$                & $128$ & $2,3,...,15$ & $50$ & $32$ & $200$ & $5/12$ & $2/3$ & $20$ & $0.47, 0.51, 0.55, 0.625$ & F\ref{fig:1Dwave_epoch_losses_N=128_50epochs_V3_V6-V10}, F\ref{fig:1Dwave_red_errors_N=128_50epochs_PSD+V3_V6_V10}\\
 \hline 
 Run 9                  & V$7$                & $128$ & $2,3,...,15$ & $50$ & $32$ & $200$ & $5/12$ & $2/3$ & $20$ & $0.47, 0.51, 0.55, 0.625$ & F\ref{fig:1Dwave_epoch_losses_N=128_50epochs_V3_V6-V10}, F\ref{fig:1Dwave_red_errors_N=128_50epochs_PSD+V3_V6_V10}\\
 \hline 
 Run 10                  & V$8$               & $128$ & $2,3,...,15$ & $50$ & $32$ & $200$ & $5/12$ & $2/3$ & $20$ & $0.47, 0.51, 0.55, 0.625$ & F\ref{fig:1Dwave_epoch_losses_N=128_50epochs_V3_V6-V10}, F\ref{fig:1Dwave_red_errors_N=128_50epochs_PSD+V3_V6_V10}\\
 \hline 
 Run 11                  & V$9$               & $128$ & $2,3,...,15$ & $50$ & $32$ & $200$ & $5/12$ & $2/3$ & $20$ & $0.47, 0.51, 0.55, 0.625$ & F\ref{fig:1Dwave_epoch_losses_N=128_50epochs_V3_V6-V10}, F\ref{fig:1Dwave_red_errors_N=128_50epochs_PSD+V3_V6_V10}\\
 \hline 
 Run 12                  & V$10$              & $128$ & $2,3,...,15$ & $50$ & $32$ & $200$ & $5/12$ & $2/3$ & $20$ & $0.47, 0.51, 0.55, 0.625$ & F\ref{fig:1Dwave_epoch_losses_N=128_50epochs_V3_V6-V10}, F\ref{fig:1Dwave_red_errors_N=128_50epochs_PSD+V3_V6_V10}\\
 \hline
 Run 13                  & V$9$               & $256$ & $10,...,20$ & $50$ & $32$ & $200$ & $5/12$ & $2/3$ & $20$ & $0.47, 0.51, 0.55, 0.625$ & F\ref{fig:1Dwave_red_errors_N=256_50epochs_V9vsPSD}, F\ref{reconstruction comparison}, T\ref{table:integration_time}\\
\end{tabular}
}
\caption{$1$D linear wave equation test runs.}
\label{sec:application_env:table_name:waveequationtestruns}
\end{table}
First, we aim to reproduce the results published in \cite{main:brantner2023}.
The system parameters are
\begin{itemize}
\item $N=128$,
\item $n=2,3,...,15$ and
\item $\mu_{\tt testing}=0.47, 0.51, 0.55, 0.625$.
\end{itemize}
We use 20 different $\mu$ parameters equally spaced over the interval $\P = [\tfrac{5}{12}, \tfrac{2}{3}]$ for the training set and
chose 200 time steps for the discretization of the time interval.
The learning is done for 100 epochs using a~constant batch size of $32$.
The first tested variant is V$1$ which is the exact training configuration used in \cite{main:brantner2023}.
Next, we learned variants V$2$-V$4$. These variants all use the homogeneous Adam optimizer implemented in GML.
V$2$ implements a~proper epoch-wise learning routine, V$3$ uses an~epoch-wise routine with normalized data and V$4$ performs epoch-wise and normalized learning
using an~alternative loss function.
All these variants are learned for $100$ epochs.
We computed both the reduction errors (see Figure~\ref{fig:1Dwave_red_errors_N=128_100epochs_PSD+V1-V4}) and 
the projection errors (see Figure~\ref{fig:1Dwave_proj_N=128_100epochs_PSD+V1-V4}) for these runs and compared them to the PSD method errors.
The projection errors are quite similar for all runs except for V$4$, where we get a~slightly higher error for small dimensions $\leq 10$ for all test values $\mu$.
Furthermore, we can observe a~monotonically decreasing error for all variants V$1$-V$4$ in all $\mu$ values.
The reduction errors are more interesting. Here, we see the same pattern for all variants. For small reduced dimensions the PSD error values are constantly smaller
than the learned autoencoder errors, whereas for reduced dimensions $\geq 10$ the autoencoder errors are almost constantly smaller than the PSD errors. The only exception is
$\mu=0.625$ where all variants V$1$-V$4$ perform rather poorly.
Comparing the different variants, we can see that the normalized data setup (with the ROM using a~reference state) works best (i.e. variants V$3$ and V$4$).
The chosen loss function and the epoch-wise learning setup do not seem to have much impact on the quality of the results.
Next, we tested the StiefelAdam optimizer with the same system parameters as before and compared it to the best homogeneous Adam variant V$3$.
Since the use of an~alternative loss did not yield any improvements, we decided to use the StiefelAdam optimizer with the GML loss.
We started with variant V$5$ using the canonical metric Riemannian gradient and the submanifold vector transport and no decay.
Again, we learned for $100$ epochs.
Here, we observe something very interesting. Looking at the average loss function value of each epoch (see Figure~\ref{fig:1Dwave_epoch_losses_N=128_100epochs_V3_V5_V6}), 
we see at first a~massive loss decrease that drops from an~initial value around $1$ to a~value around $0.12$ within only four epochs. 
This initial decrease is much faster than what we observe for V$3$, where we have an~average epoch loss around $0.62$ after four epochs.
Astonishingly, after the third epoch, the epoch loss of V$5$ suddenly increases again and explodes within a~few epochs. This behavior implies that the StiefelAdam optimizer makes good learning progress in the beginning
but then gets worse before it goes completely off course. This suggests the assumption that, as we approach a~(local) minimum of the error function,
we need to reduce the step size of an~update vector in order to improve. Otherwise, we will just jump over the minimum and drift away.
With this in mind, the logical next step is to add a~decay to the learning rate $\eta$ in each step. As described in Section~\ref{sec:application_subsubsec:networkconfiguration}, we added the decay $\eta \leftarrow \eta \cdot
0.9995$ in each iteration. The results of this modification were applied in V$6$. As seen in Figure~\ref{fig:1Dwave_epoch_losses_N=128_100epochs_V3_V5_V6}, this prevents the epoch loss from exploding,
and we get a~monotonous loss decrease.
Nevertheless, we have a~strong bend after four epochs and the loss decrease slows down considerably, so that we are approaching the loss of the homogeneous space optimizer.
Further research could be helpful here to investigate the reason for this behavior and thus possibly further improve the performance of the StiefelAdam optimizer.
In a~next step, we tested all combinations of the Riemannian metrics and vector transports using the StiefelAdam optimizer with decay (variants V$6$-V$9$) for the same system parameters as above
with the only difference that we tested only the reduced dimensions $n=6,7,...,15$.
We also tested the canonical metric and submanifold vector transport setup with the alternative loss in V$10$.
Since we observed a~very fast loss decrease at the beginning for V$6$, we now learned for only $50$ epochs instead of $100$ epochs.
We displayed the reduction error results of these runs in Figure~\ref{fig:1Dwave_red_errors_N=128_50epochs_PSD+V3_V6_V10} and 
the average epoch losses in Figure~\ref{fig:1Dwave_epoch_losses_N=128_50epochs_V3_V6-V10}.
When comparing to the PSD reduction errors, we can observe that the autoencoders outperform the PSD method for all $\mu$-values except for $\mu=0.625$.
Here, we see that the StiefelAdam variants can keep up with the PSD results for reduced dimensions $\geq 11$ whereas the error of variant V$3$ 
explodes for $n=15$.
Comparing the different autoencoder variants, we see that the different StiefelAdam configurations (V$6$-V$9$) work better than the HomogeneousAdam variant V$3$ on average.
This is in line with the faster loss decrease for these variants (see Figure~\ref{fig:1Dwave_epoch_losses_N=128_50epochs_V3_V6-V10}).
The four different configurations of the Riemannian metrics and vector transports are all pretty similar. We can observe a~slightly better performance for V$8$ (Euclidean metric + submanifold vector transport)
and V$9$ (Euclidean metric + differential vector transport).

At the end, we performed one more training and testing run for V$9$ in the doubled full dimension $N=256$ and the reduced dimensions $n=10,...,20$. 
We used the same $\mu$ values as before for the training data and the testing. 
In Figure~\ref{fig:1Dwave_red_errors_N=256_50epochs_V9vsPSD}, we compared the obtained reduction errors to the PSD reduction errors.
We make two interesting observations here.
First, for the reduced dimensions $n=10,...,16$, the autoencoder errors can keep up with the PSD errors. This changes for dimensions $n=17,...,20$. Here, we see
a massive error decrease for the PSD method, whereas the autoencoder errors increase. We display a~3D-reconstruction comparison for $n=17$ in Figure~\ref{reconstruction comparison}.
Second, we see that the autoencoder errors get bigger the larger $\mu$ gets, whereas this is not the case for the PSD errors. Especially for large reduced dimensions between $18$ and $20$, we see that
the reduction errors of the autoencoder explode in some cases. 
This is different from what we observed for $N=128$, where V$9$ could keep up and in most cases outperformed the PSD method for all $\mu$ values tested 
and reduced dimensions $n\geq 10$. 
Since the use of a~reduction method is usually applied to very high FOM dimensions $N$, it would be interesting to conduct more 
experiments for high FOM dimensions to see why the PSD method delivers so much better results than V$9$ here.
Another interesting observation is the large integration time for the ROM when using the nonlinear decoder function 
of the autoencoder compared to the PSD method (see Table~\ref{table:integration_time}). 
There may be several reasons for this. On the one hand, of course, this may be due to the fact that the learned autoencoder system already occupies 
many system resources (memory, etc.) and therefore fewer resources remain for integration, which can lead to a~loss of speed.
On the other hand, this may be due to the fact that we have to calculate the derivative of $d$ in each integration step of the ROM. 
For a~linear function (PSD method) this is an~easy task, but for a~nonlinear decoder $d$ (like the autoencoder functions) this can be quite expensive.
%\begin{itemize}
%  \item[1.] For the reduced dimensions $n=10,...,16$ the autoencoder errors can keep up with the PSD errors. This changes for dimensions $n=17,...,20$. Here we see
%  a massive error decrease for the PSD method whereas the autoencoder errors increase. We displayed a 3D-reconstruction comparison for $n=17$ in Figure~\ref{reconstruction comparison}.
%  \item[2.] The autoencoder errors get bigger the larger $\mu$ gets whereas this is not the case for the PSD errors. Especially for large reduced dimensions between $18$ and $20$ we see that
%  the reduction errors of the autoencoder explode in some cases. 
%  \item[3.] The integration of the ROM takes multiple times longer for the autoencoder than for the PSD method in all dimensions and for all $\mu$ values (see Table~\ref{table:integration_time} for $\mu=0.625$).
%This might be due to the fact that we need to calculate the derivative of $d$ in each integration step of the ROM. 
%For a linear function (PSD method) this is a easy task, but for a nonlinear decoder $d$ (like the autoencoder functions) this is can be a quite expensive task.
%\end{itemize}
%These observations bring up the question whether and in which cases the use of a autoencoder network to obtain a nonlinear encoder-decoder pair is preferable over the linear PSD encoder-decoder pair.
%Especially since the numerical costs for the autoencoder approach are mulitple times bigger both in the online as well as in the offline stage.
\begin{table}[ht!]
\resizebox{\textwidth}{!}{%
\centering
\begin{tabular}{llllllllllllllllllll}
  \hline
                    & \textbf{$n=10$} & \textbf{$n=11$} & \textbf{$n=12$} & \textbf{$n=13$} & \textbf{$n=14$} & \textbf{$n=15$}  & \textbf{$n=16$} & \textbf{$n=17$} & \textbf{$n=18$} & \textbf{$n=19$} & \textbf{$n=20$}  \\
  \hline  
  V$9$              &  99             & 121             & 167             & 221             & 242             & 247              & 290             & 384             & 861             & 777             & 1208 \\
  PSD               &  8              & 10              & 14              & 19              & 25              & 28               & 38              & 41              & 39              & 47              & 59   \\
  \hline
\end{tabular}
}
\caption{Integration time needed to solve the ROM in different reduced dimensions using the decoder from variant V$9$ and the PSD method for $\mu=0.625$ and $N=256$ in seconds.}
\label{table:integration_time}
\end{table}

\subsection{1D sine-Gordon equation} \label{sec:numericalexperiments_subsec:sinegordon}

The \textit{sine-Gordon equation} is a~nonlinear partial differential equation (PDE) and can be viewed as a~nonlinear extension of the 
\textit{linear wave equation} with unity wave speed $\mu = 1$.

We consider a~1D sine-Gordon equation  
\begin{align} \label{sec:numericalexperiments_env:align_name:pde1Dsinegordon}
  \partial_{tt}^2u(t,\xi) &= \partial_{\xi \xi}^2u(t,\xi) - \sin \left(u(t,\xi)\right) & \quad\quad &\text{in } \I \times \Omega,  \\
  u(0,\xi) &= u_0(\xi) & \quad\quad &\text{on } \Omega, \nonumber \\
  u_t(0,\xi) &= u_1(\xi) & \quad\quad &\text{on } \Omega, \nonumber \\
  u(t,a) &= \varphi(t) & \quad\quad &\text{on } \I, \nonumber \\
  u(t,b) &= \psi(t) & \quad\quad &\text{on } \I \nonumber  
\end{align}
with initial conditions determined by $u_0(\xi), u_1(\xi)$ and boundary conditions determined by $\varphi(t),\psi(t)$.
Unlike the linear wave equation \eqref{sec:numericalexperiments_env:align_name:pde1Dwaveequation}, the wave speed 
factor $\mu^2$ disappears, and instead we subtract the nonlinearity $\sin(u(t,\xi))$.
We define 
\begin{itemize}
  \item $q(t,\xi) := u(t,\xi)$ and 
  \item $p(t,\xi) := \partial_t q(t,\xi) = \partial_t u(t,\xi)$
\end{itemize}
(and thus $\partial_tp(t,\xi) = \partial_{\xi \xi}^2u(t,\xi) - \sin \left(u(t,\xi)\right) = \partial_{\xi \xi}^2q(t,\xi) - \sin \left(q(t,\xi)\right)$), 
and obtain the \textit{continuous Hamiltonian} 
\begin{align} \label{sec:numericalexperiments_env:align_name:continuoushamiltonian}
  \H_{cont}(q,p) &= \int_{\Omega} \frac{1}{2} \left(\partial_{\xi} q(t,\xi)\right)^2 + \frac{1}{2} p(t,\xi)^2 + (1 - \cos(q(t,\xi))) d\xi. 
\end{align}

\subsubsection{Spatial discretization}

Given an~$N\in \N$, we discretize the spatial domain $\Omega=[a,b]$ into $(N+2)$ points $\xi_i := ih_{N} + a$ for $i = 0,...,N+1$ and $h_{N} := \tfrac{b-a}{N+1}$. 
For the discretization of $q(t,\xi)$ and $p(t,\xi)$ we define:
\begin{itemize}
  \item $q_i(t) := q(t,\xi_i) \quad i=0,...,N+1$,
  \item $p_i(t) := p(t,\xi_i) \quad i=0,...,N+1$.
\end{itemize} 
We want to obtain the discretized problem as an~ODE of the form
\begin{align} \label{sec:numericalexperiments_env:align_name:odegeneraldiscretizedproblem}
  \begin{cases}
    \dot{q} = p \\
    \dot{p} = \L q - f(q) \\
    q(0) = q_0, p(0) = p_0,
  \end{cases}
\end{align}
where we set 
$q := [q_1,...,q_{N}]^T$ and $p := [p_1,...,p_{N}]^T$. 
Here, $\L$ denotes a~linear operator and $f$ is the nonlinearity.
Both $\L$ and $f$ rely on the method used to discretize the continuous Hamiltonian \eqref{sec:numericalexperiments_env:align_name:continuoushamiltonian} 
and the initial/boundary conditions used. We use the \textit{trapezoidal rule}: 
\begin{align} \label{sec:numericalexperiments_env:align_name:calctrapezoidalrule}
  \H_{cont}(q,p) 
  &= \int_{\Omega} \frac{1}{2} \left(\partial_{\xi} q(t,\xi)\right)^2 + \frac{1}{2} p(t,\xi)^2 + (1 - \cos(q(t,\xi))) d\xi \\
  &\approx \frac{h_N}{2} \sum_{i=0}^N \bigg(\frac{1}{2} \left(\partial_{\xi} q_i(t)\right)^2 + \frac{1}{2} p_i(t)^2 + (1 - \cos(q_i(t)))  \nonumber \\
  &\phantom{\frac{h_N}{4} \sum_{i=0}^N \bigg(} +\frac{1}{2} \left(\partial_{\xi} q_{i+1}(t)\right)^2 + \frac{1}{2} p_{i+1}(t)^2 + (1 - \cos(q_{i+1}(t))) \bigg) \nonumber \\
  &=\frac{h_N}{4} \left( \left(\partial_{\xi} q_{0}(t)\right)^2 + \left(\partial_{\xi} q_{N+1}(t)\right)^2 \right) + \frac{h_N}{4} \sum_{i=1}^N 2\left(\partial_{\xi} q_{i}(t)\right)^2 \nonumber \\
  &+\frac{h_N}{4} \left( p_{0}(t)^2 + p_{N+1}(t)^2 \right) + \frac{h_N}{2} \sum_{i=1}^N p_{i}(t)^2 \nonumber \\
  &+\frac{h_N}{2} \left( (1 - \cos(q_{0}(t))) + (1 - \cos(q_{N+1}(t))) \right) + h_N \sum_{i=1}^N (1 - \cos(q_{i}(t))). \nonumber 
\end{align}
Before we continue, we first need to find an~approximation for $\left(\partial_{\xi} q_i(t)\right)^2$. We use a~combination of \textit{forward and 
backward difference methods} to obtain
\begin{itemize}
  \item $\left(\partial_{\xi} q_0(t)\right)^2 \approx \left(\frac{q_1(t)- q_0(t)}{h_N}\right)^2$,
  \item $\left(\partial_{\xi} q_{N+1}(t)\right)^2 \approx \left(\frac{q_{N+1}(t)- q_{N}(t)}{h_N}\right)^2$,
  \item $2\left(\partial_{\xi} q_i(t)\right)^2 \approx \left(\frac{q_{i}(t)- q_{i-1}(t)}{h_N}\right)^2 + \left(\frac{q_{i+1}(t)- q_i(t)}{h_N}\right)^2, \quad i=1,...,N$.
\end{itemize} 
Now, we apply this to \eqref{sec:numericalexperiments_env:align_name:calctrapezoidalrule} and get
\begin{align*}   
  \H_{cont}(q,p)
  &\approx\frac{h_N}{4} \sum_{i=0}^N 2\left( \frac{q_{i+1}(t) - q_i(t)}{h_N} \right)^2  \\
  &+\frac{h_N}{4} \left( p_{0}(t)^2 + p_{N+1}(t)^2 \right) + \frac{h_N}{2} \sum_{i=1}^N p_{i}(t)^2 \nonumber \\
  &+\frac{h_N}{2} \left( (1 - \cos(q_{0}(t))) + (1 - \cos(q_{N+1}(t))) \right) + h_N \sum_{i=1}^N (1 - \cos(q_{i}(t))) \nonumber \\
  &=\frac{h_N}{2} \left( \frac{(q_1(t) - \varphi(t))^2}{h_N^2} + \frac{(\psi(t) - q_N(t))^2}{h_N^2} + \sum_{i=1}^{N-1} \frac{(q_{i+1}(t) - q_i(t))^2}{h_N^2} \right) \nonumber \\
  &+\frac{h_N}{4} \left( \dot{\varphi}(t)^2 + \dot{\psi}(t)^2 \right) + \frac{h_N}{2} \sum_{i=1}^N p_{i}(t)^2 \nonumber \\
  &+\frac{h_N}{2} \left( (1 - \cos(\varphi(t))) + (1 - \cos(\psi(t))) \right) + h_N \sum_{i=1}^N (1 - \cos(q_{i}(t))) \nonumber \\
  &=\frac{h_N}{2} \left( -q^T \L q - \frac{2q_1(t)\varphi(t)}{h_N^2} + \frac{\varphi(t)^2}{h_N^2} + \frac{\psi(t)^2}{h_N^2} - \frac{2q_N(t)\psi(t)}{h_N^2} \right) \nonumber \\
  &+\frac{h_N}{4} \left( \dot{\varphi}(t)^2 + \dot{\psi}(t)^2 \right) + \frac{h_N}{2} p^Tp \nonumber \\
  &+\frac{h_N}{2} \left( (1 - \cos(\varphi(t))) + (1 - \cos(\psi(t))) \right) + h_N \sum_{i=1}^N (1 - \cos(q_{i}(t))) \nonumber \\
  &=-\frac{h_N}{2} q^T \L q + \frac{h_N}{2} p^Tp \nonumber \\
  &+\frac{h_N}{2} \left( -\frac{2q_1(t)\varphi(t)}{h_N^2} + \frac{\varphi(t)^2}{h_N^2} + \frac{\psi(t)^2}{h_N^2} - \frac{2q_N(t)\psi(t)}{h_N^2} \right) + \frac{h_N}{4} \left( \dot{\varphi}(t)^2 + \dot{\psi}(t)^2 \right) \nonumber \\
  &+\frac{h_N}{2} \left( (1 - \cos(\varphi(t))) + (1 - \cos(\psi(t))) \right) + h_N \sum_{i=1}^N (1 - \cos(q_{i}(t))) \nonumber \\
  &=: \H_{h_N}(q,p), \nonumber
\end{align*}
where we set  
\begin{align*}
  \L &:=
  \frac{1}{h_N^2}
\begin{bmatrix}
  -2 & 1 & & &  \\
  1 & -2 & 1 & &  \\
   & \ddots & \ddots & \ddots & \\
   &  &  &  & 1 \\
   & & & 1 & -2 \\
\end{bmatrix} \in \R^{N}.
\end{align*}
Calculating the gradient of $\H_{h_N}(q,p)$ gives us
\begin{align*} 
  \nabla\H_{h_N}(q,p) &= 
  \begin{bmatrix}
    -h_N \L & 0 \\
    0 & h_NI_N 
  \end{bmatrix}
  \begin{bmatrix}
    q \\ p
  \end{bmatrix}
  + 
  \begin{bmatrix}
    h_Nf(q) \\ 0
  \end{bmatrix},
\end{align*}  
where
\begin{align*} 
  f(q) &= 
  \begin{bmatrix}
    \begin{aligned}
     &\sin(q_1) - \frac{\varphi(t)}{h_N^2} \\
     &\sin(q_2) \\ 
     &\phantom{..} \vdots \\
     &\sin(q_N) - \frac{\psi(t)}{h_N^2}
     &\end{aligned}
  \end{bmatrix}.
\end{align*}

We now obtain a~\textit{Hamiltonian system} of the form \eqref{sec:numericalexperiments_env:align_name:odegeneraldiscretizedproblem}. We have
\begin{align} 
  \begin{bmatrix}
    \dot{q} \\ \dot{p}
  \end{bmatrix}
  &=
  \begin{bmatrix}
    p \\ \L q - f(q) 
  \end{bmatrix} \label{sec:numericalexperiments_env:align_name:almosthamiltonian} \\
  &= J_{2N} 
  \left( 
  \begin{bmatrix}
    -\L & 0 \\
    0 & I_N \\
  \end{bmatrix}
  \begin{bmatrix}
    q \\ p
  \end{bmatrix}
  +
  \begin{bmatrix}
    f(q) \\ 0
  \end{bmatrix}
  \right) \nonumber \\
  &=  J_{2N} \frac{1}{h_N}\nabla \H_{h_N}(q,p). \nonumber
\end{align}
This system almost looks like a~Hamiltonian system for the discretized sine-Gordon equation. The only remaining problem is the scaling 
factor $\tfrac{1}{h_N}$. Similar to the 1D wave equation, we omitted the state-space transformation and implemented the not transformed, 
scaled problem \eqref{sec:numericalexperiments_env:align_name:almosthamiltonian}.

Note that the dimension of the Hamiltonian system \eqref{sec:numericalexperiments_env:align_name:almosthamiltonian} is $2N$, 
unlike for the 1D linear wave equation, where we had the full dimension $2(N+2)$. 
Now we have \enquote{cut off} the edges $q_0, q_{N+1}, p_0, p_{N+1}$ of the vector $[q^T,p^T]^T$.
This is due to the different discretization methods we used and does not cause any problems 
since the boundary values are determined by the initial and boundary conditions.

\subsubsection{Initial and boundary conditions}

As described in \eqref{sec:numericalexperiments_env:align_name:pde1Dsinegordon}, we have to make choices for the initial conditions $u_0(\xi), u_1(\xi)$ (IC) 
and the boundary conditions $\varphi(t),\psi(t)$ (BC).
We decided to implement two different condition pairs: The so-called \textit{single soliton} conditions and the \textit{soliton-soliton doublets} conditions.
Unlike the 1D wave equation, we have analytical solutions for these condition pairs. 
Although this makes these examples kind of useless for a~real world model reduction scenario, this is exactly what makes it a~good choice for testing a~new network configuration,
since we have the exact FOM solutions to compute the error and do not have to rely on inexact numerical solutions.
The analytical solutions presented in this section are taken from \cite{sGAnalytical2:mittal2014} and \cite{sGanalytical1:uddin2012}.

\subsubsubsection{Single soliton} \label{subsec:theory_singlesoliton}
Let $\I:=[0,T]$ and $\Omega:=[a,b]$. 
The single soliton initial conditions are given by
  \begin{align*}
    u_0(\xi) &= u(0,\xi) := 4\arctan \left( \exp \left( \frac{\xi}{\sqrt{1- \nu^2}} \right) \right),  \\
    u_1(\xi) &= \partial_t u(0,\xi) := \frac{-4\nu \exp \left( \frac{\xi}{\sqrt{1- \nu^2}} \right)}{\sqrt{1- \nu^2} \left( 1 + \exp \left( \frac{2\xi}{\sqrt{1- \nu^2}} \right) \right)}.  
  \end{align*}
The boundary conditions are
  \begin{align*}
    \varphi(t) &= u(t,a) := 4\arctan \left( \exp \left( \frac{a - \nu t}{\sqrt{1- \nu^2}} \right) \right), \\ 
    \psi(t) &= u(t,b) := 4\arctan \left( \exp \left( \frac{b - \nu t}{\sqrt{1- \nu^2}} \right) \right). 
  \end{align*}
These initial and boundary conditions lead to the exact solution
\begin{align}
  u(t,\xi) = 4\arctan \left( \exp \left( \frac{\xi - \nu t}{\sqrt{1- \nu^2}} \right) \right). \label{singlesolitonsolution_exactsol}
\end{align}
In contrast to the 1D linear wave equation, we have now denoted the variable system parameter as $\nu$ instead of $\mu$.
Defining $x_\nu(t) := [q(t)^T,p(t)^T]^T \in \R^{2N}$, we have to solve the $\nu$-dependent FOM
\begin{align}
  \dot{x}_{\nu} &= J_{2N} \frac{1}{h_{N}} \nabla_{x}\H_{h_N} (x_{\nu}; \nu) \label{ode:examplesinglesoliton} \\ 
  &=  
  \begin{bmatrix}
    0 && I_N \\
    \L && 0 \\
  \end{bmatrix}
  x_{\nu}
  - 
  \begin{bmatrix}
    \begin{aligned}
     & \hphantom{\sin(q_1) - \tfrac{1}{h_N^2} 444444} 0 \\
     & \hphantom{\sin(q_1) - \tfrac{1}{h_N^2} 444444} \vdots \\
     & \hphantom{\sin(q_1) - \tfrac{1}{h_N^2} 444444} 0 \\
     &\sin(q_1) - \tfrac{1}{h_N^2} 4\arctan \left( \exp \left( \tfrac{a - \nu t}{\sqrt{1- \nu^2}} \right) \right)\\
     &\sin(q_2) \\ 
     &\phantom{..} \vdots \\
     &\sin(q_N) - \tfrac{1}{h_N^2} 4\arctan \left( \exp \left( \tfrac{b - \nu t}{\sqrt{1- \nu^2}} \right) \right)
     &\end{aligned}
  \end{bmatrix}, \nonumber \\
  x_{\nu}(0) &=   
  \begin{bmatrix}
    \left(4\arctan \left( \exp \left( \tfrac{\xi_i}{\sqrt{1- \nu^2}} \right) \right) \right)_{i=1}^N \\[1.0em] \left(\frac{-4\nu \exp \left( \tfrac{\xi_i}{\sqrt{1- \nu^2}} \right)}{\sqrt{1- \nu^2} \left( 1 + \exp \left( \tfrac{2\xi_i}{\sqrt{1- \nu^2}} \right) \right)} \right)_{i=1}^N \\
  \end{bmatrix} \in \R^{2N}. \nonumber 
\end{align} 

\clearpage
\subsubsubsection{Soliton-soliton doublets} \label{subsec:theory_solitonsolitondoublets}
Denote the \textit{hyperbolic secant} function by 
$$\sech(x) := \frac{2}{e^x + e^{-x}}.$$
The soliton-soliton doublets initial conditions are given by
  \begin{align*}
    u_0(\xi) &= u(0,\xi) := 4\arctan \left( \nu \sinh \left( \frac{\xi }{\sqrt{1 - \nu^2}} \right) \right),  \\
    u_1(\xi) &= \partial_t u(0,\xi) := 0. 
  \end{align*}
The boundary conditions are
  \begin{align*}
    \varphi(t) &= u(t,a) := 4\arctan \left( \nu \sech \left( \frac{\nu t}{\sqrt{1 - \nu^2}} \right) \sinh \left( \frac{a}{\sqrt{1 - \nu^2}} \right) \right), \\
    \psi(t) &= u(t,b) := 4\arctan \left( \nu \sech \left( \frac{\nu t}{\sqrt{1 - \nu^2}} \right) \sinh \left( \frac{b}{\sqrt{1 - \nu^2}} \right) \right). 
  \end{align*}
These initial and boundary conditions lead to the exact solution
\begin{align}
  u(t,\xi) = 4\arctan \left( \nu \sech \left( \frac{\nu t}{\sqrt{1 - \nu^2}} \right) \sinh \left( \frac{\xi}{\sqrt{1 - \nu^2}} \right) \right). \label{solitonsolitiondoublets_exactsol}
\end{align}
Again, we used $\nu$ as the variable to denote the variable system parameter. \\
Defining ${x_\nu(t) := [q(t)^T,p(t)^T]^T \in \R^{2N}}$, we have to solve the $\nu$-dependent FOM
\begin{align}
  \dot{x}_{\nu} &= J_{2N} \frac{1}{h_{N}} \nabla_{x}\H_{h_N} (x_{\nu}; \nu)  \label{ode:examplesolitionsolitondoublets} \\ 
  &=  
  \begin{bmatrix}
    0 && I_N \\
    \L && 0 \\
  \end{bmatrix}
  x_{\nu}
  - 
  \begin{bmatrix}
    \begin{aligned}
     & \hphantom{\sin(q_1) - \tfrac{1}{h_N^2} 44444444444} 0 \\
     & \hphantom{\sin(q_1) - \tfrac{1}{h_N^2} 44444444444} \vdots \\
     & \hphantom{\sin(q_1) - \tfrac{1}{h_N^2} 44444444444} 0 \\
     &\sin(q_1) - \tfrac{1}{h_N^2} 4\arctan \left( \nu \sech \left( \tfrac{\nu t}{\sqrt{1 - \nu^2}} \right) \sinh \left( \tfrac{a}{\sqrt{1 - \nu^2}} \right) \right)\\
     &\sin(q_2) \\ 
     &\phantom{..} \vdots \\
     &\sin(q_N) - \tfrac{1}{h_N^2} 4\arctan \left( \nu \sech \left( \tfrac{\nu t}{\sqrt{1 - \nu^2}} \right) \sinh \left( \tfrac{b}{\sqrt{1 - \nu^2}} \right) \right)
     &\end{aligned}
  \end{bmatrix}, \nonumber \\
  x_{\nu}(0) &=   
  \begin{bmatrix}
    \left(4\arctan \left( \nu \sinh \left( \tfrac{\xi_i}{\sqrt{1 - \nu^2}} \right) \right) \right)_{i=1}^N \\ \left( 0 \right)_{i=1}^N \\
  \end{bmatrix} \in \R^{2N}. \nonumber 
\end{align}

\subsubsection{Implementation}

We implemented all single soliton and soliton-soliton doublets specific parts in the folder \\
\stexttt{sineGordon/examples}.
The Hamiltonian is implemented in the folder \stexttt{sineGordon/general}.
The training data matrix with the analytical solution snapshots is generated in \\
{\small\texttt{sineGordon/main/fn\_generate\_analytic\_data.jl}}. 
The data matrix is constructed in the same way as for the $1$D linear wave equation (see \eqref{fig:schematic_data_matrix}), except that we now have the 
variable system parameter $\nu$ instead of $\mu$.
The normalized training data is produced in the file {\small\texttt{sineGordon/main/fn\_generate\_data\_normalized.jl}}.
The errors of the comparison method are computed in \stexttt{sineGordon/main/fn\_generate\_psd.jl}. 
The actual training is performed in \stexttt{fn\_training\_v020\_sineGordon.jl} with the GML built-in homogeneous space optimizer and in 
\stexttt{fn\_training\_v020\_alternative\_optimizer\_sineGordon.jl} with our alternative optimizer approach.
The necessary system parameters and the choices of the training variants (see Table~\ref{sec:application_env:table_name:completelearningsetups})
need to be specified in 
\begin{itemize}
  \item \scripttexttt{sineGordon/general/fn\_global\_consts.jl} and
  \item \scripttexttt{alternative\_optimizer/generalfn\_global\_consts\_alternative\_optimizer\_specific.jl}.
\end{itemize}
The variant specific options are the same as for the $1$D linear wave equation (displayed in Table~\ref{sec:application_env:table_name:globalconstsrepro}).
The sine-Gordon equation specific parameters are shown in Table~\ref{sec:application_env:table_name:globalconstssineGordon1Dspecific}.
Unlike what we did for the wave equation, we now have to specify the time interval $\I$ and the spatial domain $\Omega$ by setting the 
specific parameters.
Again, we use the datatype \stexttt{Float64} for the training snapshots as well as for the actual learning and the testing.
A~description of the chronological steps to be taken to perform a~complete training routine can be found in {\small \stexttt{fn\_global\_consts.jl}}.
\begin{table}[ht!]
\resizebox{\textwidth}{!}{%
\centering
\begin{tabular}{c|p{10.5cm}} 
 %\multicolumn{3}{c}{Testing: Values} \\
 %\hline
 \textbf{Parameter}             & \textbf{Explanation}          \\
 \hline 
 {\small\texttt{N\_global}}                                             & Number of spatial discretization points $N$; the full dimension of the FOM is $2N$.                                                                                          \\
 \hline 
 {\small\texttt{range\_n\_global}}                                      & Range of reduced dimensions that are supposed to be tested.                                                                                                                       \\
 \hline 
 {\small\texttt{n\_epochs\_global}}                                     & Number of epochs to be learned.                                                                                                                                                  \\
 \hline 
 {\small\texttt{batch\_size\_global}}                                   & The size of a~single batch to be drawn.                                                                                                                                          \\
 \hline 
 {\small\texttt{start\_time\_global}}                                   & The left boundary of the time interval $\I=[\text{t}_0,\text{t}_1]$.                                                                                                                                          \\
 \hline 
 {\small\texttt{end\_time\_global}}                                     & The right boundary of the time interval $\I=[\text{t}_0,\text{t}_1]$.                                                                                                                                          \\
 \hline 
 {\small\texttt{a\_global}}                                             & The left boundary of the spatial domain $\Omega=[a,b]$.                                                                                                                                          \\
 \hline 
 {\small\texttt{b\_global}}                                             & The right boundary of the spatial domain $\Omega=[a,b]$.                                                                                                                                          \\
 \hline 
 {\small\texttt{time\_steps\_global}}                                   & The number of discrete time steps equally distributed over the time interval $\I$ (short form \stexttt{t\_steps}).                                                                                              \\
 \hline 
 {\small\texttt{$\nu$\_global\_range\_single\_soliton}}                 & The variable system parameters $\nu$ to build the training data for the single soliton conditions. Unlike for the wave equation, we now specify these values directly and not by the number of values (short form \stexttt{$\nu$\_range}).    \\
 \hline 
 {\small\texttt{$\nu$\_global\_range\_soliton\_soliton\_doublets}}      & The variable system parameters $\nu$ to build the training data for the soliton-soliton doublets conditions (short form \stexttt{$\nu$\_range}).                                                                    \\
 \hline 
 {\small\texttt{$\nu$\_testing\_range\_global}}                         & The range of $\nu$ values to test the quality of the learned autoencoder.                                                                                                         \\
 \hline 
 {\small\texttt{example\_in\_use\_global}}                              & Specifies the example to be specified as a~string. For single soliton this is the string \stexttt{single\_soliton}, for soliton-soliton doublets it is \stexttt{soliton\_soliton\_doublets}. \\
\end{tabular}
}
\caption{Summary of all $1$D sine-Gordon equation system parameters needed to be specified in the file {\small\texttt{fn\_global\_consts.jl}}.}
\label{sec:application_env:table_name:globalconstssineGordon1Dspecific}
\end{table}

\subsubsection{Numerical results: single soliton}
We summarize all test runs and the obtained results in Table~\ref{sec:application_env:table_name:singlesolitontestruns}. 
\begin{table}[ht!]
\resizebox{\textwidth}{!}{%
\centering
\begin{tabular}{c|c|c|c|c|c|c|c|c|c|c|c|c|c} 
 %\hline
 \textbf{Test run}      & \textbf{Variant}    & \multicolumn{11}{c|}{\textbf{Single soliton parameter values}}  & \textbf{Results}       \\
                        &                     & {\small\texttt{N}} & {\small\texttt{range\_n}} & {\small\texttt{epochs}}& {\small\texttt{batch}} & {\stexttt{t0}} & {\stexttt{t1}} & {\stexttt{a}} & {\stexttt{b}} & {\small\texttt{t\_steps}} & {\small\texttt{$\nu$\_range}} & {\small\texttt{$\nu$\_testing\_range}} & \\
 \hline 
 Run 1                  & V$2$                & $128$ & $3,4,...,15$ & $50$ & $32$ & $0.0$ & $4.0$ &-10.0 & 10.0 & $200$  & \stexttt{range}$(-0.98,-0.72,20)$ & $[-0.97,-0.92,-0.85,-0.73]$ & F\ref{fig:1DsG_red_errors_N=128_50epochs_singlesoliton}, F\ref{fig:1DsG_proj_errors_N=128_50epochs_singlesoliton}, F\ref{fig:1DsineGordon_epoch_losses_N=128_50epochs_singlesoliton}, F\ref{reconstruction_comparison_single_soliton} \\
 \hline 
 Run 2                  & V$3$                & $128$ & $3,4,...,15$ & $50$ & $32$ & $0.0$ & $4.0$ &-10.0 & 10.0 & $200$  & \stexttt{range}$(-0.98,-0.72,20)$ & $[-0.97,-0.92,-0.85,-0.73]$ &  F\ref{fig:1DsG_red_errors_N=128_50epochs_singlesoliton}, F\ref{fig:1DsG_proj_errors_N=128_50epochs_singlesoliton}, F\ref{fig:1DsineGordon_epoch_losses_N=128_50epochs_singlesoliton}, F\ref{reconstruction_comparison_single_soliton} \\
 \hline                                          
 Run 3                  & V$6$                & $128$ & $3,4,...,15$ & $50$ & $32$ & $0.0$ & $4.0$ &-10.0 & 10.0 & $200$  & \stexttt{range}$(-0.98,-0.72,20)$ & $[-0.97,-0.92,-0.85,-0.73]$ & F\ref{fig:1DsG_red_errors_N=128_50epochs_singlesoliton}, F\ref{fig:1DsG_proj_errors_N=128_50epochs_singlesoliton}, F\ref{fig:1DsineGordon_epoch_losses_N=128_50epochs_singlesoliton} \\
 \hline                                          
 Run 4                  & V$7$                & $128$ & $3,4,...,15$ & $50$ & $32$ & $0.0$ & $4.0$ &-10.0 & 10.0 & $200$  & \stexttt{range}$(-0.98,-0.72,20)$ & $[-0.97,-0.92,-0.85,-0.73]$ & F\ref{fig:1DsG_red_errors_N=128_50epochs_singlesoliton}, F\ref{fig:1DsG_proj_errors_N=128_50epochs_singlesoliton}, F\ref{fig:1DsineGordon_epoch_losses_N=128_50epochs_singlesoliton} \\
 \hline                                          
 Run 5                  & V$8$                & $128$ & $3,4,...,15$ & $50$ & $32$ & $0.0$ & $4.0$ &-10.0 & 10.0 & $200$  & \stexttt{range}$(-0.98,-0.72,20)$ & $[-0.97,-0.92,-0.85,-0.73]$ & F\ref{fig:1DsG_red_errors_N=128_50epochs_singlesoliton}, F\ref{fig:1DsG_proj_errors_N=128_50epochs_singlesoliton}, F\ref{fig:1DsineGordon_epoch_losses_N=128_50epochs_singlesoliton}, F\ref{reconstruction_comparison_single_soliton} \\
 \hline                                          
 Run 6                  & V$9$                & $128$ & $3,4,...,15$ & $50$ & $32$ & $0.0$ & $4.0$ &-10.0 & 10.0 & $200$  & \stexttt{range}$(-0.98,-0.72,20)$ & $[-0.97,-0.92,-0.85,-0.73]$ & F\ref{fig:1DsG_red_errors_N=128_50epochs_singlesoliton}, F\ref{fig:1DsG_proj_errors_N=128_50epochs_singlesoliton}, F\ref{fig:1DsineGordon_epoch_losses_N=128_50epochs_singlesoliton} \\
\end{tabular}
}
\caption{Single soliton test runs.}
\label{sec:application_env:table_name:singlesolitontestruns}
\end{table}
We learned the variants V$2$, V$3$ and V$6$-V$9$ for $50$ epochs and compared the results to the PSD method. 
The system parameters we used, are
\begin{itemize}
\item $N=128$,
\item $n=3,...,15$,
\item $\I=[0.0, 4.0]$,
\item $\Omega=[-10.0,10.0]$,
\item $\nu_{\tt testing}=-0.97, -0.92, -0.85, -0.73$.
\end{itemize}
We use $\P=[-0.98, -0.72]$ as our $\nu$-range. We took $20$ equally spaced $\nu$ values from this range for the training set.
Note that the possible $\nu$-range for the single soliton conditions is the interval $(-1,1)$.  
However, since the FOM solutions are symmetric about zero except for the sign and since the (discretized) FOM solution for $\nu=0$ is constant (in $t$), 
we decided to use the negative partial interval $[-0.98, -0.72] \subseteq (-1,1)$ near $-1$ for our tests. 
This is particularly interesting because the larger the norm of a~value $\nu$ is, the less \enquote{trivial}, i.e. less constant, the solution is.
The reduction errors are displayed in Figure~\ref{fig:1DsG_red_errors_N=128_50epochs_singlesoliton}, the projection errors in Figure~\ref{fig:1DsG_proj_errors_N=128_50epochs_singlesoliton}
and the average epoch loss in Figure~\ref{fig:1DsineGordon_epoch_losses_N=128_50epochs_singlesoliton}.
Compared to the 1D wave equation, we were able to determine some interesting differences in the behavior of the individual autoencoder variants.
The reduction errors of the autoencoder variants outperform the PSD errors for small reduced dimensions $n\leq 10$ for all $\nu$-values, whereas 
for $n \geq 11$ the PSD method shows a~better error decrease and outperforms the autoencoder variants especially for small $\nu$ values.
The homogeneous Adam variants V$2$ and V$3$ show the best performance for very small dimensions $n$, but cannot improve a~lot for larger reduced 
dimensions $n$. We have displayed some reconstructions for $\nu=-0.73$ in Figure~\ref{reconstruction_comparison_single_soliton}. 
It shows the good performance of V$2$ and V$3$ in comparison to variant V$8$ and the PSD method in dimension $n=4$. 
It is also noticeable that the results of V$2$ and V$3$ become significantly worse the larger the norm of $\nu$ gets. 
This is particularly noticeable for $\nu=-0.97$, where we get significantly better results for the StiefelAdam variants V$6$-V$9$. 
This is also the only $\nu$ value where V$6$-V$9$ constantly give better results than the PSD comparison method. 
It suggests that the learned nonlinear autoencoder pair copes better with the increasing complexity of the system,
but has more problems to achieve as good results as the PSD method for rather constant solution reconstructions.
The projection error results show a~similar behavior. Here, we also have generally better results the lower the $\nu$-values are.
Again, we have the best performance for variants V$6$-V$9$, especially for $\nu=-0.97$. 
The average epoch loss shows similar results to the $1$D wave equation. We have a~very steep loss decrease at the beginning for all variants except 
V$3$. Interestingly, the unnormalized homogeneous Adam approach V$2$ shows a~significantly faster loss decrease than the normalized approach V$3$
and, for small reduced dimensions (like $n=5$), even a~faster loss decrease than the StiefelAdam variants V$6$-V$9$. 
Surprisingly, this faster loss decrease does not translate into better error values.

\subsubsection{Numerical results: soliton-soliton doublets}
We summarize all test runs and the obtained results in Table~\ref{sec:application_env:table_name:solitonsolitondoubletstestruns}. 
\begin{table}[ht!]
\resizebox{\textwidth}{!}{%
\centering
\begin{tabular}{c|c|c|c|c|c|c|c|c|c|c|c|c|c} 
 %\hline
 \textbf{Test run}      & \textbf{Variant}    & \multicolumn{11}{c|}{\textbf{Soliton-soliton doublets parameter values}}  & \textbf{Results}       \\
                        &                     & {\small\texttt{N}} & {\small\texttt{range\_n}} & {\small\texttt{epochs}}& {\small\texttt{batch}} & {\stexttt{t0}} & {\stexttt{t1}} & {\stexttt{a}} & {\stexttt{b}} & {\small\texttt{t\_steps}} & {\small\texttt{$\nu$\_range}} & {\small\texttt{$\nu$\_testing\_range}} & \\
 \hline 
 Run 1                  & V$2$                & $128$ & $3,4,...,15$ & $50$ & $32$ & $0.0$ & $4.0$ &-10.0 & 10.0 & $200$  & \stexttt{range}$(0.72,0.98,20)$ & $[0.73,0.85,0.92,0.97]$ & F\ref{fig:1DsG_red_errors_N=128_50epochs_solitonsolitondoublets}, F\ref{fig:1DsG_proj_errors_N=128_50epochs_solitonsolitondoublets}, F\ref{fig:1DsineGordon_epoch_losses_N=128_50epochs_solitonsolitiondoublets}, F\ref{reconstruction_comparison_solitonsolitiondoublets} \\
 \hline 
 Run 2                  & V$3$                & $128$ & $3,4,...,15$ & $50$ & $32$ & $0.0$ & $4.0$ &-10.0 & 10.0 & $200$  & \stexttt{range}$(0.72,0.98,20)$ & $[0.73,0.85,0.92,0.97]$ & F\ref{fig:1DsG_red_errors_N=128_50epochs_solitonsolitondoublets}, F\ref{fig:1DsG_proj_errors_N=128_50epochs_solitonsolitondoublets}, F\ref{fig:1DsineGordon_epoch_losses_N=128_50epochs_solitonsolitiondoublets}, F\ref{reconstruction_comparison_solitonsolitiondoublets} \\
 \hline 
 Run 3                  & V$6$                & $128$ & $3,4,...,15$ & $50$ & $32$ & $0.0$ & $4.0$ &-10.0 & 10.0 & $200$  & \stexttt{range}$(0.72,0.98,20)$ & $[0.73,0.85,0.92,0.97]$ & F\ref{fig:1DsG_red_errors_N=128_50epochs_solitonsolitondoublets}, F\ref{fig:1DsG_proj_errors_N=128_50epochs_solitonsolitondoublets}, F\ref{fig:1DsineGordon_epoch_losses_N=128_50epochs_solitonsolitiondoublets} \\
 \hline 
 Run 4                  & V$7$                & $128$ & $3,4,...,15$ & $50$ & $32$ & $0.0$ & $4.0$ &-10.0 & 10.0 & $200$  & \stexttt{range}$(0.72,0.98,20)$ & $[0.73,0.85,0.92,0.97]$ & F\ref{fig:1DsG_red_errors_N=128_50epochs_solitonsolitondoublets}, F\ref{fig:1DsG_proj_errors_N=128_50epochs_solitonsolitondoublets}, F\ref{fig:1DsineGordon_epoch_losses_N=128_50epochs_solitonsolitiondoublets} \\
 \hline 
 Run 5                  & V$8$                & $128$ & $3,4,...,15$ & $50$ & $32$ & $0.0$ & $4.0$ &-10.0 & 10.0 & $200$  & \stexttt{range}$(0.72,0.98,20)$ & $[0.73,0.85,0.92,0.97]$ & F\ref{fig:1DsG_red_errors_N=128_50epochs_solitonsolitondoublets}, F\ref{fig:1DsG_proj_errors_N=128_50epochs_solitonsolitondoublets}, F\ref{fig:1DsineGordon_epoch_losses_N=128_50epochs_solitonsolitiondoublets}, F\ref{reconstruction_comparison_solitonsolitiondoublets} \\
 \hline 
 Run 6                  & V$9$                & $128$ & $3,4,...,15$ & $50$ & $32$ & $0.0$ & $4.0$ &-10.0 & 10.0 & $200$  & \stexttt{range}$(0.72,0.98,20)$ & $[0.73,0.85,0.92,0.97]$ & F\ref{fig:1DsG_red_errors_N=128_50epochs_solitonsolitondoublets}, F\ref{fig:1DsG_proj_errors_N=128_50epochs_solitonsolitondoublets}, F\ref{fig:1DsineGordon_epoch_losses_N=128_50epochs_solitonsolitiondoublets} \\
\end{tabular}
}
\caption{Soliton-soliton doublets test runs.}
\label{sec:application_env:table_name:solitonsolitondoubletstestruns}
\end{table}
Analogous to the single soliton conditions, we learned the variants V$2$, V$3$ and V$6$-V$9$ for $50$ epochs and compared the results to the PSD method. 
We used the system parameters
\begin{itemize}
\item $N=128$,
\item $n=3,...,15$,
\item $\I=[0.0, 4.0]$,
\item $\Omega=[-10.0,10.0]$,
\item $\nu_{\tt testing}=0.73, 0.85, 0.92, 0.97$.
\end{itemize}
In contrast to single soliton, we now use the $\nu$-range $\P=[0.72, 0.98] \subseteq (-1,1)$ to build our training set. 
Similar to single soliton, the possible $\nu$-range for the soliton-soliton doublets conditions is the interval $(-1,1)$.  
Again, we have a~faster movement in time for $\nu$-values around $-1$ and $1$.
Thus, we now decided to choose $\P$ as a~sub-interval near $1$. 
The reduction errors are displayed in Figure~\ref{fig:1DsG_red_errors_N=128_50epochs_solitonsolitondoublets}, the projection errors in Figure~\ref{fig:1DsG_proj_errors_N=128_50epochs_solitonsolitondoublets}
and the average epoch loss in Figure~\ref{fig:1DsineGordon_epoch_losses_N=128_50epochs_solitonsolitiondoublets}.
The results are similar to that we got for the single soliton conditions. Compared to the PSD method, 
we see that the autoencoder errors are smaller for small reduced dimensions and that the PSD method catches up the larger the dimensions become.
We also see a~tendency towards better results for small $\nu$-values. 
For $\nu=0.92$ and $\nu=0.97$, we observe a~better performance of the StiefelAdam variants in comparison to the homogeneous Adam variants V$2$ and V$3$.
These performance differences can also be observed in the reconstructions for $\nu=0.97$ displayed in Figure~\ref{reconstruction_comparison_solitonsolitiondoublets}. 
The average epoch loss shows the same pattern as for the single soliton conditions. We have the fastest loss decrease for V$2$ in small dimensions 
(here $n=5$) and the slowest decrease for V$3$.

\newpage
\section{Conclusion}

This master thesis dealt with the structure-preserving model reduction of Hamiltonian systems with the goal of solving them efficiently. 
We discussed an~autoencoder network, first introduced by \cite{main:brantner2023}, that learns a~symplectic encoder-decoder pair 
to conduct the model reduction. 
In particular, we presented our own variations of the network and training setup. We paid special attention to the manifold 
update step of the network and introduced a~new StiefelAdam manifold optimizer step that works directly on the Stiefel manifold by modifying the 
classical Adam optimizer and avoids the use of homogeneous spaces as proposed in \cite{main:brantner2023}. 
We compared the two different approaches (as well as other network and training modifications) and 
were able to identify some advantages of the StiefelAdam optimizer over the HomogeneousAdam approach.
The significantly better numerical efficiency, the faster loss decrease in the first epochs 
and the better performance for larger variable system parameter values (i.e. $\mu=0.625$ and $\nu= 0.97$) stood out in particular.
But we could also observe some drawbacks of our approach. 
The most surprising drawback is the strong slow-down of the loss decrease 
after only a~few epochs, which we did not observe for the HomogeneousAdam approach. 
Further improvements can also be made by using a~more elaborate decay of the learning rate by utilizing 
information of previous iterations. Testing other system parameter constellations may yield some further improvements. 
Furthermore, it might also be worthwhile to take a~look at the symplectic preprocessing layers and try to improve in terms of numerical efficiency.
This offers many opportunities for further investigation and research on this optimizer and the network in general to address these issues 
and improve the results.

%% ---------------------------------------------------------------------------- %
% References
% ---------------------------------------------------------------------------- %
\newpage
\printbibliography[heading=bibintoc]

%\bibliographystyle{alpha}
%\bibliography{references}

% ---------------------------------------------------------------------------- %
% Appendix
%% ---------------------------------------------------------------------------- %
\newpage \appendix

\section{Appendix: Plots}

\begin{figure}[ht!]
\resizebox{\textwidth}{!}{%
  \centering
  \begin{subfigure}{0.43\textwidth}
    \centering
    \rotatebox{90}{\includegraphics[width=1.15\textwidth, height=.26\textheight]{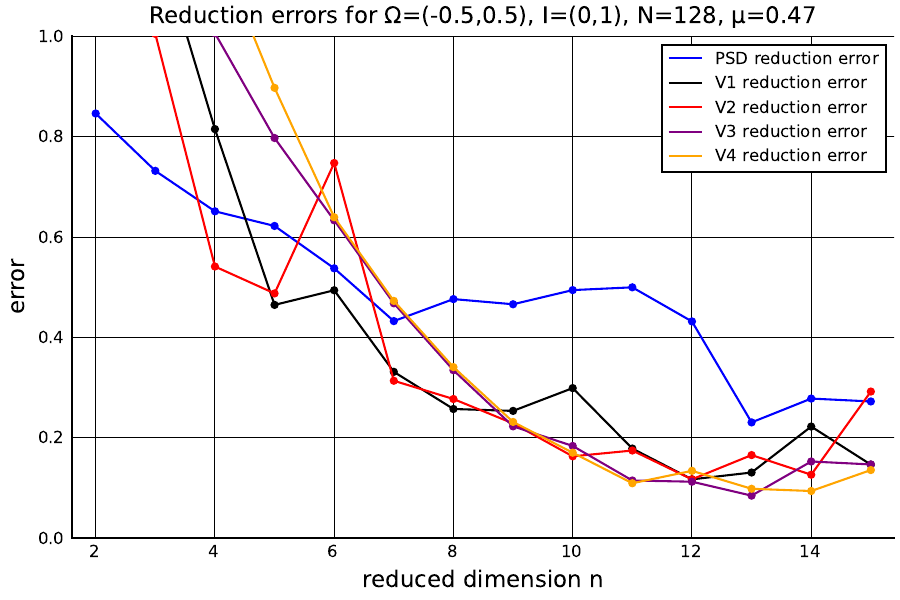}}
    \caption{$\mu=0.47$}
    \label{img:1Dwave_red_errors_N=128_100epochs_PSD+V1-V4_047}
  \end{subfigure} 
 % \hfill
  \begin{subfigure}{0.43\textwidth}
    \centering
    \rotatebox{90}{\includegraphics[width=1.15\textwidth, height=.26\textheight]{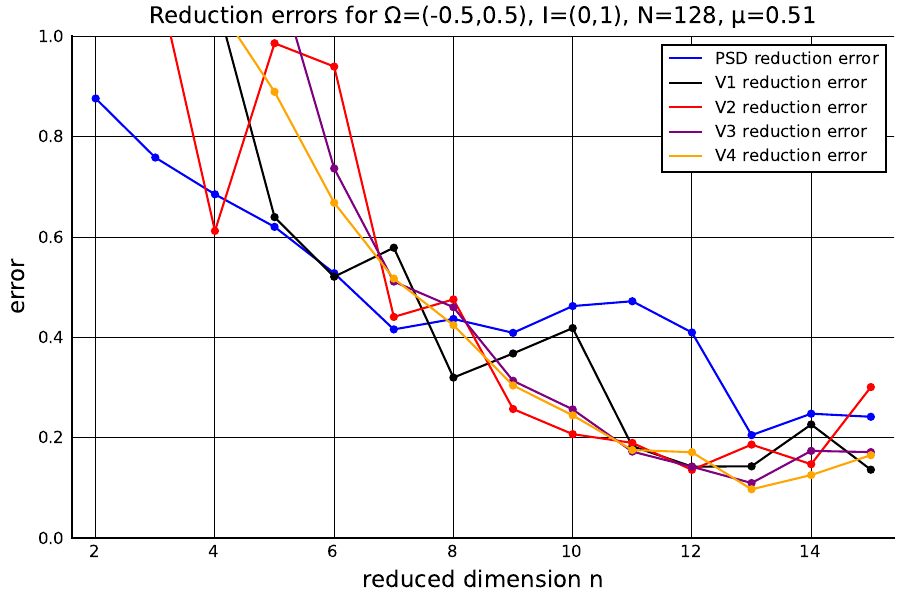}}
    \caption{$\mu=0.51$}
    \label{img:1Dwave_red_errors_N=128_100epochs_PSD+V1-V4_051}
  \end{subfigure}
}  
\resizebox{\textwidth}{!}{%
  \begin{subfigure}{0.43\textwidth}
    \centering
    \rotatebox{90}{\includegraphics[width=1.15\textwidth, height=.26\textheight]{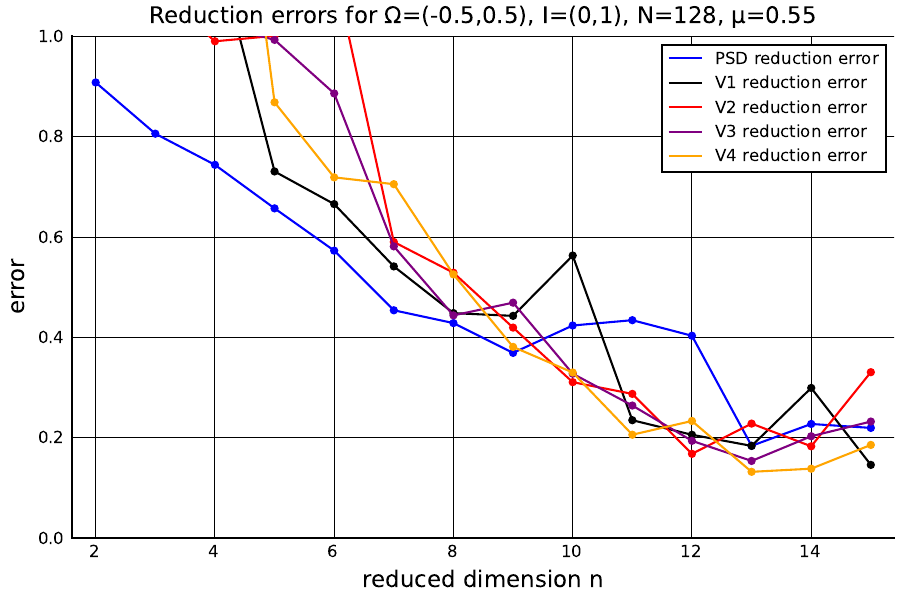}}
    \caption{$\mu=0.55$}
    \label{img:1Dwave_red_errors_N=128_100epochs_PSD+V1-V4_055}
  \end{subfigure}
  %\hfill
  \begin{subfigure}{0.43\textwidth}
    \centering
    \rotatebox{90}{\includegraphics[width=1.15\textwidth, height=.26\textheight]{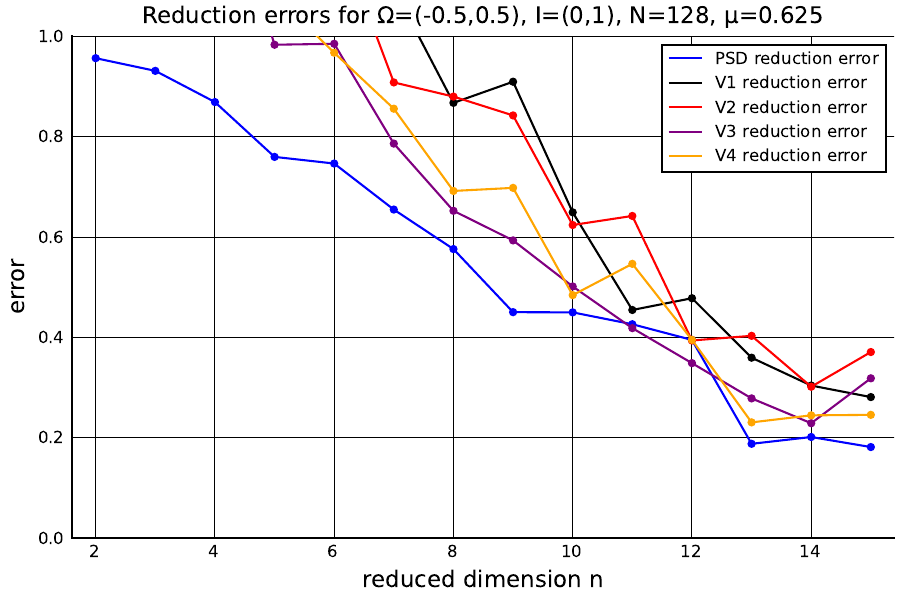}}
    \caption{$\mu=0.625$}
    \label{img:1Dwave_red_errors_N=128_100epochs_PSD+V1-V4_0625}
  \end{subfigure}
}
\caption{$1$D linear wave equation reduction errors for variants V$1$-V$4$ and the PSD method in dimensions $N=128$ and $n=2,3,...,15$ for $100$ epochs test runs.}
  \label{fig:1Dwave_red_errors_N=128_100epochs_PSD+V1-V4}
\end{figure}

\begin{figure}[ht!] % h = Here, t = Top, b = Bottom, p = same Page, ! = DO AS I SAY
\resizebox{\textwidth}{!}{%
  \centering
  \begin{subfigure}{0.43\textwidth}
    \centering
    \rotatebox{90}{\includegraphics[width=1.2\textwidth, height=.27\textheight]{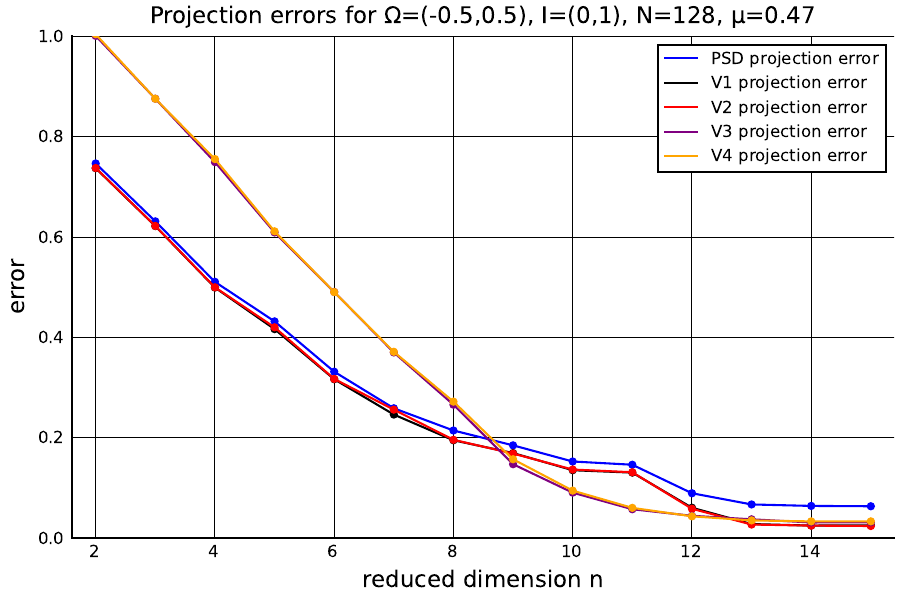}}
    \caption{$\mu=0.47$}
    \label{img:1Dwave_proj_errors_N=128_100epochs_PSD+V1-V4_047}
  \end{subfigure} 
 % \hfill
  \begin{subfigure}{0.43\textwidth}
    \centering
    \rotatebox{90}{\includegraphics[width=1.2\textwidth, height=.27\textheight]{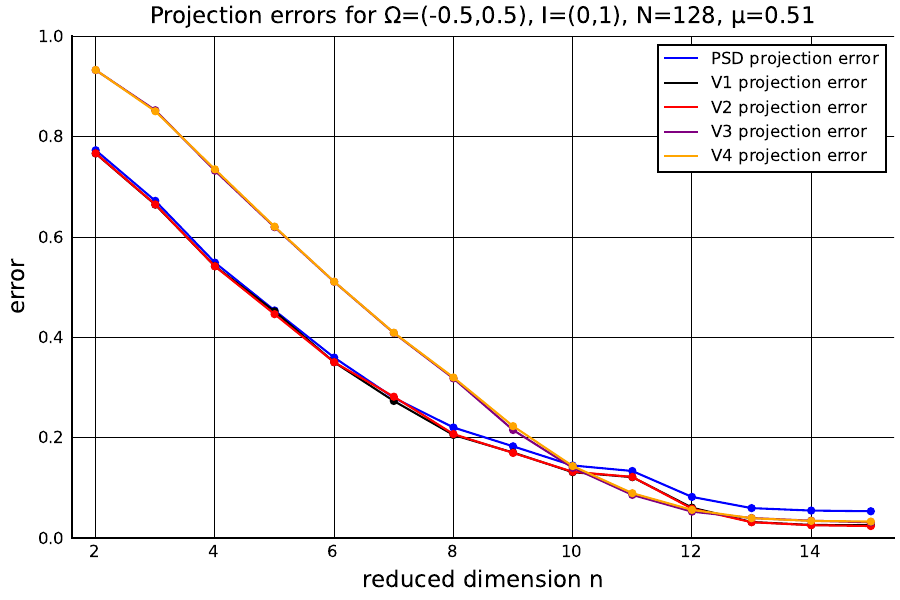}}
    \caption{$\mu=0.51$}
    \label{img:1Dwave_proj_errors_N=128_100epochs_PSD+V1-V4_051}
  \end{subfigure}
}  
\resizebox{\textwidth}{!}{%
  \begin{subfigure}{0.43\textwidth}
    \centering
    \rotatebox{90}{\includegraphics[width=1.2\textwidth, height=.27\textheight]{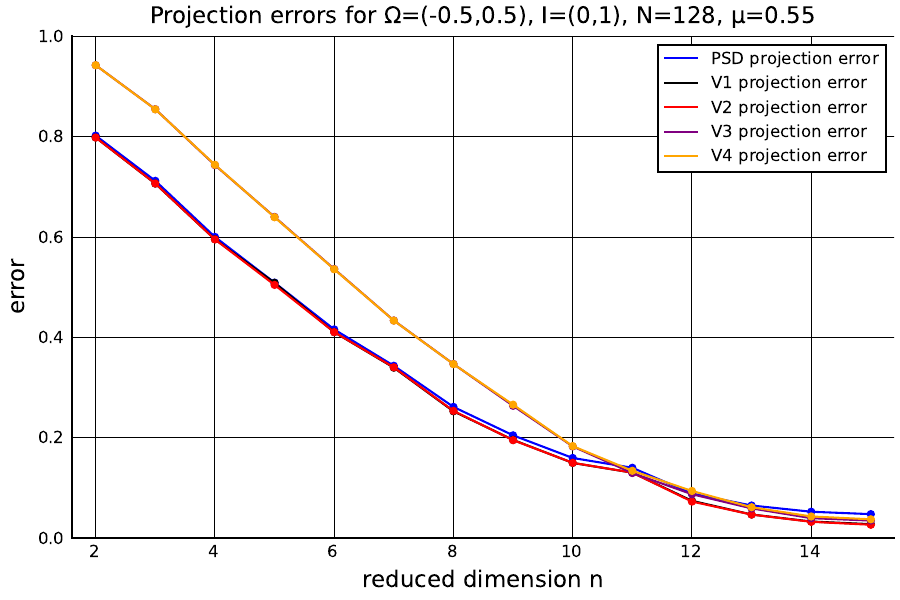}}
    \caption{$\mu=0.55$}
    \label{img:1Dwave_proj_errors_N=128_100epochs_PSD+V1-V4_055}
  \end{subfigure}
  %\hfill
  \begin{subfigure}{0.43\textwidth}
    \centering
    \rotatebox{90}{\includegraphics[width=1.2\textwidth, height=.27\textheight]{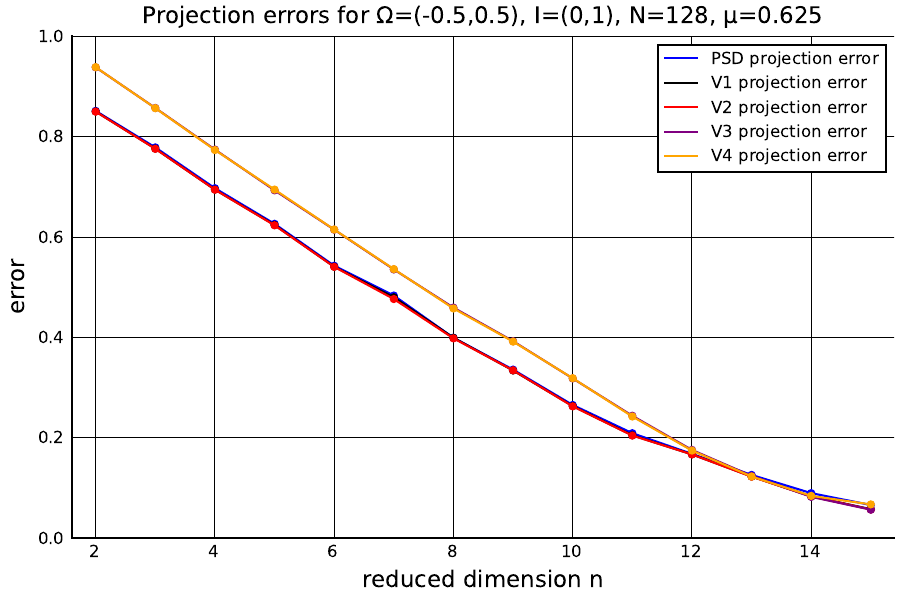}}
    \caption{$\mu=0.625$}
    \label{img:1Dwave_proj_errors_N=128_100epochs_PSD+V1-V4_0625}
  \end{subfigure}
}
\caption{$1$D linear wave equation projection errors for variants V$1$-V$4$ and the PSD method in dimensions $N=128$ and $n=2,3,...,15$ for $100$ epochs test runs.}
  \label{fig:1Dwave_proj_N=128_100epochs_PSD+V1-V4}
\end{figure}

\begin{figure}[ht!] % h = Here, t = Top, b = Bottom, p = same Page, ! = DO AS I SAY
\resizebox{\textwidth}{!}{%
  \centering
  \begin{subfigure}{0.43\textwidth}
    \centering
    \rotatebox{90}{\includegraphics[width=1.2\textwidth, height=.27\textheight]{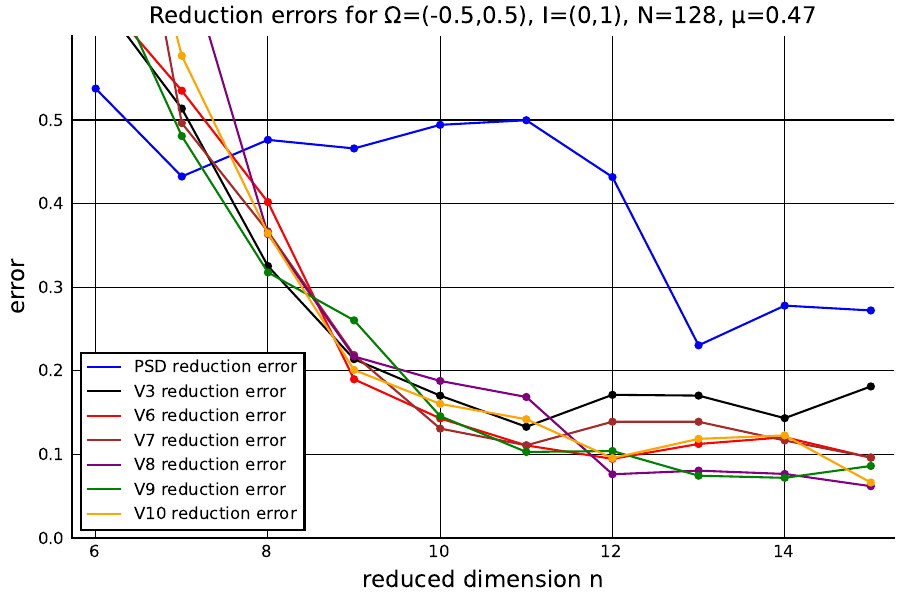}}
    \caption{$\mu=0.47$}
    \label{img:1Dwave_red_errors_N=128_50epochs_PSD+V1-V4_047}
  \end{subfigure} 
 % \hfill
  \begin{subfigure}{0.43\textwidth}
    \centering
    \rotatebox{90}{\includegraphics[width=1.2\textwidth, height=.27\textheight]{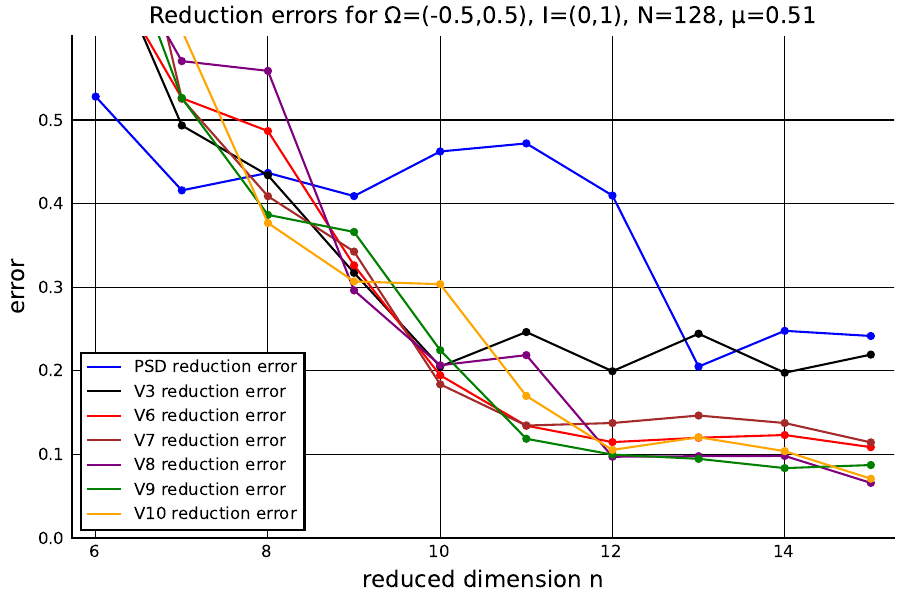}}
    \caption{$\mu=0.51$}
    \label{img:1Dwave_red_errors_N=128_50epochs_PSD+V1-V4_051}
  \end{subfigure}
}  
\resizebox{\textwidth}{!}{%
  \begin{subfigure}{0.43\textwidth}
    \centering
    \rotatebox{90}{\includegraphics[width=1.2\textwidth, height=.27\textheight]{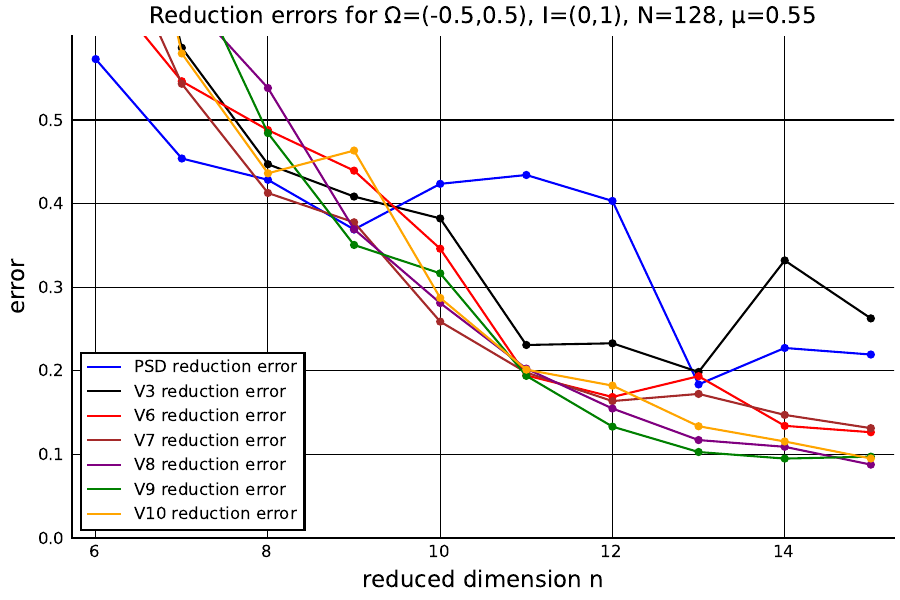}}
    \caption{$\mu=0.55$}
    \label{img:1Dwave_red_errors_N=128_50epochs_PSD+V1-V4_055}
  \end{subfigure}
  %\hfill
  \begin{subfigure}{0.43\textwidth}
    \centering
    \rotatebox{90}{\includegraphics[width=1.2\textwidth, height=.27\textheight]{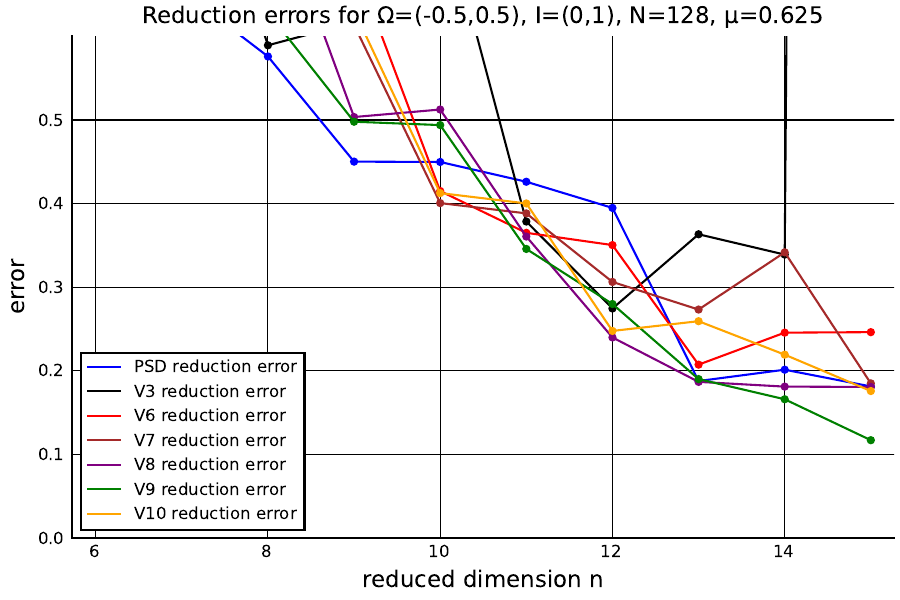}}
    \caption{$\mu=0.625$}
    \label{img:1Dwave_red_errors_N=128_50epochs_PSD+V1-V4_0625}
  \end{subfigure}
}
\caption{$1$D linear wave equation reduction errors for variants V$3$, V$6$-V$10$ and the PSD method in dimensions $N=128$ and $n=6,7,...,15$ for $50$ epochs test runs.}
  \label{fig:1Dwave_red_errors_N=128_50epochs_PSD+V3_V6_V10}
\end{figure}

\begin{figure}[ht!] % h = Here, t = Top, b = Bottom, p = same Page, ! = DO AS I SAY
\resizebox{\textwidth}{!}{%
  \centering
    \includegraphics[width=0.05\textwidth]{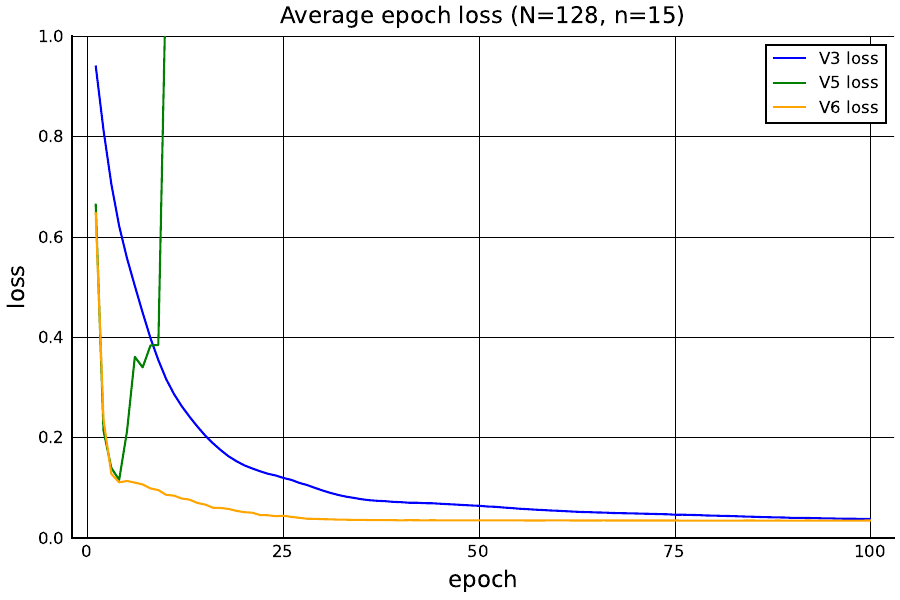}
}
\caption{$1$D linear wave equation average epoch target function losses for variants V$3$, V$5$ and V$6$ in dimensions $N=128$ and $n=15$ for $100$ epochs test runs.}
  \label{fig:1Dwave_epoch_losses_N=128_100epochs_V3_V5_V6}
\end{figure}

\begin{figure}[ht!] % h = Here, t = Top, b = Bottom, p = same Page, ! = DO AS I SAY
\resizebox{\textwidth}{!}{%
  \centering
    \includegraphics[width=0.05\textwidth]{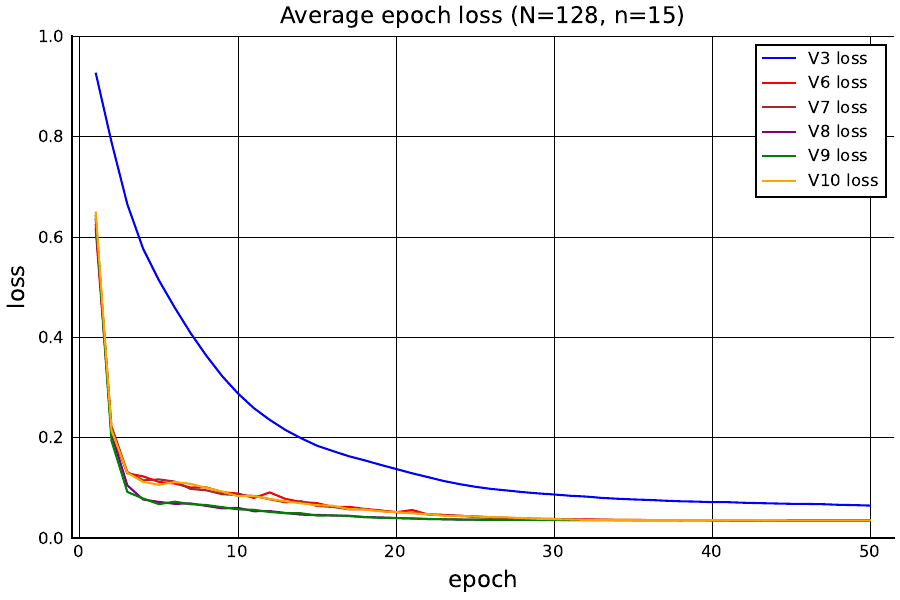}
}
\caption{$1$D linear wave equation average epoch target function losses for variants V$3$ and V$6$-V$10$ in dimensions $N=128$ and $n=15$ for $50$ epochs test runs.}
  \label{fig:1Dwave_epoch_losses_N=128_50epochs_V3_V6-V10}
\end{figure}

\begin{figure}[ht!] % h = Here, t = Top, b = Bottom, p = same Page, ! = DO AS I SAY
\resizebox{\textwidth}{!}{%
  \centering
  \begin{subfigure}{0.43\textwidth}
    \centering
    \rotatebox{90}{\includegraphics[width=1.2\textwidth, height=.27\textheight]{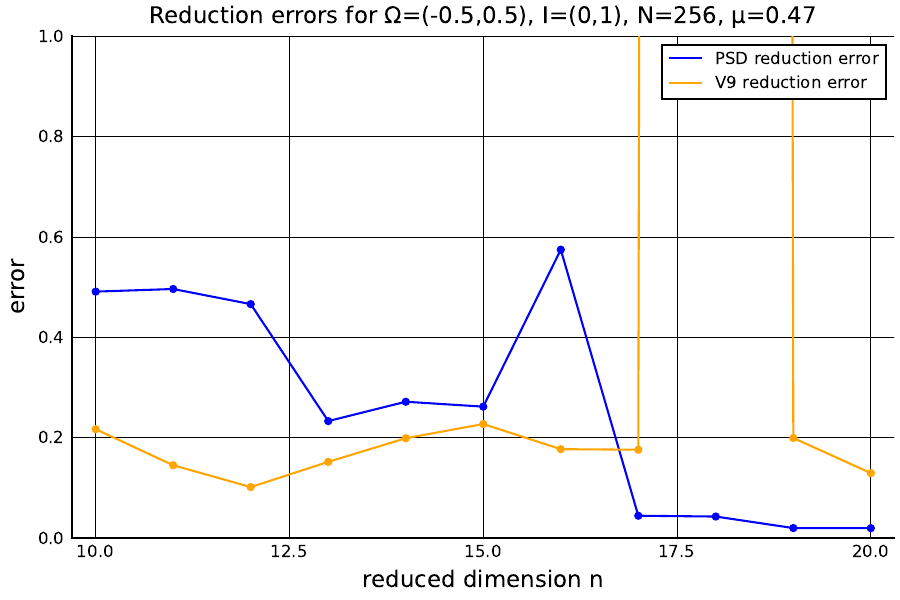}}
    \caption{$\mu=0.47$}
  \end{subfigure} 
 % \hfill
  \begin{subfigure}{0.43\textwidth}
    \centering
    \rotatebox{90}{\includegraphics[width=1.2\textwidth, height=.27\textheight]{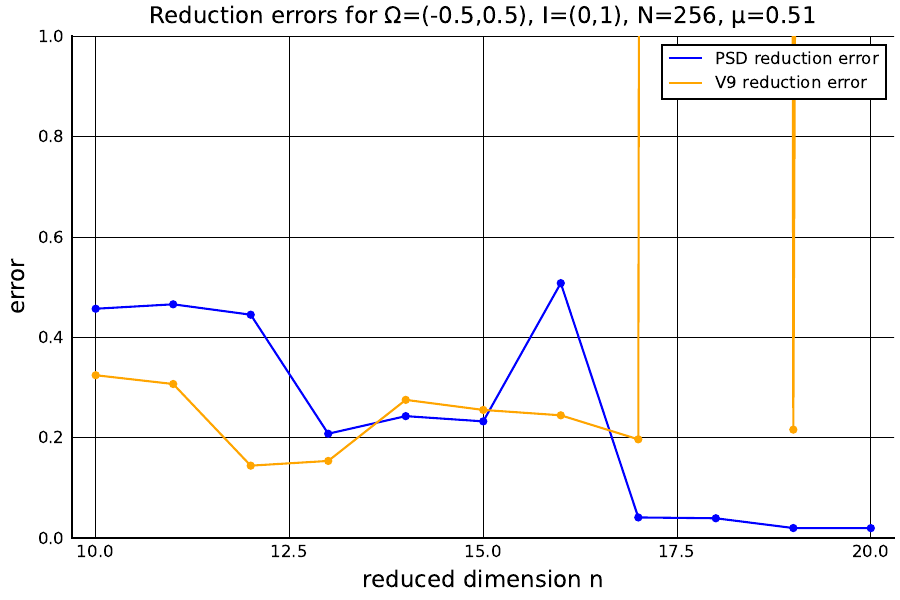}}
    \caption{$\mu=0.51$}
  \end{subfigure}
}  
\resizebox{\textwidth}{!}{%
  \begin{subfigure}{0.43\textwidth}
    \centering
    \rotatebox{90}{\includegraphics[width=1.2\textwidth, height=.27\textheight]{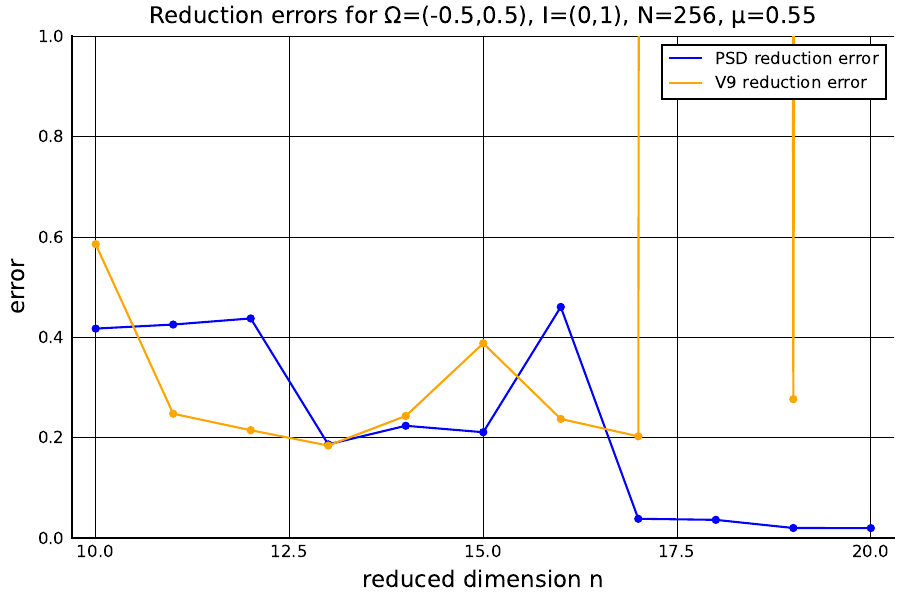}}
    \caption{$\mu=0.55$}
  \end{subfigure}
  %\hfill
  \begin{subfigure}{0.43\textwidth}
    \centering
    \rotatebox{90}{\includegraphics[width=1.2\textwidth, height=.27\textheight]{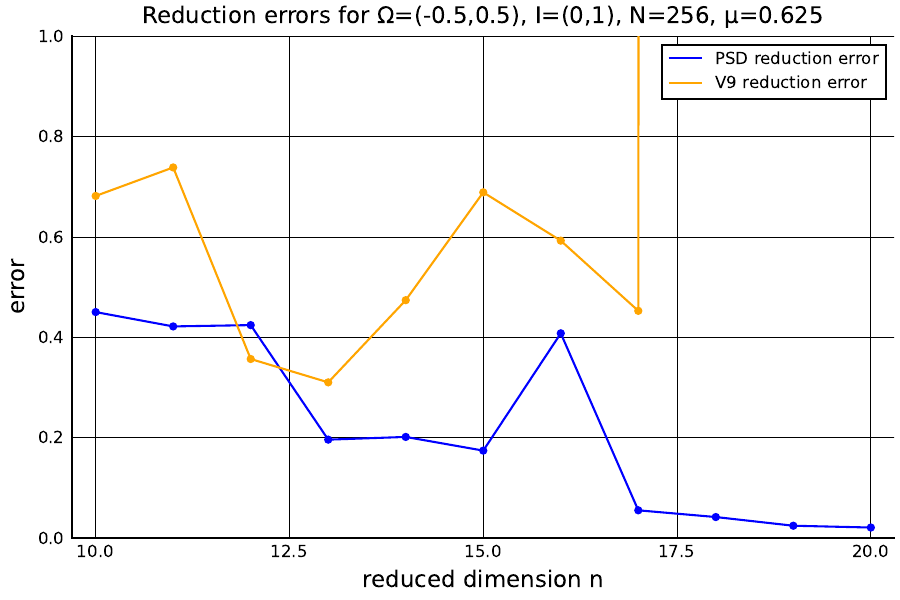}}
    \caption{$\mu=0.625$}
  \end{subfigure}
}
\caption{Direct comparison of the $1$D linear wave equation reduction errors for variant V$9$ and the PSD method in dimensions $N=256$ and $n=10,...,20$ for $50$ epochs test runs.}
  \label{fig:1Dwave_red_errors_N=256_50epochs_V9vsPSD}
\end{figure}

\begin{figure}[ht!] % h = Here, t = Top, b = Bottom, p = same Page, ! = DO AS I SAY
  \centering
\begin{subfigure}{0.45\linewidth}
    \centering
    \includegraphics[width=\linewidth, scale=.35]{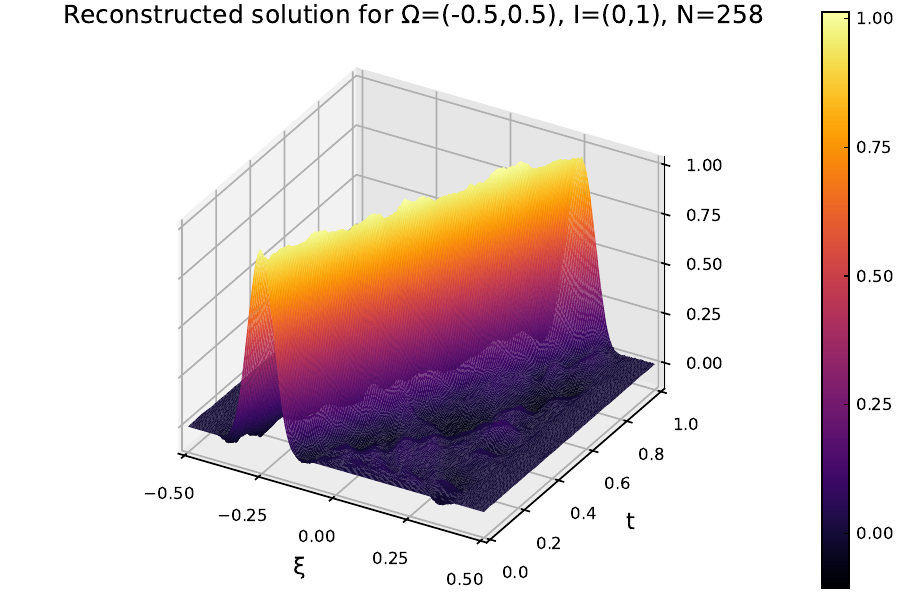}
    \caption{$\mu=0.51$, V$9$.}
  \end{subfigure}
  \hfill
  \begin{subfigure}{0.45\linewidth}
    \centering
    \includegraphics[width=\linewidth, scale=.35]{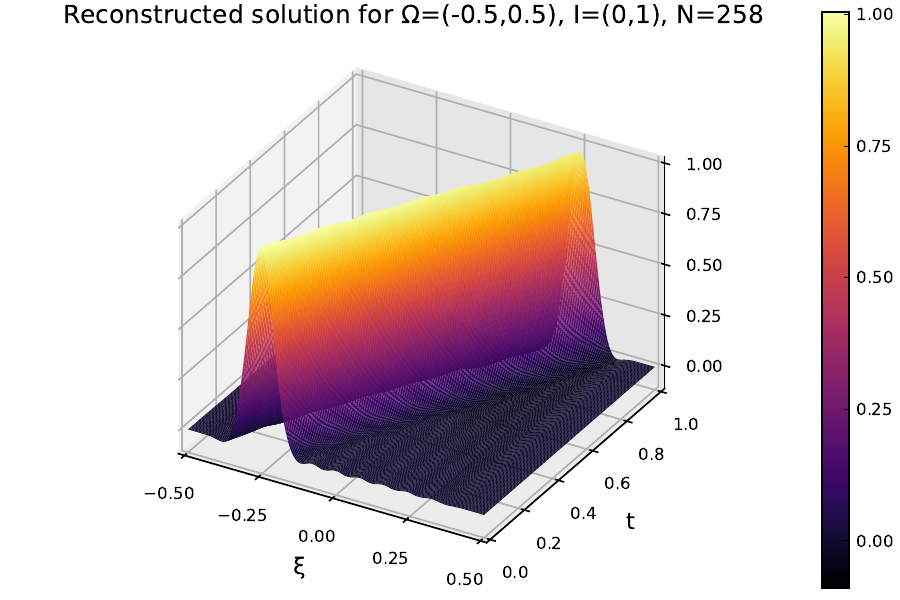}
    \caption{$\mu=0.51$, PSD.}
  \end{subfigure}

  \begin{subfigure}{0.45\linewidth}
    \centering
    \includegraphics[width=\linewidth, scale=.35]{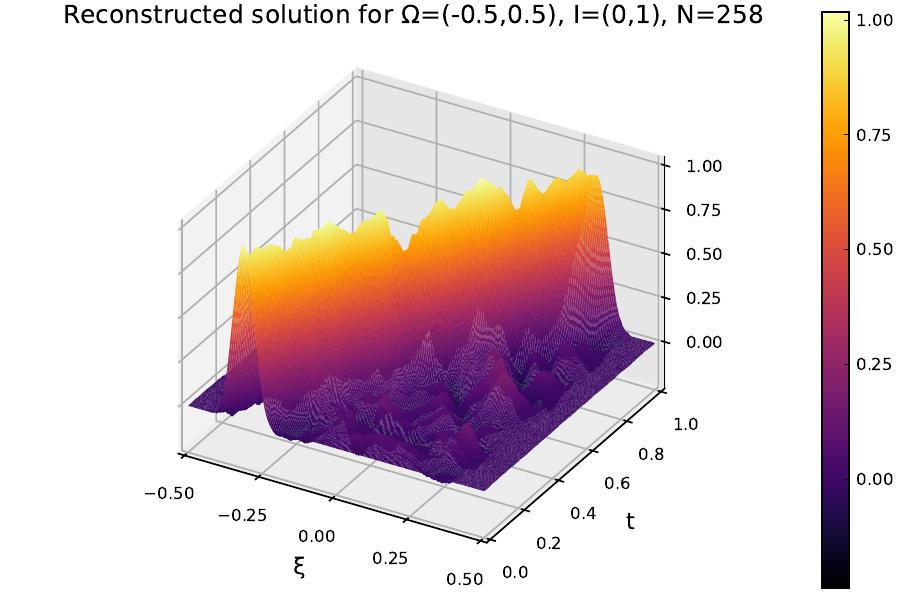}
    \caption{$\mu=0.625$, V$9$.}
  \end{subfigure}
  \hfill
  \begin{subfigure}{0.45\linewidth}
    \centering
    \includegraphics[width=\linewidth, scale=.35]{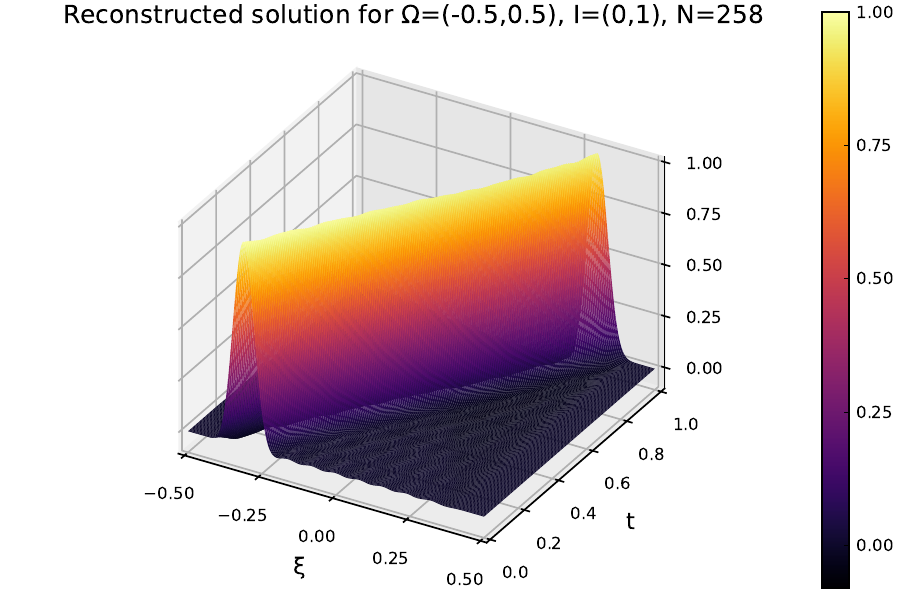}
    \caption{$\mu=0.625$, PSD.}
  \end{subfigure}
  \caption{Comparison of the reconstruction of the $1$D wave equation from the integrated ROM solution in dimension $n=17$ to the full dimension $N=256$ using the autoencoder variant V$9$ ($50$ epochs) and the PSD method.}
  \label{reconstruction comparison}
\end{figure}

\begin{figure}[ht!] % h = Here, t = Top, b = Bottom, p = same Page, ! = DO AS I SAY
\resizebox{\textwidth}{!}{%
  \centering
  \begin{subfigure}{0.43\textwidth}
    \centering
    \rotatebox{90}{\includegraphics[width=1.2\textwidth, height=.27\textheight]{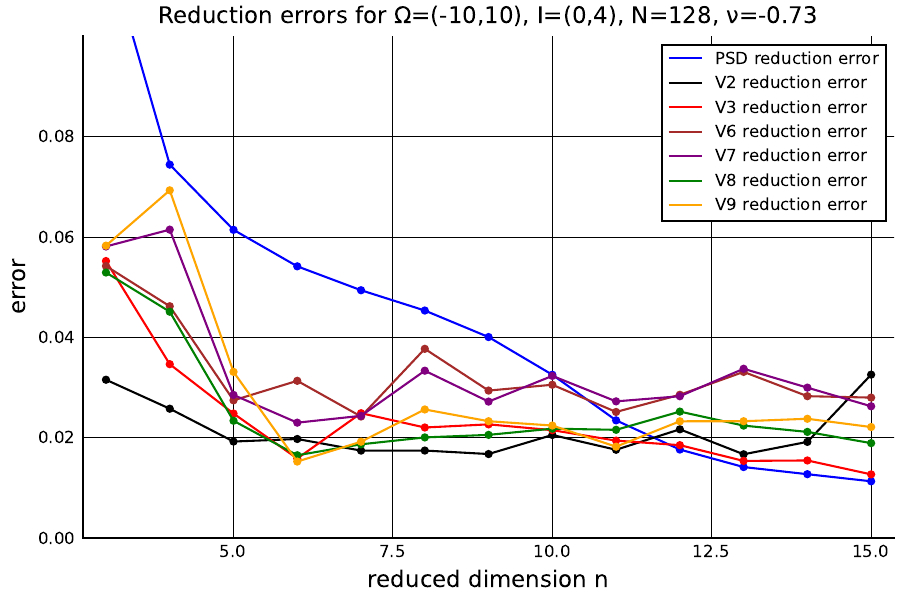}}
    \caption{$\nu=-0.73$}
  \end{subfigure} 
 % \hfill
  \begin{subfigure}{0.43\textwidth}
    \centering
    \rotatebox{90}{\includegraphics[width=1.2\textwidth, height=.27\textheight]{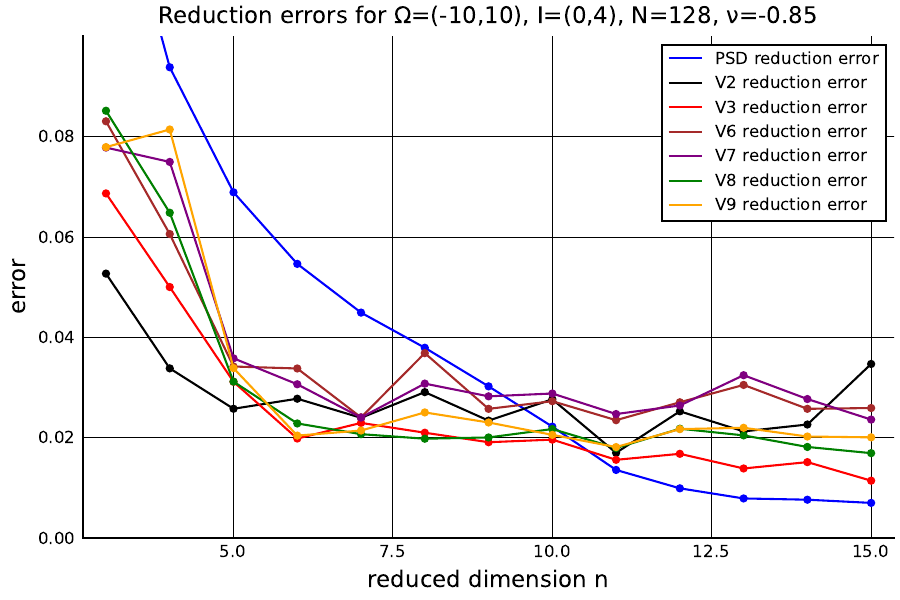}}
    \caption{$\nu=-0.85$}
  \end{subfigure}
}  
\resizebox{\textwidth}{!}{%
  \begin{subfigure}{0.43\textwidth}
    \centering
    \rotatebox{90}{\includegraphics[width=1.2\textwidth, height=.27\textheight]{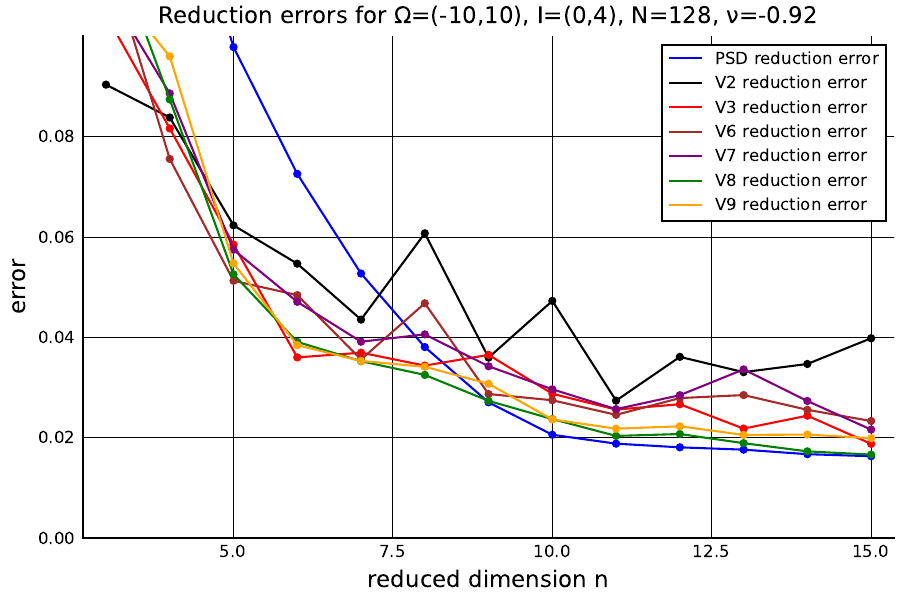}}
    \caption{$\nu=-0.92$}
  \end{subfigure}
  %\hfill
  \begin{subfigure}{0.43\textwidth}
    \centering
    \rotatebox{90}{\includegraphics[width=1.2\textwidth, height=.27\textheight]{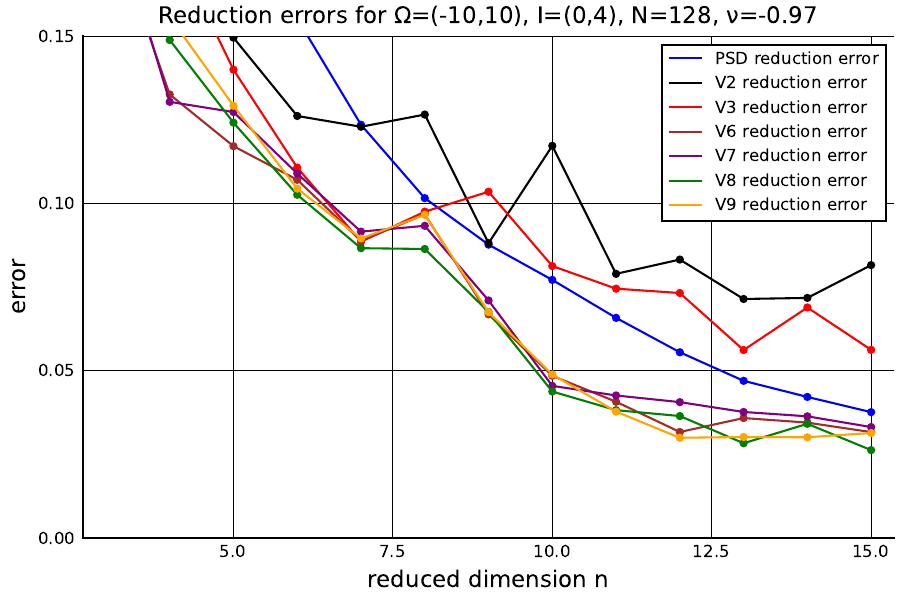}}
    \caption{$\nu=-0.97$}
  \end{subfigure}
}
\caption{Reduction errors for the $1$D sine-Gordon equation with single soliton boundary conditions in dimensions $N=128$ and $n=3,4,...,15$ and $50$ epochs test runs.}
  \label{fig:1DsG_red_errors_N=128_50epochs_singlesoliton}
\end{figure}

\begin{figure}[ht!] % h = Here, t = Top, b = Bottom, p = same Page, ! = DO AS I SAY
\resizebox{\textwidth}{!}{%
  \centering
  \begin{subfigure}{0.43\textwidth}
    \centering
    \rotatebox{90}{\includegraphics[width=1.2\textwidth, height=.27\textheight]{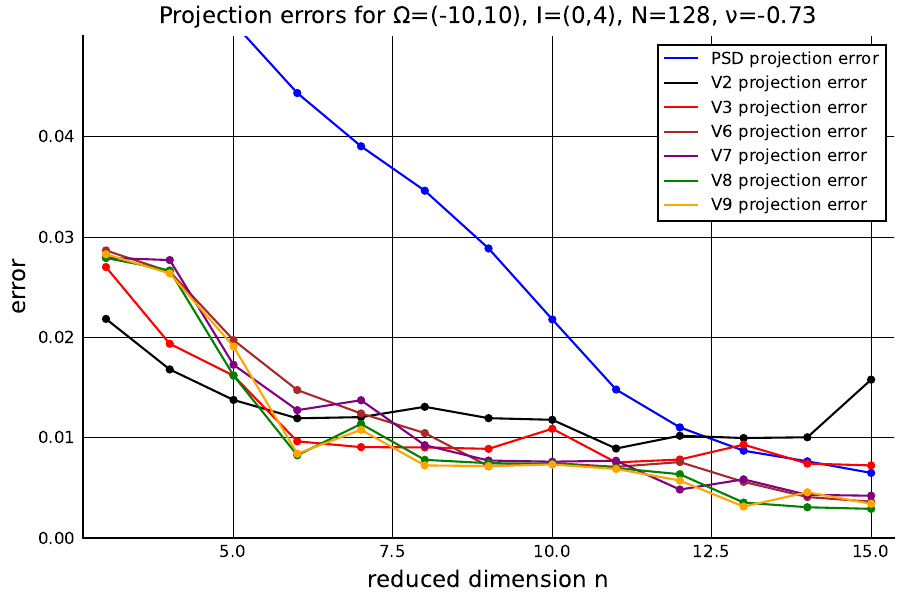}}
    \caption{$\nu=-0.73$}
  \end{subfigure} 
 % \hfill
  \begin{subfigure}{0.43\textwidth}
    \centering
    \rotatebox{90}{\includegraphics[width=1.2\textwidth, height=.27\textheight]{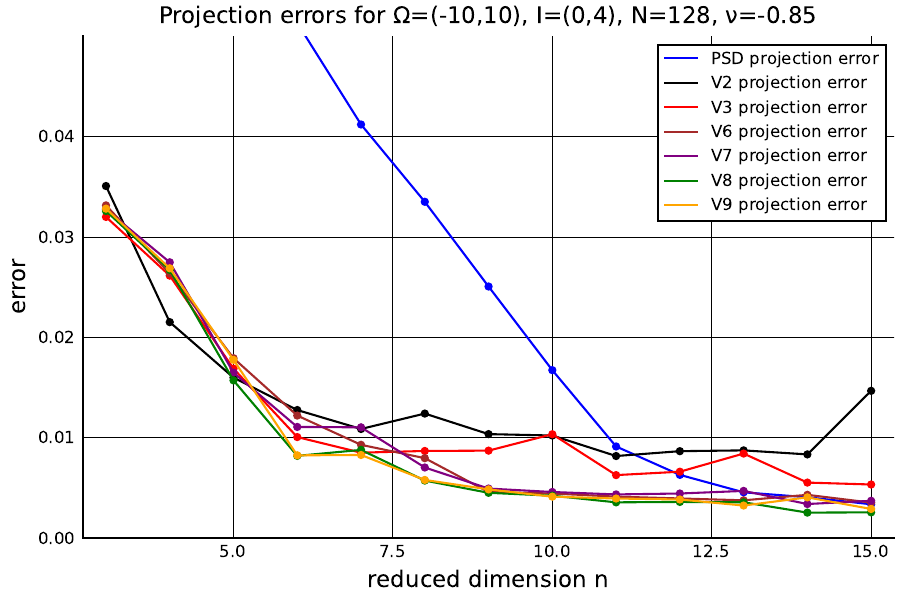}}
    \caption{$\nu=-0.85$}
  \end{subfigure}
}  
\resizebox{\textwidth}{!}{%
  \begin{subfigure}{0.43\textwidth}
    \centering
    \rotatebox{90}{\includegraphics[width=1.2\textwidth, height=.27\textheight]{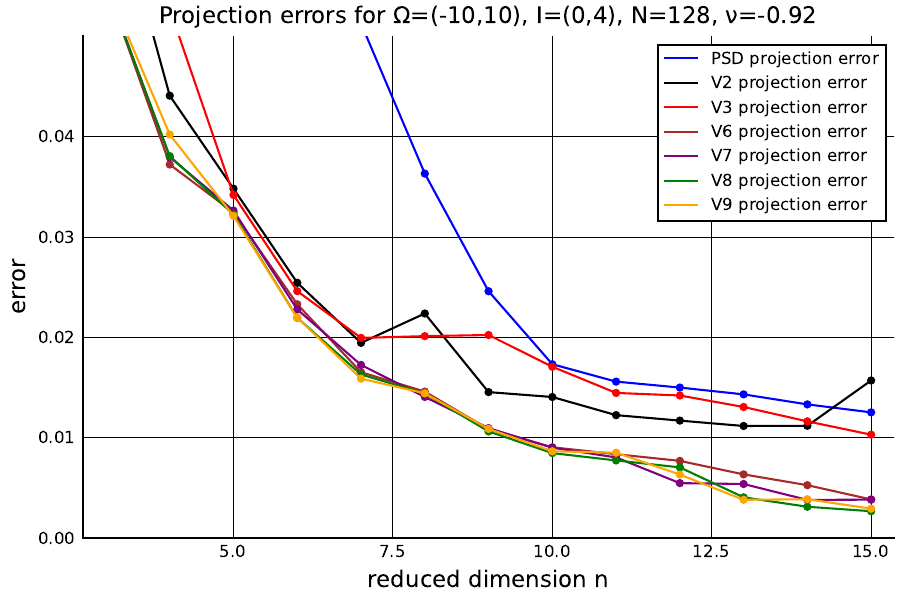}}
    \caption{$\nu=-0.92$}
  \end{subfigure}
  %\hfill
  \begin{subfigure}{0.43\textwidth}
    \centering
    \rotatebox{90}{\includegraphics[width=1.2\textwidth, height=.27\textheight]{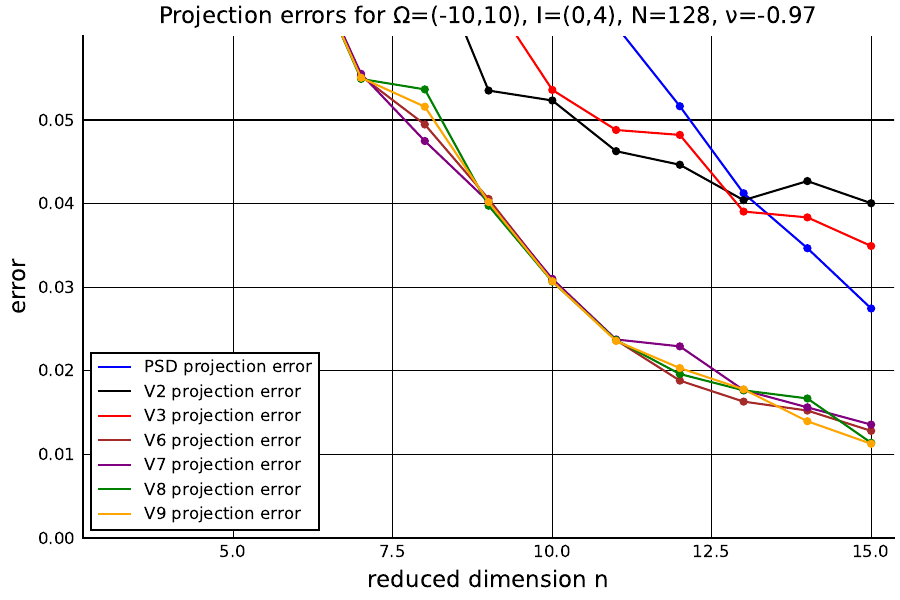}}
    \caption{$\nu=-0.97$}
  \end{subfigure}
}
\caption{Projection errors for the $1$D sine-Gordon equation with single soliton boundary conditions in dimensions $N=128$ and $n=3,4,...,15$ and $50$ epochs test runs.}
  \label{fig:1DsG_proj_errors_N=128_50epochs_singlesoliton}
\end{figure}

\begin{figure}[ht!] % h = Here, t = Top, b = Bottom, p = same Page, ! = DO AS I SAY
  \begin{subfigure}{\textwidth}
    \resizebox{\textwidth}{!}{%
      \centering
        \includegraphics[width=0.05\textwidth]{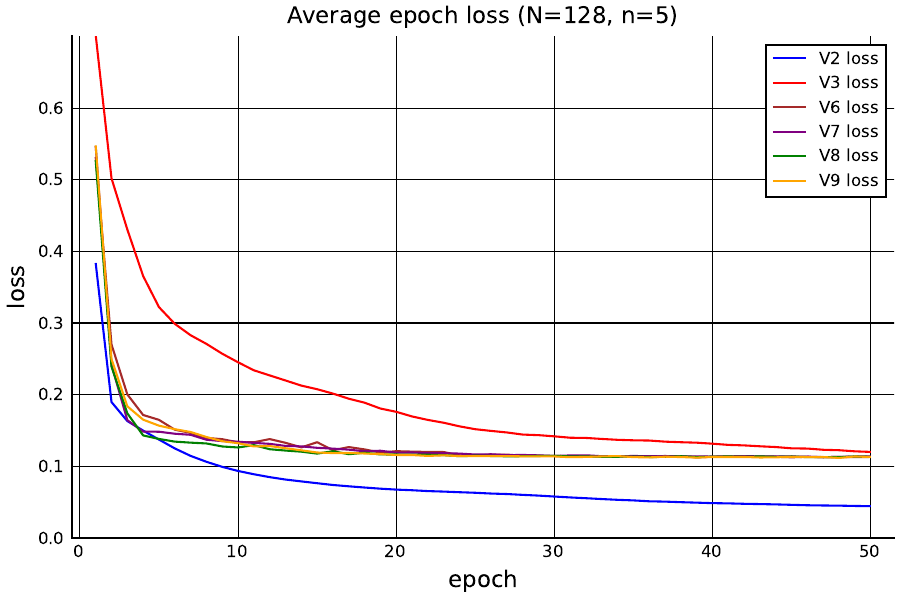}
    }
    \caption{$n=5$}
  \end{subfigure}
  %\hfill
  \begin{subfigure}{\textwidth}
    \resizebox{\textwidth}{!}{%
      \centering
        \includegraphics[width=0.05\textwidth]{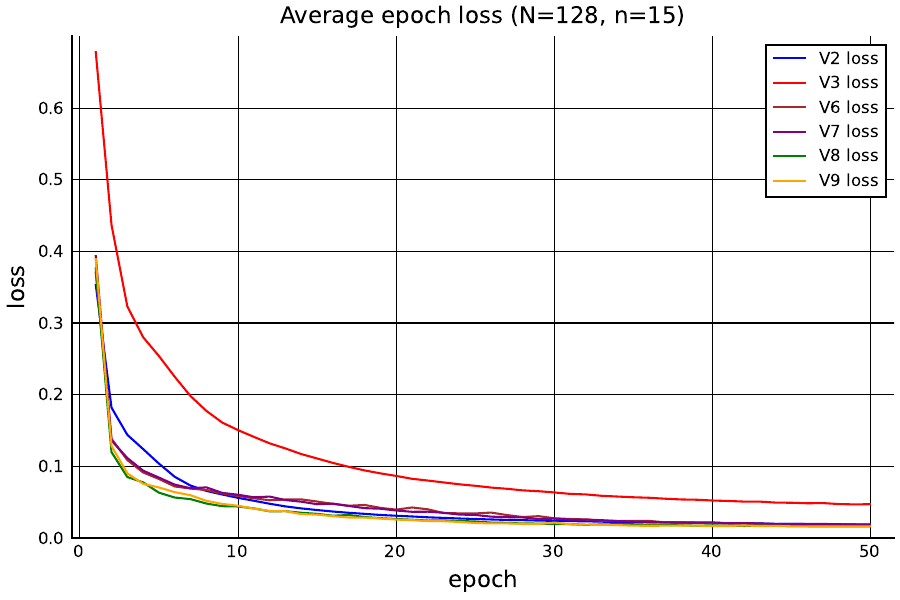}
    }
    \caption{$n=15$}
  \end{subfigure}
\caption{Comparison of $1$D sine-Gordon equation (single soliton) average epoch target function losses for variants V$2$, V$3$ and V$6$-V$9$ in dimension $N=128$ for $50$ epochs test runs.}
  \label{fig:1DsineGordon_epoch_losses_N=128_50epochs_singlesoliton}
\end{figure}

\begin{figure}[ht!] % h = Here, t = Top, b = Bottom, p = same Page, ! = DO AS I SAY
  \centering
\begin{subfigure}{0.45\linewidth}
    \centering
    \includegraphics[width=\linewidth, scale=.35]{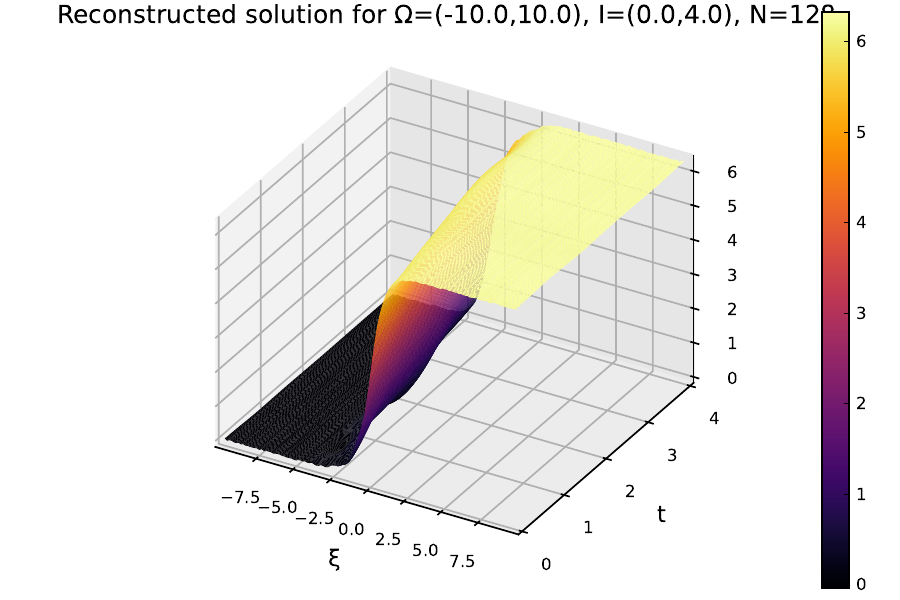}
    \caption{V$2$}
  \end{subfigure}
  \hfill
  \begin{subfigure}{0.45\linewidth}
    \centering
    \includegraphics[width=\linewidth, scale=.35]{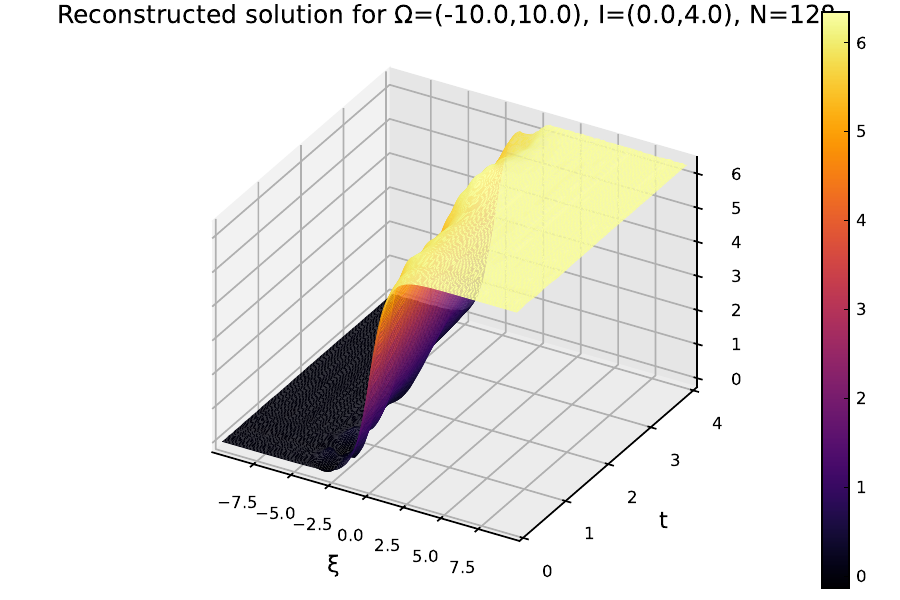}
    \caption{V$3$}
  \end{subfigure}

  \begin{subfigure}{0.45\linewidth}
    \centering
    \includegraphics[width=\linewidth, scale=.35]{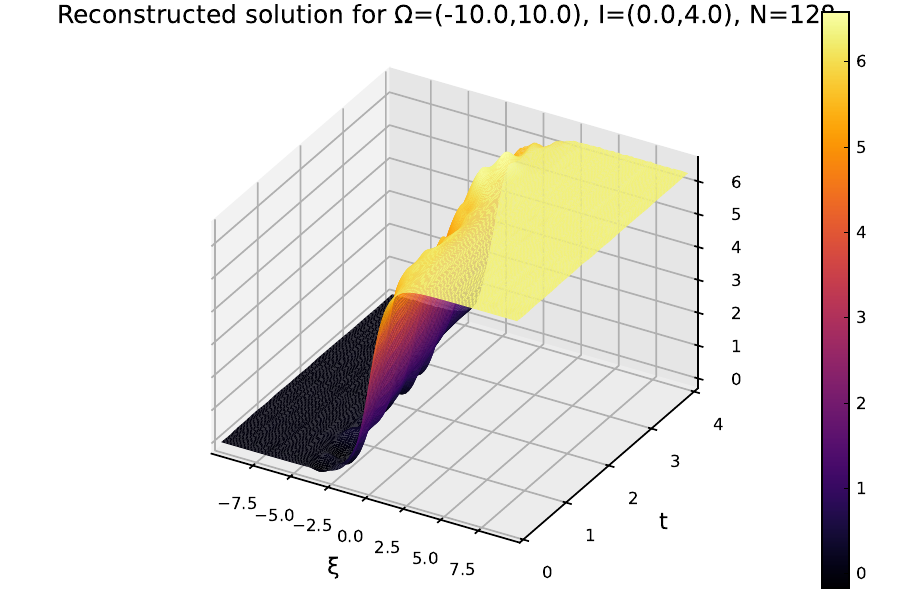}
    \caption{V$8$}
  \end{subfigure}
  \hfill
  \begin{subfigure}{0.45\linewidth}
    \centering
    \includegraphics[width=\linewidth, scale=.35]{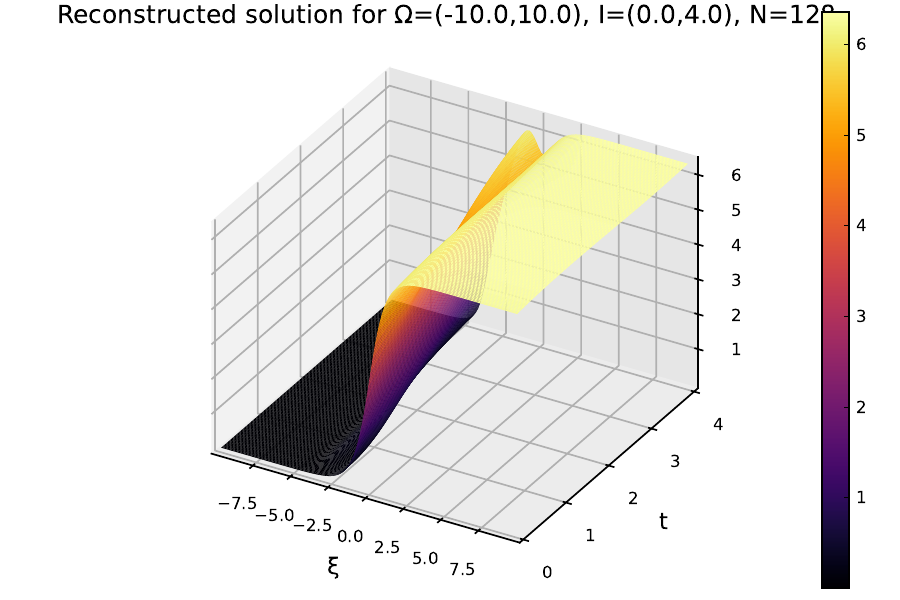}
    \caption{PSD}
  \end{subfigure}
  \caption{Comparison of the reconstruction of the $1$D single soliton solution from the integrated ROM solution in dimension $n=4$ to the full dimension $N=128$ for $\nu=-0.73$ using the autoencoder variants V$2$, V$3$, V$8$ ($50$ epochs) and the PSD method.}
  \label{reconstruction_comparison_single_soliton}
\end{figure}

\begin{figure}[ht!] % h = Here, t = Top, b = Bottom, p = same Page, ! = DO AS I SAY
\resizebox{\textwidth}{!}{%
  \centering
  \begin{subfigure}{0.43\textwidth}
    \centering
    \rotatebox{90}{\includegraphics[width=1.2\textwidth, height=.27\textheight]{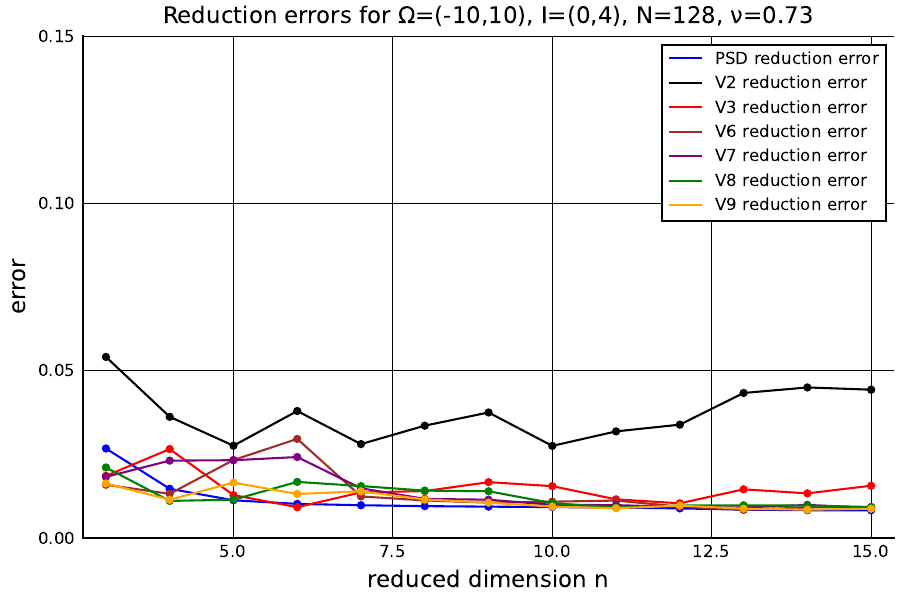}}
    \caption{$\nu=0.73$}
  \end{subfigure} 
 % \hfill
  \begin{subfigure}{0.43\textwidth}
    \centering
    \rotatebox{90}{\includegraphics[width=1.2\textwidth, height=.27\textheight]{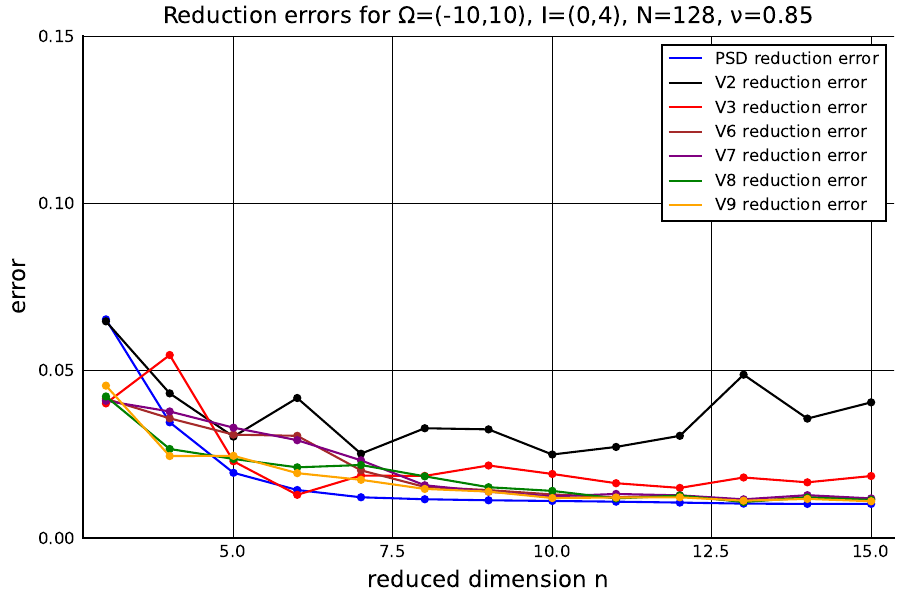}}
    \caption{$\nu=0.85$}
  \end{subfigure}
}  
\resizebox{\textwidth}{!}{%
  \begin{subfigure}{0.43\textwidth}
    \centering
    \rotatebox{90}{\includegraphics[width=1.2\textwidth, height=.27\textheight]{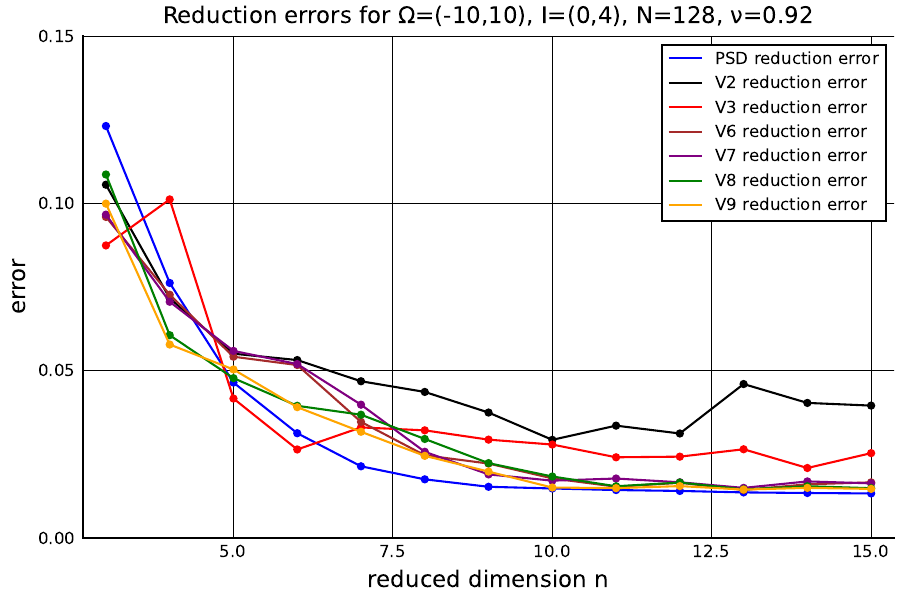}}
    \caption{$\nu=0.92$}
  \end{subfigure}
  %\hfill
  \begin{subfigure}{0.43\textwidth}
    \centering
    \rotatebox{90}{\includegraphics[width=1.2\textwidth, height=.27\textheight]{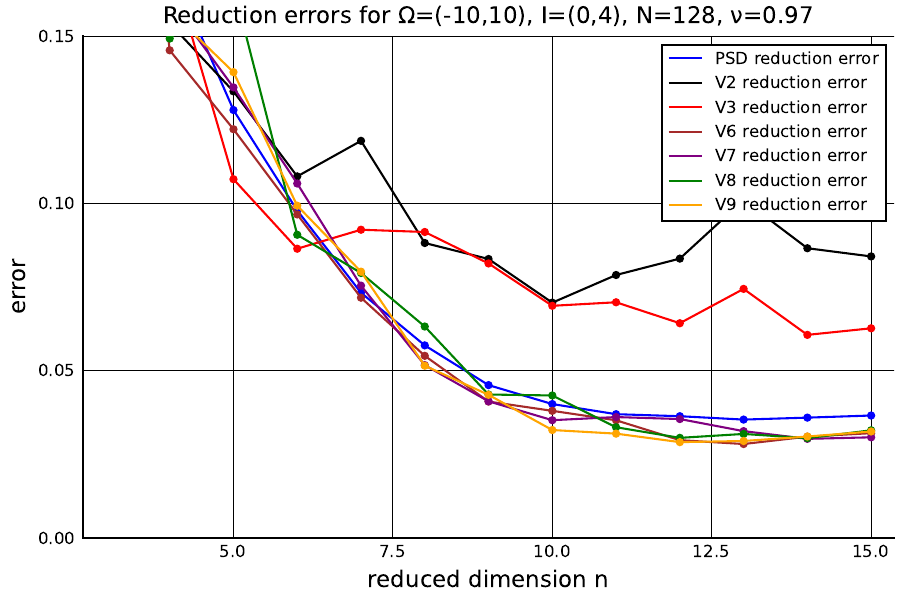}}
    \caption{$\nu=0.97$}
  \end{subfigure}
}
\caption{Reduction errors for the $1$D sine-Gordon equation with soliton-soliton doublets boundary conditions in dimensions $N=128$ and $n=3,4,...,15$ and $50$ epochs test runs.}
  \label{fig:1DsG_red_errors_N=128_50epochs_solitonsolitondoublets}
\end{figure}

\begin{figure}[ht!] % h = Here, t = Top, b = Bottom, p = same Page, ! = DO AS I SAY
\resizebox{\textwidth}{!}{%
  \centering
  \begin{subfigure}{0.43\textwidth}
    \centering
    \rotatebox{90}{\includegraphics[width=1.2\textwidth, height=.27\textheight]{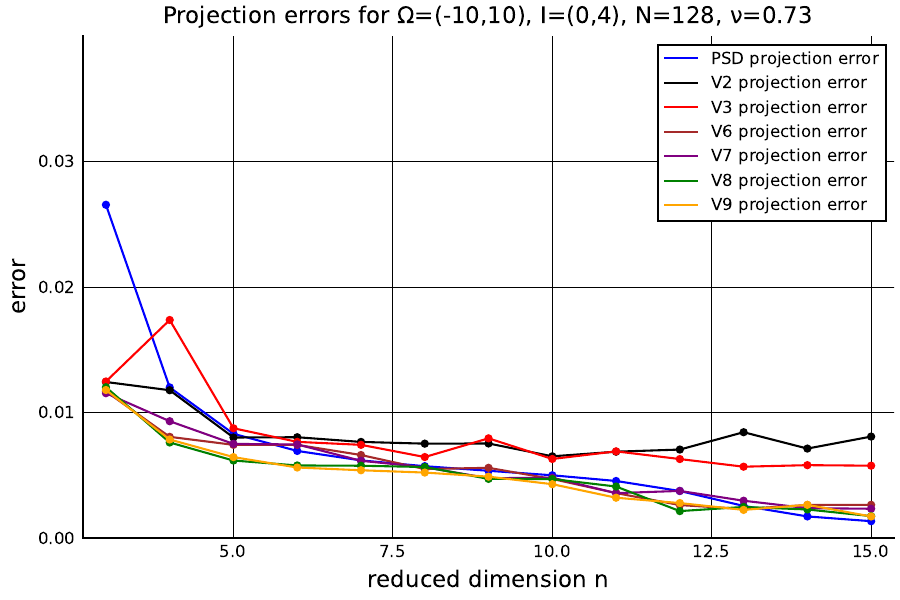}}
    \caption{$\nu=0.73$}
  \end{subfigure} 
 % \hfill
  \begin{subfigure}{0.43\textwidth}
    \centering
    \rotatebox{90}{\includegraphics[width=1.2\textwidth, height=.27\textheight]{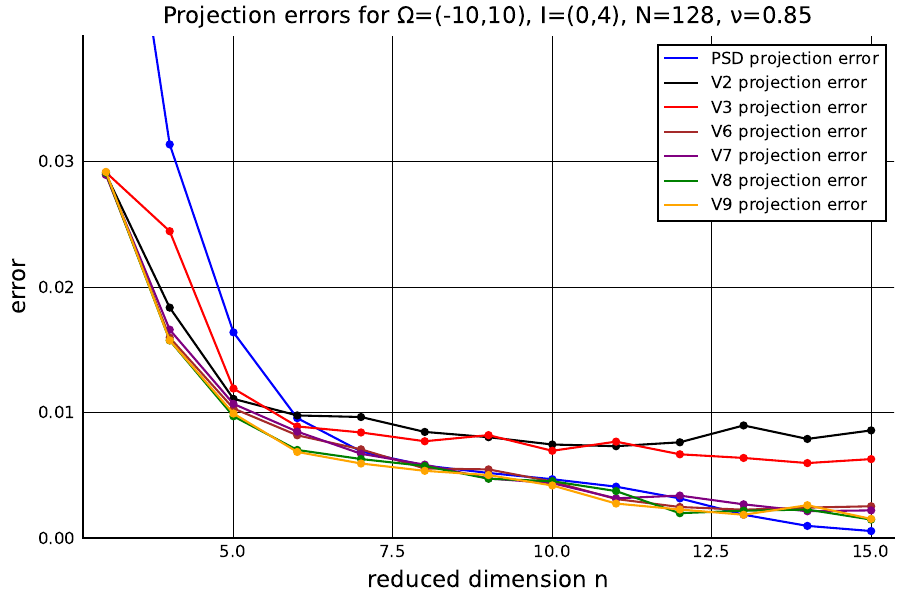}}
    \caption{$\nu=0.85$}
  \end{subfigure}
}  
\resizebox{\textwidth}{!}{%
  \begin{subfigure}{0.43\textwidth}
    \centering
    \rotatebox{90}{\includegraphics[width=1.2\textwidth, height=.27\textheight]{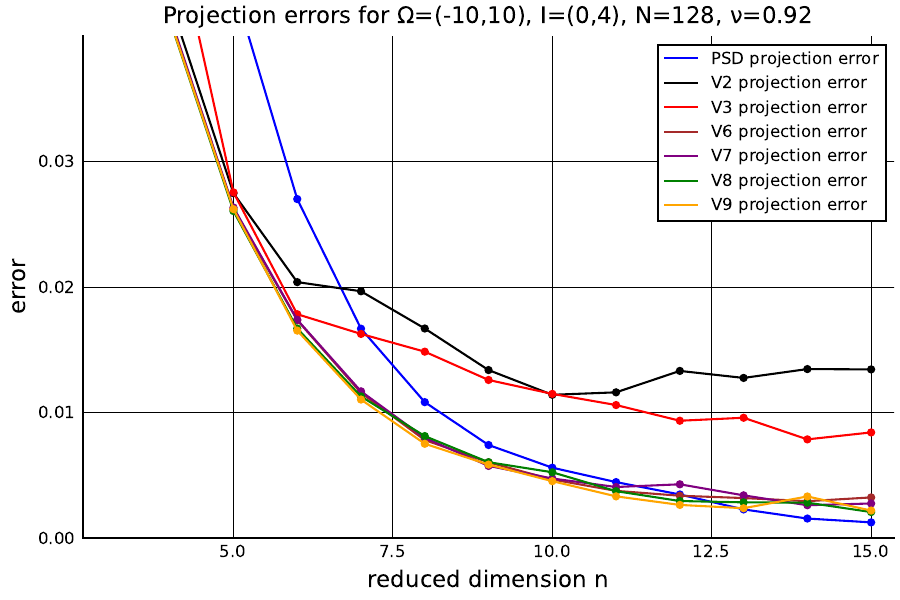}}
    \caption{$\nu=0.92$}
  \end{subfigure}
  %\hfill
  \begin{subfigure}{0.43\textwidth}
    \centering
    \rotatebox{90}{\includegraphics[width=1.2\textwidth, height=.27\textheight]{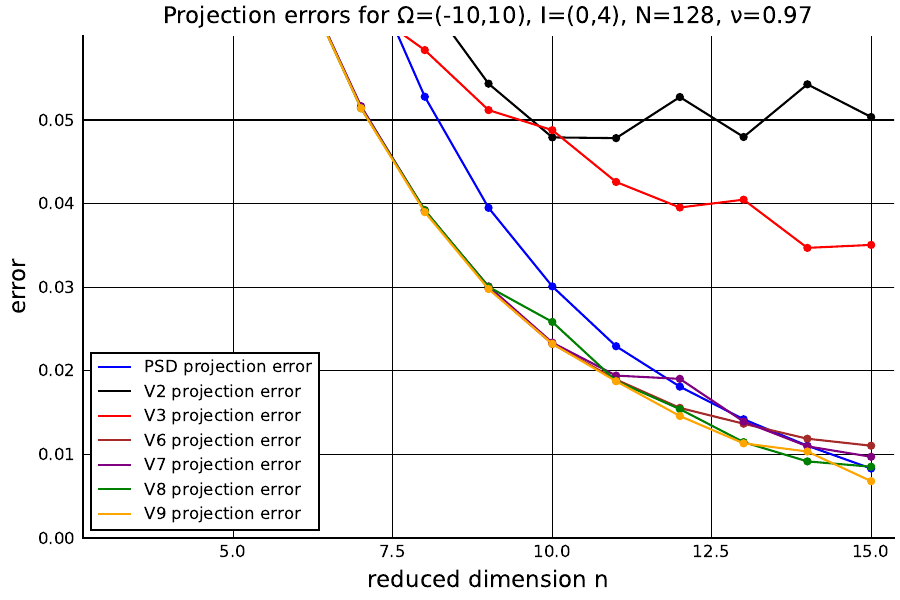}}
    \caption{$\nu=0.97$}
  \end{subfigure}
}
\caption{Projection errors for the $1$D sine-Gordon equation with soliton-soliton doublets boundary conditions in dimensions $N=128$ and $n=3,4,...,15$ and $50$ epochs test runs.}
  \label{fig:1DsG_proj_errors_N=128_50epochs_solitonsolitondoublets}
\end{figure}

\begin{figure}[ht!] % h = Here, t = Top, b = Bottom, p = same Page, ! = DO AS I SAY
  \begin{subfigure}{\textwidth}
    \resizebox{\textwidth}{!}{%
      \centering
        \includegraphics[width=0.05\textwidth]{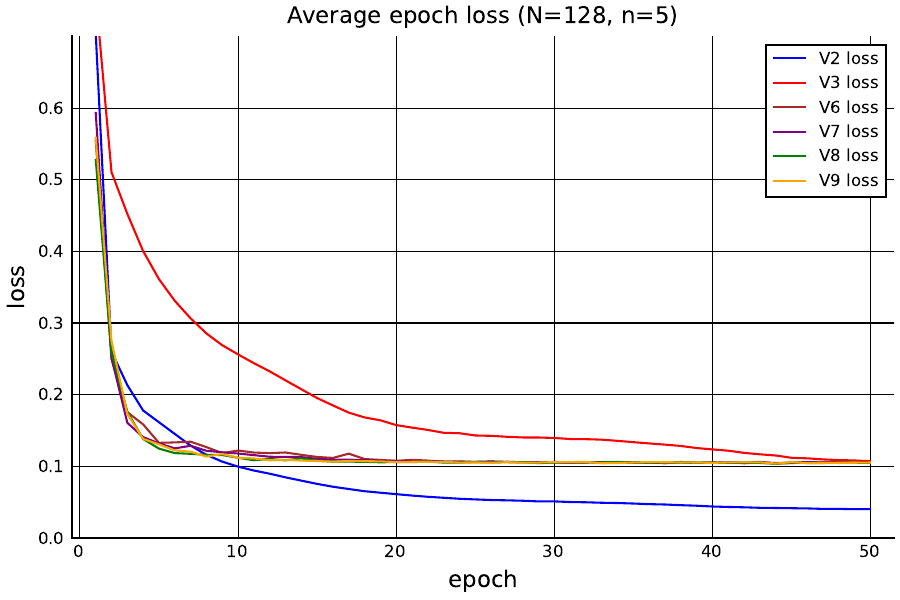}
    }
    \caption{$n=5$}
  \end{subfigure}
  %\hfill
  \begin{subfigure}{\textwidth}
    \resizebox{\textwidth}{!}{%
      \centering
        \includegraphics[width=0.05\textwidth]{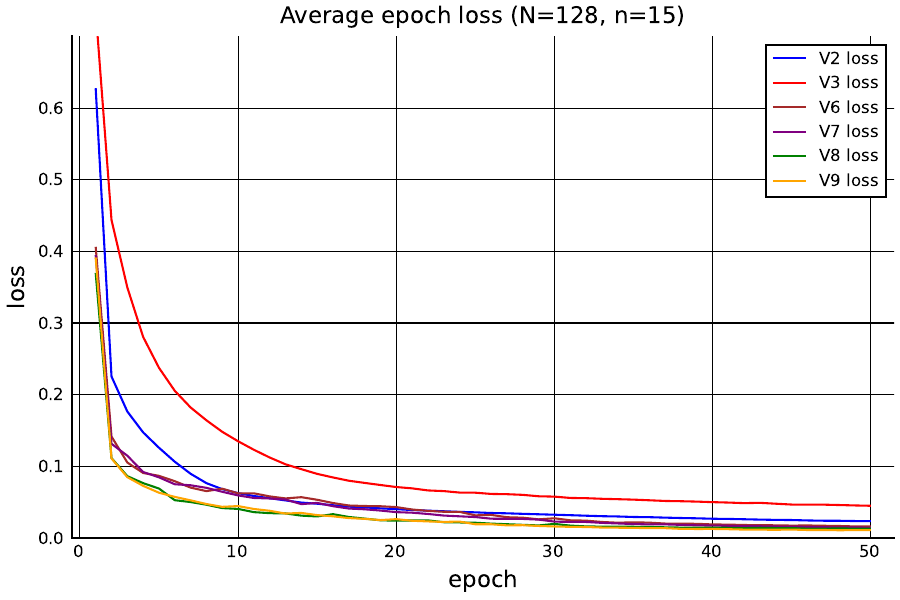}
    }
    \caption{$n=15$}
  \end{subfigure}
\caption{Comparison of $1$D sine-Gordon equation (soliton-soliton doublets) average epoch target function losses for variants V$2$, V$3$ and V$6$-V$9$ in dimension $N=128$ for $50$ epochs test runs.}
  \label{fig:1DsineGordon_epoch_losses_N=128_50epochs_solitonsolitiondoublets}
\end{figure}

\begin{figure}[ht!] % h = Here, t = Top, b = Bottom, p = same Page, ! = DO AS I SAY
  \centering
\begin{subfigure}{0.45\linewidth}
    \centering
    \includegraphics[width=\linewidth, scale=.35]{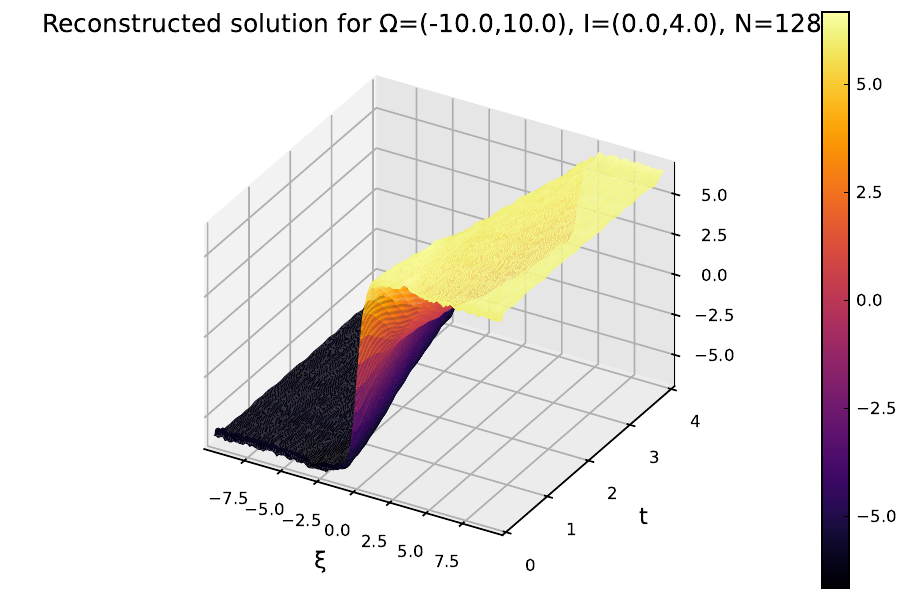}
    \caption{V$2$}
  \end{subfigure}
  \hfill
  \begin{subfigure}{0.45\linewidth}
    \centering
    \includegraphics[width=\linewidth, scale=.35]{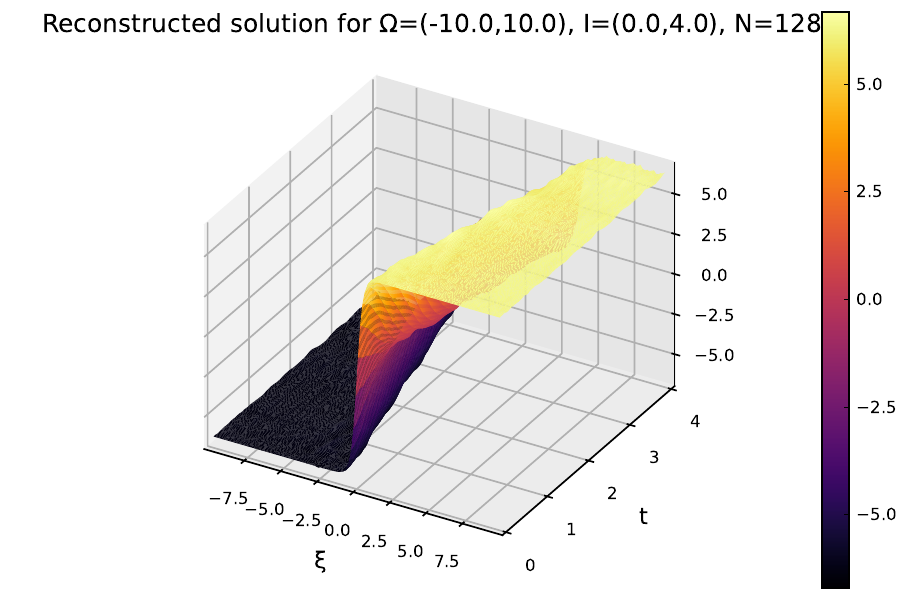}
    \caption{V$3$}
  \end{subfigure}

  \begin{subfigure}{0.45\linewidth}
    \centering
    \includegraphics[width=\linewidth, scale=.35]{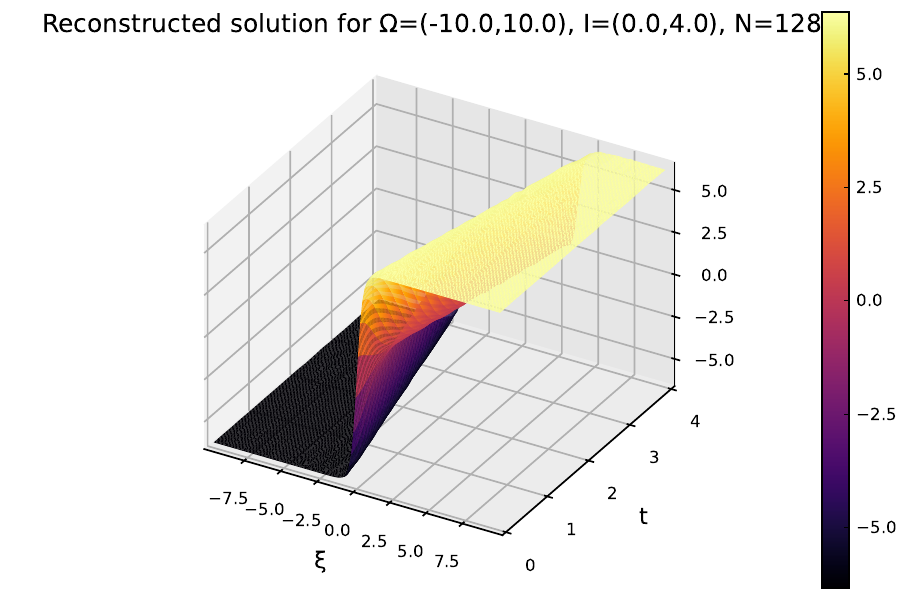}
    \caption{V$8$}
  \end{subfigure}
  \hfill
  \begin{subfigure}{0.45\linewidth}
    \centering
    \includegraphics[width=\linewidth, scale=.35]{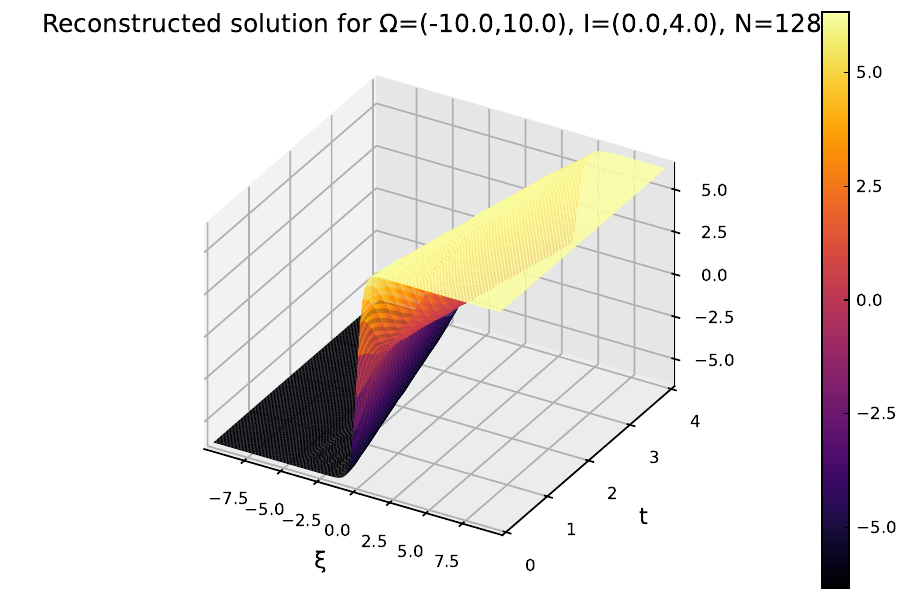}
    \caption{PSD}
  \end{subfigure}
  \caption{Comparison of the reconstruction of the $1$D soliton-soliton doublets solution from the integrated ROM solution in dimension $n=13$ to the full dimension $N=128$ for $\nu=0.97$ using the autoencoder variants V$2$, V$3$, V$8$ ($50$ epochs) and the PSD method.}
  \label{reconstruction_comparison_solitonsolitiondoublets}
\end{figure}

\clearpage

\section*{Declaration of Authorship}
%\addcontentsline{toc}{section}{Declaration of Authorship}   %%  Aufnahme ins Inhaltsverzeichnis

I hereby declare that the thesis submitted is my own unaided work. All direct or indirect
sources used are acknowledged as references.\\
\\
I am aware that the thesis in digital form can be examined for the use of unauthorized aid
and in order to determine whether the thesis as a~whole or parts incorporated in it may be
deemed as plagiarism. For the comparison of my work with existing sources I agree that it
shall be entered in a~database where it shall also remain after examination, to enable
comparison with future theses submitted. Further rights of reproduction and usage, however,
are not granted here.\\
\\
This thesis was not previously presented to another examination board and has not been
published.

\vspace{2cm}

\noindent
Augsburg, \underline{\hspace{4cm}} \hfill \underline{\hspace{4cm}}	\\
\hfill

\end{document}